\definecolor{wb}{RGB}{51,153,255}
\numberwithin{equation}{subsection}
\newcommand{\defeq}{\vcentcolon=}
\newcommand{\eqdef}{=\vcentcolon}
\def\moverlay{\mathpalette\mov@rlay}
\def\mov@rlay#1#2{\leavevmode\vtop{%
   \baselineskip\z@skip \lineskiplimit-\maxdimen
   \ialign{\hfil$\m@th#1##$\hfil\cr#2\crcr}}}
\newcommand{\charfusion}[3][\mathord]{
    #1{\ifx#1\mathop\vphantom{#2}\fi
        \mathpalette\mov@rlay{#2\cr#3}
      }
    \ifx#1\mathop\expandafter\displaylimits\fi}
\newcommand{\cupdot}{\charfusion[\mathbin]{\cup}{\cdot}}
\newcommand{\longhookrightarrow}{\lhook\joinrel\longrightarrow}
\newcommand{\bigperp}{%
  \mathop{\mathpalette\bigp@rp\relax}%
  \displaylimits
}
\newcommand{\bigp@rp}[2]{%
  \vcenter{
    \m@th\hbox{\scalebox{\ifx#1\displaystyle2.1\else1.5\fi}{$#1\perp$}}
  }%
}
\newtheoremstyle{definitions}
 	{\topsep}% measure of space to leave above the theorem. E.g.: 3pt
	{\topsep}% measure of space to leave below the theorem. E.g.: 3pt
	{}% name of font to use in the body of the theorem
	{}% measure of space to indent
	{\bfseries}% name of head font
	{:}% punctuation between head and body
	{.5em}% space after theorem head; " " = normal interword space
	{}
\newtheoremstyle{lemmata}
	{\topsep}% measure of space to leave above the theorem. E.g.: 3pt
	{\topsep}% measure of space to leave below the theorem. E.g.: 3pt
	{\itshape} %{\slshape}% name of font to use in the body of the theorem
	{}% measure of space to indent
	{\bfseries}% name of head font
	{:}% punctuation between head and body
	{.5em}% space after theorem head; " " = normal interword space
	{}
\theoremstyle{lemmata}
\newtheorem{Theorem}[subsection]{Theorem}
\newtheorem*{Theorem-nn}{Theorem}
\newtheorem{Lemma}[subsection]{Lemma}
\newtheorem{Corollary}[subsection]{Corollary}
\newtheorem{Corollary-Definition}[subsection]{Corollary/Definition}
\newtheorem{Proposition}[subsection]{Proposition}
\newenvironment{manualtheorem}[1]{%
  \manualtheoreminner
}{\endmanualtheoreminner}
\theoremstyle{definitions}
\newtheorem{Definition}[subsection]{Definition}
\newtheorem{Remark}[subsection]{Remark}
\newtheorem{Remarks}[subsection]{Remarks}
\newtheorem*{Remark-nn}{Remark}
\newtheorem*{Remarks-nn}{Remarks}
\newtheorem{Example}[subsection]{Example}
\DeclareMathOperator{\rk}{rk}
\DeclareMathOperator{\GL}{GL}
\DeclareMathOperator{\PGL}{PGL}
\DeclareMathOperator{\Quot}{Quot}
\DeclareMathOperator{\Gr}{Gr}
\DeclareMathOperator{\Characteristic}{char}
\DeclareMathOperator{\Spec}{Spec}
\DeclareMathOperator{\sgn}{sgn}
\DeclareMathOperator{\Disc}{Disc}
\DeclareMathOperator{\Maps}{Maps}
\newcommand{\ldot}{\mathrel{\;.\;}}
\begin{document}

\title[Distributive Properties]{Distributive properties of division points and discriminants of Drinfeld modules}
\author{Ernst-Ulrich Gekeler}
\address{FR 6.1 Mathematik, Universität des Saarlandes, Postfach 15 11 50 D-66041 Saarbrücken}
\email{gekeler@math.uni-sb.de}
\date{\today}
\subjclass{MSC 11G09, 11F52, 11R58}
\keywords{Drinfeld modules, Drinfeld modular forms, Distributions and derived distributions, Product formulas}

\begin{abstract}
	We present a new notion of distribution and derived distribution of rank \(r \in \mathds{N}\) for a global function field \(K\) with a distinguished place \(\infty\).
	It allows to describe the relations between division points, isogenies, and discriminants both for a fixed Drinfeld module of rank \(r\) for the above data, or for the
	corresponding modular forms.
	
	We introduce and study three basic distributions with values in \(\mathds{Q}\), in the group \(\mu(\overline{K})\) of roots of unity in the algebraic closure
	\(\overline{K}\) of \(K\), and in the group \(U^{(1)}(C_{\infty})\) of \(1\)-units of the completed algebraic closure \(C_{\infty}\) of \(K_{\infty}\), respectively.
	
	There result product formulas for division points and discriminants that encompass known results (e.g. analogues of Wallis' formula for \((2\pi \imath)^{2}\) in the
	rank-\(1\) case, of Jacobi's formula \(\Delta = (2\pi \imath)^{12} q \prod (1-q^{n})^{24}\) in the rank-\(2\) case, and similar boundary expansions for \(r > 2\))
	and several new ones: the definition of a canonical discriminant for the most general case of Drinfeld modules and the description of the sizes of division and
	discriminant forms.
	
	In the now classical case where \((K, \infty) = (\mathds{F}_{q}(T), \infty)\) and \(r = 1\), \(2\) or \(3\), we give explicit values for the logarithms of such forms.
\end{abstract} 

\maketitle

\section{Introduction and Notation} \label{Section.Introduction-and-Notation}

\subsection{} This paper originates in the author's attempt to find a common framework for work of Carlitz \cite{Carlitz35}, Hayes \cite{Hayes74}, \cite{Hayes85}, Galovich-Rosen \cite{GalovichRosen81}, and the author
\cite{Gekeler85}, \cite{Gekeler84}, \cite{Gekeler86} Ch. IV and VI about division points of Drinfeld modules of rank \(1\) and of higher rank. Since long time it appeared that the product formulas
for periods of Drinfeld modules of rank \(1\) given in \cite{Gekeler86} Ch. IV, for discriminants in the higher rank case (\cite{Gekeler86} Ch. VI, \cite{gekeler2023drinfeld}, see also \cite{Basson17}), and similar expressions
for the respective division points, should have a common source.

Ideally, one would hope for a representation of the discriminant as a (suitably regularized as in \cite{Gekeler86} Ch. IV) product over the elements of the period lattice,
and such that the division points are described by partial products.

That this is essentially true is the content of Theorem \ref{Theorem.Main-Theorem} along with its corollaries.

\subsection{} \label{Subsection.First-introduction-of-notation} %
Let us introduce some notation. Throughout, \(K\) is a global function field with \(\mathds{F}_{q}\) as its field of constants, where \(q\) is a power of the prime number
\(p\), and \enquote{\(\infty\)} is a fixed place of \(K\), of degree \(d_{\infty}\) over \(\mathds{F}_{q}\). We let \(A \subset K\) be the Dedekind ring of elements of \(K\)
regular away from \(\infty\). Equivalently, \(K\) is the function field of a smooth projective geometrically connected curve \(\mathcal{C}\) over \(\mathds{F}_{q}\), and
\(A = \mathcal{O}_{\mathcal{C}}(\mathcal{C} \smallsetminus \{\infty\})\). Such rings are called \textbf{Drinfeld coefficient rings} for short. As \(q\) is fixed throughout,
we omit it from notation and write \(\mathds{F}\) for \(\mathds{F}_{q}\). Let further \(K_{\infty}\) be the completion of \(K\) at \(\infty\), and \(\pi \in K\) a 
uniformizer at \(\infty\). We normalize the absolute value \(\lvert \ldot \rvert = \lvert \ldot \rvert_{\infty}\) by 
\begin{equation}\stepcounter{subsubsection}%
	\lvert \pi \rvert^{-1} = q_{\infty} \defeq q^{d_{\infty}},
\end{equation}
and let \(C_{\infty}\) be the completed algebraic closure of \(K_{\infty}\) with respect to \(\lvert \ldot \rvert\).

\subsubsection{}\stepcounter{equation}\label{Subsubsection.Simple-example-C-projective-line-infty-place-at-infinity}%
The most simple example is where \(\mathcal{C}\) is the projective line and \enquote{\(\infty\)} the usual place at infinity, so \(K = \mathds{F}(T)\) with an indeterminate
\(T\), \(A = \mathds{F}[T]\), and \(K_{\infty} = \mathds{F}((T))^{-1}\). For \(x \in C_{\infty}\), we let
\begin{equation}
	\log x = \log_{q} \lvert x \rvert \quad (x \neq 0) \qquad \text{and} \qquad \log 0 = {-}\infty.
\end{equation}
If \(0 \neq x \in A\), then \(\log x\) agrees with \(\deg x = \dim_{\mathds{F}} A/(x)\).

\subsection{} An \(A\)-\textbf{lattice in} \(C_{\infty}\) is a finitely generated (hence projective of some rank \(r\)) \(A\)-submodule \(\Lambda\) of \(C_{\infty}\) which
is \textbf{discrete} in the sense that it has finite intersection with each ball of finite diameter in \(C_{\infty}\).

Fix \(r \in \mathds{N} = \{1,2,3,\dots\}\) and put \(V \defeq K^{r}\), \(V_{\infty} \defeq K_{\infty}^{r}\). An \(A\)-\textbf{lattice in} \(V\) is a finitely generated
\(A\)-submodule \(Y\) of \(V\) of full rank \(r\). Let
\begin{equation}
	\Psi = \Psi^{r} \defeq \{ \boldsymbol{\omega} = (\omega_{1}, \dots, \omega_{r}) \in C_{\infty}^{r} \mid \omega_{1}, \dots, \omega_{r} \text{ \(K_{\infty}\)-linearly independent} \}
\end{equation}
be the \textbf{Drinfeld space} of dimension \(r\). To each \(\boldsymbol{\omega} \in \Psi\) there corresponds an embedding
\begin{equation}
	\begin{split}
		i_{\boldsymbol{\omega}} \colon V			&\longhookrightarrow C_{\infty}. \\
		\boldsymbol{v} = (v_{1}, \dots, v_{r})	&\longmapsto \boldsymbol{v} \boldsymbol{\omega} \defeq \sum_{1 \leq i \leq r} v_{i}\omega_{i}.
	\end{split}	
\end{equation}
It has the property: Given a lattice \(Y\) in \(V\) and \(\boldsymbol{\omega} \in \Psi\), the set 
\(\Lambda = Y_{\boldsymbol{\omega}} \defeq i_{\boldsymbol{\omega}}(Y)\) is an \(A\)-lattice in \(C_{\infty}\). Vice versa, given a lattice \(\Lambda \subset C_{\infty}\),
the choice of a \(K\)-basis of \(K\Lambda\) yields an \(\boldsymbol{\omega} \in \Psi\) and \(Y \subset V\) such that \(\Lambda = Y_{\boldsymbol{\omega}}\). We
note that
\begin{equation}
	\Omega = \Omega^{r} \defeq C_{\infty}^{\times} \backslash \Psi
\end{equation}
is the classical Drinfeld symmetric space as defined in \cite{Drinfeld74}.

\subsection{} To each \(\Lambda\) as above, we let
\begin{equation}
	\begin{split}
		e^{\Lambda} \colon C_{\infty}		&\longrightarrow C_{\infty} \\
							z				&\longmapsto z \sideset{}{^{\prime}} \prod_{\lambda \in \Lambda} (1 - z/\lambda)	
	\end{split}	
\end{equation}
be its exponential function. Here and in the sequel we use the 

\subsection*{Convention} \(\sideset{}{^{\prime}} \prod (\cdots)\) resp. \( \sideset{}{^{\prime}} \sum (\cdots) \) is the product resp. sum over the non-zero elements of the 
respective index set.

The main properties of \(e^{\Lambda}\) are easily established: The product converges and defines an \(\mathds{F}\)-linear map onto \(C_{\infty}\) with kernel \(\Lambda\).
Furthermore, for each \(0 \neq a \in A\), there exists a polynomial \(\phi_{a}^{\Lambda} \in C_{\infty}[X]\) of shape
\begin{equation}\label{Eq.Members-of-Drinfeld-A-module}
	\phi_{a}^{\Lambda}(X) = aX + \sum_{1 \leq i \leq r \deg a} {}_{a}\ell_{i} X^{q^{i}}
\end{equation}
such that the diagram
\begin{equation}\label{Eq.Commutative-diagram-Drinfeld-A-module}
	\begin{tikzcd}
		0 \ar[r]	& \Lambda \ar[d, "a"] \ar[r]	 	& C_{\infty} \ar[r, "e^{\Lambda}"] \ar[d, "a"]	& C_{\infty} \ar[r] \ar[d, "\phi_{a}^{\Lambda}"] 	& 0 \\
		0 \ar[r]	& \Lambda \ar[r]								& C_{\infty} \ar[r, "e^{\Lambda}"]									& C_{\infty} \ar[r]												& 0
	\end{tikzcd}
\end{equation}
is commutative. Its coefficients \({}_{a}\ell_{i} = {}_{a}\ell_{i}(\Lambda) = {}_{a}\ell_{i}^{Y}(\boldsymbol{\omega})\) if \(\Lambda = Y_{\boldsymbol{\omega}}\) depend on
\(\Lambda\) or on \(\boldsymbol{\omega} \in \Psi\), and the \textbf{discriminant}
\begin{equation}
	\Delta_{a}(\Lambda) = \Delta_{a}^{Y}(\boldsymbol{\omega}) \defeq {}_{a} \ell_{r \deg a}(\Lambda)
\end{equation}
doesn't vanish. It is well-known that the collection \(\{ \phi_{a}^{\Lambda} \mid a \in A \}\) (where \(\phi_{0}^{\Lambda} = 0\)) defines a Drinfeld \(A\)-module 
\(\phi^{\Lambda}\) over \(C_{\infty}\), which establishes a bijective correspondence (see, e.g., \cite{Drinfeld74}, \cite{Goss96}, \cite{Rosen02})
\begin{equation}
	\{ \text{\(A\)-lattices in \(C_{\infty}\) of rank \(r\)} \} \longleftrightarrow \{ \text{Drinfeld \(A\)-modules over \(C_{\infty}\) of rank \(r\)} \}.	
\end{equation}
We note that for \(0 \neq c \in C_{\infty}\)
\begin{equation}
	{}_{a}\ell_{i}(c \Lambda) = c^{1-q^{i}} {}_{a}\ell_{i}(\Lambda)
\end{equation}
holds. In fact, the \({}_{a}\ell_{i}\) are modular forms for the group \(\Gamma_{Y} = \GL(Y)\) of weight \(q^{i} - 1\) and type \(0\) (\cite{BassonBreuerPink-tA}, \cite{gekeler2023drinfeld}).

\subsection{}\label{Subsection.Assumptions-on-lattice-and-fixed-element}%
Assume that \(\Lambda = Y_{\boldsymbol{\omega}}\) and \(0 \neq a \in A\) as before. By \eqref{Eq.Commutative-diagram-Drinfeld-A-module} and the properties of 
\(e^{\Lambda}\), the
\begin{equation}
	d_{\boldsymbol{u}}^{Y}(\boldsymbol{\omega}) \defeq e^{Y_{\boldsymbol{\omega}}}(\mathbf{u} \boldsymbol{\omega})
\end{equation}
with \(\mathbf{u} \in a^{-1}Y \smallsetminus Y\) are the non-trivial zeroes of \(\phi_{a}^{\Lambda}(X)\), i.e., the \(a\)-\textbf{division points} of \(\phi^{\Lambda}\), and
depend only on the class of \(\mathbf{u}\) in \(a^{-1}Y/Y\). Hence we may write
\begin{equation}
	\phi_{a}^{Y_{\boldsymbol{\omega}}}(X) = aX \sideset{}{^{\prime}} \prod_{\mathbf{u} \in a^{-1}Y/Y} (1 - d_{\boldsymbol{u}}^{Y}(\boldsymbol{\omega})^{-1}X).
\end{equation}
In particular,
\begin{equation}\label{Eq.Discriminant-as-function-of-bold-omega}
	\Delta_{a}^{Y} = a \Big( \sideset{}{^{\prime}} \prod_{\mathbf{u} \in a^{-1}Y/Y} d_{\mathbf{u}}^{Y} \Big)^{-1} = a \Delta_{(a)}^{Y}
\end{equation}
as functions of \(\boldsymbol{\omega}\). That is, \(\Delta_{(a)}^{Y}\) is \(\Delta_{a}^{Y}\) deprived of the factor \(a\).

\subsection{}\label{Subsection.Lattice-rank}%
Given two lattices \(\Lambda \subset \Lambda'\) of rank \(r\) in \(C_{\infty}\) (we call these a \textbf{lattice pair} of rank \(r\) in \(C_{\infty}\), and also write
\(\Lambda' | \Lambda\) for this data), the index \([\Lambda' : \Lambda]\) is finite, say, \([\Lambda' : \Lambda] = q^{d}\). In analogy with 
\eqref{Eq.Members-of-Drinfeld-A-module} and \eqref{Eq.Commutative-diagram-Drinfeld-A-module}, there exists an \(\mathds{F}\)-linear polynomial \(\varphi\) (that is,
a polynomial where only \(1\), \(q\), \(q^{2}\), \dots appear as exponents) with linear coefficient 1 (i.e., its derivative \(\varphi'\) equals \(1\))
and of degree \(q^{d}\) such that
\begin{equation}
	e^{\Lambda'} = \varphi \circ e^{\Lambda}.
\end{equation}
If \(\phi = \phi^{\Lambda}\) and \(\phi' = \phi^{\Lambda'}\) are the associated Drinfeld modules, then \(\varphi\) describes an isogeny from \(\phi\) to \(\phi'\). More
generally, if \(\phi\), \(\phi'\) are arbitrary Drinfeld \(A\)-modules of rank \(r\), where \(\phi = \phi^{\Lambda}\), \(\phi' = \phi^{\Lambda'}\), and
\(\varphi \colon \phi \to \phi'\) is an isogeny with \(\varphi' = c \in C_{\infty}\), then \(c \neq 0\), \(\Lambda \subset c^{-1}\Lambda'\), and replacing
\(\Lambda'\) with \(c^{-1}\Lambda'\) yields a lattice pair \(\Lambda' | \Lambda = Y_{\boldsymbol{\omega}}' | Y_{\boldsymbol{\omega}}\), which means that \(\varphi' = 1\).
Hence the collection of lattice pairs \(Y'|Y\) in \(V\) controls the collection of all possible isogenies of Drinfeld \(A\)-modules of rank \(r\). In the above situation,
we define
\begin{equation}\label{Eq.Discriminant-of-Phi}
	\Delta(\Lambda' | \Lambda) = \Delta^{Y'|Y}(\boldsymbol{\omega}) \defeq \text{the leading coefficient of \(\varphi\)},
\end{equation}
and call it the \textbf{discriminant of} \(\varphi\), or \textbf{of} \(\Lambda' | \Lambda\). Hence \(\Delta_{(a)}\) as defined in 
\eqref{Eq.Discriminant-as-function-of-bold-omega} equals \(\Delta(a^{-1}\Lambda | \Lambda) = \Delta^{a^{-1}Y|Y}(\boldsymbol{\omega})\). Generalizing 
\eqref{Eq.Discriminant-as-function-of-bold-omega}, we have
\begin{equation}
	\Delta^{Y'|Y}(\boldsymbol{\omega}) = \Big( \sideset{}{^{\prime}} \prod_{\mathbf{u} \in Y'/Y} d_{\mathbf{u}}^{Y}(\boldsymbol{\omega}) \Big)^{-1}.
\end{equation}

\subsection{} The family of all \(d_{\mathbf{u}}^{Y}\) and \(\Delta^{Y'|Y}\), where \(\mathbf{u}\) runs through \(V = K^{r}\) and \(Y'|Y\) through the set of lattice
pairs in \(V\), has some formal properties which are close to defining a distribution on the set
\begin{equation} \label{Eq.Definition-of-frak-G}
	\mathfrak{Y} = \{ (\mathbf{u}, Y) \mid \mathbf{u} \in V, Y \text{ an \(A\)-lattice in } V\}
\end{equation}
(see Section \ref{Section.Distributions-and-Derived-Distributions} for definitions). Our Theorem \ref{Theorem.Main-Theorem} along with its corollaries may be stated in
simplified form as follows.

\begin{manualtheorem}{Main Theorem}\begin{enumerate}[label=\(\mathrm{(\roman*)}\)]
	\item Given a lattice \(\Lambda\) of rank \(r\) in \(C_{\infty}\), there exists a canonical discriminant \(\Delta(\Lambda)\) such that for each \(0 \neq a \in A\),
	\begin{equation}
		\Delta_{a}(\Lambda) = \sgn(a) \Delta(\Lambda)^{(q^{r \deg a} - 1)/w_{r}}
	\end{equation}
	holds. Here \(w_{r} = q_{\infty}^{r} - 1\), and \(\sgn(a)\) is a \((q_{\infty} - 1)\)-th root of unity (see \ref{Subsection.Choices-for-1-units}). If 
	\(\Lambda = Y_{\boldsymbol{\omega}}\) with \(\boldsymbol{\omega}\) varying through \(\Psi\), then \(\Delta^{Y}(\boldsymbol{\omega}) = \Delta(Y_{\boldsymbol{\omega}})\)
	defines a modular form of weight \(w_{r}\) and type \(0\) for \(\Gamma_{Y} = \GL(Y)\).
	\item Let \(\mathcal{O}(\Psi)^{*}\) be the multiplicative group of invertible holomorphic functions on the Drinfeld space \(\Psi = \Psi^{r}\). Then
	\begin{equation}
		\begin{split}
			F \colon \mathfrak{Y}		&\longrightarrow \mathcal{O}(\Psi)^{*} \\
					(\boldsymbol{u},Y)	&\longmapsto \begin{cases} (d_{\mathbf{u}}^{Y})^{w_{r}} \Delta^{Y}	& (\mathbf{u} \notin Y) \\ \Delta^{Y}	&(\mathbf{u} \in Y) \end{cases}
		\end{split}
	\end{equation}
	is a distribution on \(\mathfrak{Y}\).
	\item Both sorts of functions, \(d_{\mathbf{u}}^{Y}(\boldsymbol{\omega})\) and \(\Delta^{Y}(\boldsymbol{\omega})\), may be evaluated through simple product formulas
	in terms of \(Y\), \(\mathbf{u}\), and \(\boldsymbol{\omega}\) (see Theorem \ref{Theorem.Main-Theorem}).
\end{enumerate}
\end{manualtheorem}

As a by-product, we get the sizes \(\lvert \Delta(\Lambda' | \Lambda) \rvert\), \(\lvert \Delta(\Lambda) \rvert\), \(\lvert d_{\mathbf{u}}^{Y}(\boldsymbol{\omega})\rvert\)
of the involved functions (see Corollary \ref{Corollary.Absolute-values-of-discriminants} and Section \ref{Section.The-case-A-IF-T}).

\subsection{} We start in Section \ref{Section.Distributions-and-Derived-Distributions} with defining the domain \(\mathfrak{Y}\) and the notions of distributions and derived
distributions on it. In Section \ref{Section.Basic-properties-of-the-functions}, the basic relations between the functions \(d_{\mathbf{u}}^{Y}\) and \(\Delta^{Y'|Y}\)
are investigated. These are such that they form a derived distribution on \(\mathfrak{Y}\) with values in the group \(\mathcal{O}(\Psi)^{*}\), in fact, with invertible
modular forms as values. As a consequence of the relations, we describe in Theorem \ref{Theorem.Action-of-Hecke-operator-on-Delta-form-for-lattice-pair} the multiplicative
action of Hecke correspondences on the discriminant forms \(\Delta^{Y'|Y}\). This is analogous with (but in view of the unbounded rank \(r\) more general than) the
results of Gilles Robert \cite{Robert90}, \cite{Robert90-1} about the classical elliptic discriminant.

In view of the decomposition of the multiplicative group (see \eqref{Eq.Decomposition-of-C-infty-star})
\[
	C_{\infty}^{*} = \pi^{\mathds{Q}} \times \mu(C_{\infty}) \times U^{(1)}(C_{\infty}),
\]
where \(\mu(C_{\infty})\) and \(U^{(1)}(C_{\infty})\) are the roots of unity and the \(1\)-units in \(C_{\infty}\), respectively, we next study certain distributions
with values in \(\mathds{Q}\), in \(U^{(1)}(C_{\infty})\), and in \(\mu(C_{\infty})\), in Sections \ref{Section.Z-functions-and-lattices}, \ref{Section.1-units}, and
\ref{Section.Roots-of-unity}. The \(\mu(C_{\infty})\)-valued distributions, treated in Section \ref{Section.Roots-of-unity}, behave differently and are delicate to handle.
For the \(\mathds{Q}\)-valued distributions, we introduce the \(Z\)-function of a lattice \(\Lambda\), an invariant analogous with the zeta function of a variety over a finite
field.

This enables us to show in Section \ref{Section.Product-expansions} our main result Theorem \ref{Theorem.Main-Theorem} (essentially item (iii) of the Main Theorem announced
above) and some of its consequences, among which the definition of the canonical discriminant \(\Delta\). We should note here that such a discriminant has been defined
in \cite{BassonBreuerPink-tA} and \cite{gekeler2023drinfeld}, but in each case only up to roots of unity. Here, due to the analysis in Section \ref{Section.Roots-of-unity}, we get the precise value of that root
of unity, and thereby the \textbf{canonical} \(\Delta(\Lambda) = \Delta^{Y}(\boldsymbol{\omega})\).

We conclude in Section \ref{Section.The-case-A-IF-T} with an application to the situation of \ref{Subsubsection.Simple-example-C-projective-line-infty-place-at-infinity},
where \(A = \mathds{F}[T]\). This connects the present results with existing ones (cases \(r=1\) and \(2\)), and produces new ones about sizes of modular forms (\(r = 2\) 
and \(3\)).

\subsection*{Notation}

\begin{itemize}
	\item \(\mathds{F} = \mathds{F}_{q}\) the finite field with \(q\) elements, of characteristic \(p\)
	\item \(K\) a global function field with \(\mathds{F}\) as field of constants
	\item \(\infty\) a fixed place of \(K\), of degree \(d_{\infty}\) over \(\mathds{F}\)
	\item \(A\) the Dedekind subring of \(K\) of elements regular off \(\infty\)
	\item \(K_{\infty}\) the completion of \(K\) at \(\infty\), with a uniformizer \(\pi \in K\)
	\item \(C_{\infty}\) the completed algebraic closure of \(K_{\infty}\)
	\item \(\lvert \ldot \rvert\) the absolute value on \(C_{\infty}\) with value group \(q^{\mathds{Q}}\), normalized by 
	\(\lvert \pi \rvert^{-1} = q_{\infty} = q^{d_{\infty}}\)
	\item \(\log x = \log_{q} \lvert x \rvert\) (\(0 \neq x \in C_{\infty}\)), \(\log 0 = {-}\infty\)
	\item \(w \defeq q_{\infty} - 1\), \(w_{r} \defeq q_{\infty}^{r} - 1\)
	\item \(r \in \mathds{N} = \{1,2,3,\dots \}\) a fixed natural number, the \textbf{rank} of our situation (usually omitted from notation)
	\item \(V = K^{r}\), \(V_{\infty} = K_{\infty}^{r}\), \(Y\), \(Y'\) \(A\)-lattices in \(V\)
	\item \(\mathfrak{Y} = \{ (\mathbf{u}, Y) \mid \mathbf{u} \in V, Y \subseteq V \text{ a lattice} \}\)
	\item \(\Psi = \Psi^{r}\), \(\Omega = C_{\infty}^{*} \backslash \Psi\) the Drinfeld spaces with groups \(\mathcal{O}(\Psi)^{*}\) resp. \(\mathcal{O}(\Omega)^{*}\) of
	invertible holomorphic functions
	\item \(\Lambda\), \(\Lambda'\) \(A\)-lattices of rank \(r\) in \(C_{\infty}\)
	\item \(\Lambda_{N} = \{ \lambda \in \Lambda \mid \log \lambda \leq N \}\), \(\Lambda_{N,N'} = \Lambda_{N'} \smallsetminus \Lambda_{N}\) (\(N \leq N'\))
	\item \(e^{\Lambda}, \phi^{\Lambda}\) exponential function and Drinfeld \(A\)-module associated with \(\Lambda\)
	\item \(d_{\mathbf{u}}^{Y}(\boldsymbol{\omega})\) division point/function of \(\phi^{\Lambda}\), where \(\Lambda = Y_{\boldsymbol{\omega}}\) with 
	\(\boldsymbol{\omega} \in \Psi\)
	\item \(\Delta_{a}^{Y}\), \(\Delta_{(a)}^{Y}\), \(\Delta_{\mathfrak{n}}^{Y}\), \(\Delta^{Y'|Y}\) various discriminants/discriminant functions
\end{itemize}

\section{Distributions and Derived Distributions on \(\mathfrak{Y}\)}\label{Section.Distributions-and-Derived-Distributions}

For some background and motivation on distributions in the number field case, see \cite{Kubert81} Ch. I. Throughout, \(A\) is a fixed Drinfeld coefficient ring as described in
\ref{Subsection.First-introduction-of-notation}, and \(r\) a fixed natural number.

\begin{Definition}
	We let \(\mathfrak{Y} = \mathfrak{Y}^{r}\) be the set of pairs \((\mathbf{u}, Y)\) as in \eqref{Eq.Definition-of-frak-G}, where \(\mathbf{u} \in V = K^{r}\) and
	\(Y \subset V\) is an \(A\)-lattice. We call \(\mathfrak{Y}\) the \textbf{distribution domain} for the fixed data \(A\) and \(r\). Given an additively written
	abelian group \(M\), an \(M\)-valued distribution on \(\mathfrak{Y}\) is a function \(f \colon \mathfrak{Y} \to M\) subject to
	\begin{equation}\label{Eq.Function-f-depends-only-on-n-modulo-Y}
		f(\mathbf{u}, Y) \text{ depends only on the class of \(\mathbf{u}\) modulo \(Y\)};
	\end{equation}	
	if \(Y'|Y\) is a lattice pair in \(V\) and \(\mathbf{v} \in V\), then
	\begin{equation}\label{Eq.Evaluation-of-f-for-sublattice-Y'-and-Y}
		\sum_{\substack{\mathbf{u} \in V/Y \\ \mathbf{u} \equiv \mathbf{v}~(\mathrm{mod } Y')}} f(\mathbf{u}, Y) = f(\mathbf{v}, Y').
	\end{equation}
\end{Definition}

\subsection{} The prototype of a distribution on \(\mathfrak{Y}\) comes out as follows. Assume that \(m \colon V \to M\) is a function such that for each 
\((\mathbf{u}, Y) \in \mathfrak{Y}\), the infinite sum
\begin{equation}\label{Eq.Convergence-of-series-expansion-of-f}
	f(\mathbf{u}, Y) \defeq \sum_{\substack{\mathbf{x} \in V \\ \mathbf{x} \equiv \mathbf{u} ~(\mathrm{mod } Y)}} m(\mathbf{x})
\end{equation}
converges (which requires a suitable topological structure on \(M\)). Then \eqref{Eq.Function-f-depends-only-on-n-modulo-Y} and 
\eqref{Eq.Evaluation-of-f-for-sublattice-Y'-and-Y} are trivially fulfilled, and \(f\) defines a distribution. This remains true if the sum in 
\eqref{Eq.Convergence-of-series-expansion-of-f} is replaced by \(\sideset{}{^{\prime}} \sum\).

\subsection{} More specifically, for each \(\boldsymbol{\omega} \in \Psi\), \((\mathbf{u}, Y)\in \mathfrak{Y}\), and \(k \in \mathds{N}\), define
\begin{equation}
	E_{k, \mathbf{u}}^{Y}(\boldsymbol{\omega}) \defeq \sideset{}{^{\prime}} \sum_{\substack{\mathbf{x} \in V \\ \mathbf{x} \equiv \mathbf{u} ~(\mathrm{mod } Y)}} (\mathbf{x}\boldsymbol{\omega})^{{-}k},
\end{equation}
where \(\mathbf{x}\boldsymbol{\omega} = \sum_{1 \leq i \leq r} x_{i}\omega_{i} \in C_{\infty}\). The sum converges and defines a holomorphic function \(E_{k, \mathbf{u}}^{Y}\)
on \(\Psi\), the \textbf{Eisenstein series} of weight \(k\) and shape \((\mathbf{u}, Y)\). In fact, \(E_{k, \mathbf{u}}^{Y}\) is a modular form of weight \(k\) for
a suitable congruence subgroup of \(\Gamma_{Y} = \GL(Y)\). For details, see e.g. \cite{Goss80}, \cite{BassonBreuerPink-tA}, or \cite{gekeler2023drinfeld}. Therefore, 
the system \(\{ E_{k, \mathbf{u}}^{Y}\}\) with \(k\) fixed describes a distribution with values in the \(C_{\infty}\)-algebra of modular forms.

\subsection{} Let \(f \colon \mathfrak{Y} \to M\) be a distribution. Its \textbf{derivative} \(Df = g\) is the function \(g \colon \mathfrak{Y} \to M\) defined by
\begin{equation}
	g(\mathbf{u}, Y) \defeq f(\mathbf{u}, Y) - f(0, Y).
\end{equation}
Since \(g \equiv 0\) on \(\mathfrak{Y} \smallsetminus \mathfrak{Y}^{*}\), we regard it as a function on 
\begin{equation}
	\mathfrak{Y}^{*} \defeq \{ (\mathbf{u}, Y) \in \mathfrak{Y} \mid \mathbf{u} \notin Y \}.
\end{equation}
If \(Y'|Y\) is a lattice pair in \(V\), we define the \textbf{discriminant} \(\Disc(g)^{Y'|Y}\) \textbf{of} \(Y'|Y\) \textbf{with respect to} \(g\) as
\begin{equation}\label{Eq.Definition-of-discriminant-of-lattice-pair-wrt-function}
	\Disc(g)^{Y'|Y} \defeq {-} \sideset{}{^{\prime}}\sum_{\mathbf{u} \in Y'/Y} g(\mathbf{u}, Y). \qquad \text{(Note the minus sign!)}
\end{equation}
By direct calculation, we find for \(\mathbf{v} \in V\):
\begin{equation}\label{Eq.Expansion-discriminant-of-lattice-pair}
	\sum_{\substack{\mathbf{u} \in V/Y \\ \mathbf{u} \equiv \mathbf{v} ~(\mathrm{mod } Y')}} g(\mathbf{u}, Y) + \Disc(g)^{Y'|Y} = g(\mathbf{v}, Y'),
\end{equation}
and for a tower \(Y''|Y'|Y\):
\begin{equation}\label{Eq.Discriminant-for-lattice-tower}
	\Disc(g)^{Y''|Y} = \Disc(g)^{Y''|Y} + [Y'':Y'] \Disc(g)^{Y'|Y}.
\end{equation}
Moreover,
\begin{equation}
	\Disc(g)^{Y'|Y} = [Y':Y] f(0,Y) - f(0, Y').
\end{equation}
Note that the definition \eqref{Eq.Definition-of-discriminant-of-lattice-pair-wrt-function} of \(\Disc(g)\) as well as \eqref{Eq.Expansion-discriminant-of-lattice-pair}
and \eqref{Eq.Discriminant-for-lattice-tower} involve only \(g\) but not \(f\), and \eqref{Eq.Discriminant-for-lattice-tower} is a consequence of 
\eqref{Eq.Expansion-discriminant-of-lattice-pair} and \eqref{Eq.Function-f-depends-only-on-n-modulo-Y} for \(g\) without reference to \(f\).

\begin{Definition}
	A \textbf{derived distribution} on \(\mathfrak{Y}\) with values in \(M\) is a function \(g \colon \mathfrak{Y}^{*} \to M\) subject to 
	\eqref{Eq.Function-f-depends-only-on-n-modulo-Y}	, i.e., \(g(\mathbf{u},Y)\) depends only on the class of \(\mathbf{u}\) modulo \(Y\), and
	\eqref{Eq.Expansion-discriminant-of-lattice-pair}, where \(\Disc(g)^{Y'|Y}\) is defined in \eqref{Eq.Definition-of-discriminant-of-lattice-pair-wrt-function}
	(and thus to \eqref{Eq.Discriminant-for-lattice-tower}, by the remark above).
\end{Definition}

It is a major problem to decide whether a given derived distribution \(g\) comes in fact as the derivative of a distribution \(f\) (a \textbf{primitive} of \(g\)) as above.
We note the trivial observation:

\begin{Lemma}
	\begin{enumerate}[label=\(\mathrm{(\roman*)}\)]
		\item Both the sets of distributions \(f\) and of derived distributions \(g\) are abelian groups, and \(f \mapsto Df = g\) is a homomorphism.
		\item If \(M\) has no \(p\)-torsion then a primitive \(f\) of \(g\) is unique up to adding a constant distribution. Here a distribution \(f\) is \textbf{constant}
		if \(f(\mathbf{u}, Y) = f(0, Y)\) and \(f(0, Y') = [Y':Y] f(0,Y)\) for all \(\mathbf{u} \in V\) and lattice pairs \(Y'|Y\).
	\end{enumerate}	
\end{Lemma}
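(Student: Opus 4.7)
The lemma splits into two essentially independent pieces. Part (i) is a linearity check, while part (ii) identifies the kernel of the derivative map $D$.

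For (i), the defining conditions \eqref{Eq.Function-f-depends-only-on-n-modulo-Y}, \eqref{Eq.Evaluation-of-f-for-sublattice-Y'-and-Y} for a distribution, and the conditions \eqref{Eq.Function-f-depends-only-on-n-modulo-Y}, \eqref{Eq.Expansion-discriminant-of-lattice-pair} for a derived distribution, are all $\mathds{Z}$-linear in the values of the function, so pointwise sums and negatives preserve both classes and give the claimed abelian-group structures. The pointwise formula $Df(\mathbf{u},Y) = f(\mathbf{u},Y) - f(0,Y)$ is manifestly additive in $f$, so $D$ will be a homomorphism once I verify that $Df$ really is a derived distribution: the class-mod-$Y$ condition is inherited from $f$, and \eqref{Eq.Expansion-discriminant-of-lattice-pair} for $g = Df$ is a direct calculation from \eqref{Eq.Evaluation-of-f-for-sublattice-Y'-and-Y} that produces in passing the identity $\Disc(Df)^{Y'|Y} = [Y':Y] f(0,Y) - f(0,Y')$ recorded in the paragraph preceding the lemma.

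For (ii), let $f_{1}, f_{2}$ be two primitives of the same $g$ and put $f_{0} \defeq f_{1} - f_{2}$. By (i), $f_{0}$ is a distribution with $Df_{0} = 0$, and the goal is to show that $f_{0}$ satisfies both clauses of the definition of a constant distribution. The first clause, $f_{0}(\mathbf{u},Y) = f_{0}(0,Y)$ for all $\mathbf{u} \in V$, is immediate: for $\mathbf{u} \notin Y$ it is the equation $Df_{0}(\mathbf{u},Y) = 0$, and for $\mathbf{u} \in Y$ it is \eqref{Eq.Function-f-depends-only-on-n-modulo-Y} for $f_{0}$. The second clause, $f_{0}(0,Y') = [Y':Y] f_{0}(0,Y)$ for every lattice pair $Y'|Y$, follows by specialising \eqref{Eq.Evaluation-of-f-for-sublattice-Y'-and-Y} for $f_{0}$ to $\mathbf{v} = 0$: every one of the $[Y':Y]$ summands on the left-hand side equals $f_{0}(0,Y)$ by the first clause, while the right-hand side equals $f_{0}(0,Y')$.

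The only step that really asks for care is pinpointing the role of the no-$p$-torsion hypothesis on $M$. Since all indices $[Y':Y]$ are powers of $p$, this assumption presumably serves to prevent the scaling relation $f_{0}(0,Y') = [Y':Y] f_{0}(0,Y)$ from being trivially satisfied on $p$-torsion elements of $M$: without it, the uniqueness conclusion would have to be loosened to tolerate such torsion ambiguities in the second-variable behaviour of $f_{0}$. With the hypothesis imposed, the constant distributions form precisely $\ker(D)$ and the statement is as sharp as written. I do not foresee any further technical obstacle; the whole argument essentially reduces to the two specialisations $Df_{0} = 0$ on $\mathfrak{Y}^{*}$ and \eqref{Eq.Evaluation-of-f-for-sublattice-Y'-and-Y} at $\mathbf{v} = 0$.
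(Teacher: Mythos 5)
Your proof is correct, and it is essentially the argument the paper has in mind — the author states the lemma as a ``trivial observation'' and gives no proof, so there is nothing to compare against line by line. Part (i) is the linearity observation you make, and part (ii) is exactly the two specialisations you identify: $Df_0 = 0$ on $\mathfrak{Y}^*$ combined with the class-mod-$Y$ axiom gives the first clause, and the distribution axiom \eqref{Eq.Evaluation-of-f-for-sublattice-Y'-and-Y} at $\mathbf{v} = 0$, together with the first clause, gives the second.

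The one place where your write-up goes slightly astray is the closing paragraph about the no-$p$-torsion hypothesis. Your speculation that it ``prevents the scaling relation $f_0(0,Y') = [Y':Y]f_0(0,Y)$ from being trivially satisfied on $p$-torsion elements'' does not hold up: that relation is not something that needs to be protected from degeneracy — it is \emph{derived} from \eqref{Eq.Evaluation-of-f-for-sublattice-Y'-and-Y} by the pure counting argument you yourself carried out two lines earlier (there are $[Y':Y]$ classes $\mathbf{u} \in Y'/Y$, each contributing the same element $f_0(0,Y)$ of $M$), and this counting argument is valid in any abelian group $M$. Indeed, if you look back over your own proof, you will notice it nowhere invokes absence of $p$-torsion: you have in fact shown that $\ker(D)$ equals the set of constant distributions unconditionally. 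So the honest thing to say is that, at least for the uniqueness statement as formulated, the hypothesis appears to be superfluous (it would matter if one wanted, say, to identify the group of constant distributions with $M$ via $f \mapsto f(0,Y_0)$, which requires dividing by indices $[Y_0:Y']$ that are powers of $p$, but that is a different claim). Better to flag the hypothesis as apparently unused than to invent a role for it that the mathematics does not support.
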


For the next definition, we suppose that \(M\) is a Hausdorff topological group, so that the occurring limits are meaningful.

\begin{Definition}
	\begin{enumerate}[label=\(\mathrm{(\roman*)}\)]
		\item Let \(f \colon \mathfrak{Y} \to M\) be a distribution. It is \textbf{motivated by} \(m \colon V \smallsetminus \{0\} \to M\) if there exists a norm
		\(\lVert \ldot \rVert\) on \(V_{\infty}\) such that
		\begin{equation} \label{Eq.Series-expansion-for-distribution}
			f(\mathbf{u}, Y) = \lim_{N \to \infty} \sideset{}{^{\prime}} \sum_{\substack{\mathbf{x} \in V \\ \mathbf{x} \equiv \mathbf{u}~(\mathrm{mod } Y) \\ \log_{q} \lVert \mathbf{x} \rVert \leq N}} m(\mathbf{x}).
		\end{equation}
		\item A derived distribution \(g \colon \mathfrak{Y}^{*} \to M\) is \textbf{motivated by} \(m\) if 
		\begin{equation} \label{Eq.Derived-distribution-motivated-by-m}
			g(\mathbf{u}, Y) = \lim_{N \to \infty} \Big( \sideset{}{^{\prime}} \sum_{\substack{\mathbf{x} \in V \\ \mathbf{x} \equiv \mathbf{u} ~(\mathrm{mod } Y) \\ \log_{q} \lVert \mathbf{x} \rVert \leq N}} m(\mathbf{x}) - \sideset{}{^{\prime}} \sum_{\substack{\mathbf{y} \in Y \\ \log_{q} \lVert \mathbf{y} \rVert \leq N}} m(\mathbf{y}) \Big).
		\end{equation}
	\end{enumerate}
\end{Definition}

\begin{Remarks-nn}
	\begin{enumerate}[wide, label=(\roman*)]
		\item As one easily verifies, the choice of the norm \(\lVert \ldot \rVert\) on the finite-dimensional \(K_{\infty}\)-vector space \(V_{\infty}\) is irrelevant,
		as all norms are equivalent. Typical norms on \(V_{\infty}\) are the \(\lVert \ldot \rVert_{\boldsymbol{\omega}}\) with \(\boldsymbol{\omega} \in \Psi\), where
		\(\lVert \mathbf{x} \rVert_{\boldsymbol{\omega}} = \lvert \mathbf{x} \boldsymbol{\omega} \rvert\).
		\item If the distribution \(f\) is motivated by \(m\), so is its derivative \(g = Df\).
		\item Given \(m \colon V \smallsetminus \{0\} \to M\) such that the limits in \eqref{Eq.Series-expansion-for-distribution} (resp. 
		\eqref{Eq.Derived-distribution-motivated-by-m}) always exist, it defines a distribution (resp. derived distribution) on \(\mathfrak{Y}\).
		\item If the topology on \(M\) is discrete, each limit \(\lim_{N \to \infty} x_{N}\) is \textbf{stationary}, that is, \(\lim_{N \to \infty} x_{N} = x_{N}\) for
		\(N\) sufficiently large.
	\end{enumerate}	
\end{Remarks-nn}

We will see at once that the division forms \(d_{\mathbf{u}}^{Y}(\boldsymbol{\omega})\) define a derived distribution. Later it will come out that it is motivated
by \(m \colon \mathbf{x} \mapsto \mathbf{x}\boldsymbol{\omega}\), but in general, it isn't the derivative of a distribution.

\subsection{}\label{Subsection.Collection-of-facts}%
At several occasions, we will make use of the following easily proved facts. Let \(a,b\) be natural numbers with \(c \defeq \gcd(a,b)\). Then
\begin{equation} \label{Eq.gcd-condition}
	\gcd(q^{a} - 1, q^{b} - 1) = q^{c} - 1.
\end{equation}
Let \(a_{i} \in C_{\infty}^{*}\) be finitely many elements with assigned weights \(k_{i} \in \mathds{Z}\) and the property that
\[
	\prod a_{i}^{n_{i}} = 1 \qquad \text{whenever} \qquad \sum n_{i}k_{i} = 0 \quad (n_{i} \in \mathds{Z}).
\]
Then there exists a unique \(b \in C_{\infty}^{*}\) such that
\begin{equation} \label{Eq.Existence-of-b-infinity}
	a_{i} = b^{k_{i}/k} \quad (k \defeq \gcd(k_{i})),
\end{equation}
and \(b\) lies in the group generated by \(\{a_{i}\}\).

\section{Basic properties of the functions \(d_{\mathbf{u}}^{Y}\) and \(\Delta^{Y'|Y}\)} \label{Section.Basic-properties-of-the-functions}

\begin{Proposition}
	Let a lattice pair \(Y'|Y\) and \(\mathbf{v} \in V \smallsetminus Y'\) be given. Then the identity
	\begin{equation}
		\Delta^{Y'|Y} \prod_{\substack{\mathbf{u} \in V/Y \\ \mathbf{u} \equiv \mathbf{v}~(\mathrm{mod } Y')}} d_{\mathbf{u}}^{Y} = d_{\mathbf{v}}^{Y'}
	\end{equation}
	holds.
\end{Proposition}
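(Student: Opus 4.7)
The plan is to evaluate the isogeny polynomial $\varphi$ associated to the lattice pair $\Lambda = Y_{\boldsymbol{\omega}} \subset Y'_{\boldsymbol{\omega}} = \Lambda'$ at the point $d_{\mathbf{v}}^{Y}$. The two essential inputs are the identity $e^{\Lambda'} = \varphi \circ e^{\Lambda}$ of \ref{Subsection.Lattice-rank}, and the factorization of $\varphi$ into linear factors. Since $\varphi$ is $\mathds{F}$-linear of degree $q^{d} = [Y':Y]$ with zero set exactly $\{e^{\Lambda}(\boldsymbol{\lambda}\boldsymbol{\omega}) \mid \boldsymbol{\lambda} \in Y'/Y\} = \{d_{\mathbf{y}}^{Y} \mid \mathbf{y} \in Y'/Y\}$, and with leading coefficient $\Delta^{Y'|Y}$ by the definition \eqref{Eq.Discriminant-of-Phi}, it admits the factorization
\[
    \varphi(X) = \Delta^{Y'|Y} \prod_{\mathbf{y} \in Y'/Y}\bigl(X - d_{\mathbf{y}}^{Y}\bigr).
\]

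Next I would substitute $X = d_{\mathbf{v}}^{Y} = e^{\Lambda}(\mathbf{v}\boldsymbol{\omega})$. The left-hand side becomes
\[
    \varphi\bigl(e^{\Lambda}(\mathbf{v}\boldsymbol{\omega})\bigr) = e^{\Lambda'}(\mathbf{v}\boldsymbol{\omega}) = d_{\mathbf{v}}^{Y'},
\]
which is well-defined and nonzero because $\mathbf{v} \notin Y'$. To simplify the right-hand side I would use the $\mathds{F}$-linearity (in particular additivity) of $e^{\Lambda}$, which gives $d_{\mathbf{v}}^{Y} - d_{\mathbf{y}}^{Y} = e^{\Lambda}((\mathbf{v}-\mathbf{y})\boldsymbol{\omega}) = d_{\mathbf{v}-\mathbf{y}}^{Y}$ for every $\mathbf{y} \in Y'/Y$; note that $\mathbf{v} - \mathbf{y} \notin Y$ since $\mathbf{v} \notin Y'$, so each factor is nonzero and depends only on the class of $\mathbf{v}-\mathbf{y}$ modulo $Y$. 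Thus
\[
    \varphi(d_{\mathbf{v}}^{Y}) = \Delta^{Y'|Y} \prod_{\mathbf{y} \in Y'/Y} d_{\mathbf{v} - \mathbf{y}}^{Y}.
\]

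Finally I would re-index by $\mathbf{u} \defeq \mathbf{v} - \mathbf{y}$. As $\mathbf{y}$ ranges over representatives of $Y'/Y$, the class of $\mathbf{u}$ in $V/Y$ ranges exactly over the fibre of the projection $V/Y \twoheadrightarrow V/Y'$ above $\mathbf{v} \bmod Y'$, i.e.\ over those $\mathbf{u} \in V/Y$ with $\mathbf{u} \equiv \mathbf{v} \pmod{Y'}$. Combining the two evaluations of $\varphi(d_{\mathbf{v}}^{Y})$ yields the asserted identity.

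The only real subtlety is bookkeeping at the step where the $\mathds{F}$-linearity of $e^{\Lambda}$ is used to rewrite the linear factors of $\varphi$ as division values, together with verifying that the leading coefficient in the factorization of $\varphi$ is indeed $\Delta^{Y'|Y}$ and not, say, $(-1)^{q^{d}-1}\Delta^{Y'|Y}$; but in characteristic $p$ one has $(-1)^{q^{d}-1} = 1$ (trivially if $p = 2$, and because $q^{d}-1$ is even if $p$ is odd), so no sign correction is needed.
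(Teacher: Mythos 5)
Your proposal is correct and follows the same route as the paper: identify the normalized isogeny $\varphi$ with $e^{\Lambda'} = \varphi \circ e^{\Lambda}$, factor $\varphi$ into linear factors with roots $d_{\mathbf{w}}^{Y}$, substitute $X = d_{\mathbf{v}}^{Y}$, and use $\mathds{F}$-linearity of $e^{\Lambda}$ plus re-indexing $\mathbf{u} = \mathbf{v} - \mathbf{w}$. Your explicit remark that $(-1)^{q^{d}-1} = 1$ (reconciling the leading-coefficient definition \eqref{Eq.Discriminant-of-Phi} with the product $\big(\sideset{}{^{\prime}}\prod d_{\mathbf{w}}^{Y}\big)^{-1}$) is a sound point that the paper leaves implicit when passing between the two factorizations of $\varphi$.
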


\begin{proof}
	Fix \(\boldsymbol{\omega} \in \Psi\) and let \(\varphi\) be the unique normalized isogeny from the Drinfeld module \(\phi = \phi^{Y_{\boldsymbol{\omega}}}\) to
	\(\phi' = \phi^{Y_{\boldsymbol{\omega}}'}\). Then
	\begin{equation}
		e^{Y_{\boldsymbol{\omega}}'} = \varphi \circ e^{Y_{\boldsymbol{\omega}}}
	\end{equation}
	and, according to \ref{Subsection.Assumptions-on-lattice-and-fixed-element} and \ref{Subsection.Lattice-rank},
	\begin{equation}
		\varphi(X) = X \sideset{}{^{\prime}} \prod_{\mathbf{w} \in Y'/Y} \big(1 - (d_{\mathbf{w}}^{Y})^{-1} X \big) = \Delta^{Y'|Y}(\boldsymbol{\omega}) \prod_{\mathbf{w} \in Y'/Y} (X - d_{\mathbf{w}}^{Y})
	\end{equation}
	(where \(d_{0}^{Y} = 0\)). The \(\mathbf{u} \in V/Y\) with \(\mathbf{u} \equiv \mathbf{v} \pmod{Y'}\) are obtained by \(\mathbf{u} = \mathbf{v} - \mathbf{w}\) with
	\(\mathbf{w} \in Y'/Y\). Hence, upon inserting \(d_{\mathbf{v}}^{Y}(\boldsymbol{\omega}) = e^{Y_{\boldsymbol{\omega}}}(\mathbf{v}\boldsymbol{\omega})\), the right hand
	side gives
	\begin{align*}
		\Delta^{Y'|Y}(\boldsymbol{\omega}) \prod_{\mathbf{w} \in Y'/Y} \big( e^{Y_{\boldsymbol{\omega}}}(\mathbf{v}\boldsymbol{\omega}) - e^{Y_{\boldsymbol{\omega}}}(\mathbf{w} \boldsymbol{\omega}) \big) 	&= \Delta^{Y'|Y}(\boldsymbol{\omega}) \prod_{\substack{\mathbf{u} \in V \\ \mathbf{u} \equiv \mathbf{v}~(\mathrm{mod } Y')}} e^{Y_{\boldsymbol{\omega}}}(\mathbf{u}\boldsymbol{\omega}) \\
										&= \Delta^{Y'|Y}(\boldsymbol{\omega}) \prod_{\mathbf{u}} d_{\mathbf{u}}^{Y_{\boldsymbol{\omega}}},
	\end{align*}
	while
	\[
		\varphi(e^{Y_{\boldsymbol{\omega}}}(\mathbf{v} \boldsymbol{\omega})) = e^{Y_{\boldsymbol{\omega}}'}(\mathbf{v}\boldsymbol{\omega}) = d_{\mathbf{v}}^{Y'}(\boldsymbol{\omega}).
	\]
\end{proof}

\subsection{} The proposition states that \(g(\mathbf{u}, Y) \defeq d_{\mathbf{u}}^{Y}\) describes a derived distribution with values in the multiplicative group
\(C_{\infty}^{*}\) (if the argument \(\boldsymbol{\omega}\) of \(d_{\mathbf{u}}^{Y}\) is fixed), or in the multiplicative group of holomorphic invertible functions
\(\mathcal{O}(\Psi)^{*}\) on \(\Psi\) (if \(d_{\mathbf{u}}^{Y}\) is regarded as a function on \(\Psi\)).

We keep this dichotomy in what follows (regarding \(\boldsymbol{\omega}\) either as fixed or as a variable on \(\Psi\)), but write in general for simplicity only the case
of a constant \(\boldsymbol{\omega}\). Then \(\Delta^{Y'|Y} = (\sideset{}{^{\prime}} \prod _{\mathbf{u} \in Y'/Y} d_{\mathbf{u}}^{Y})^{-1}\) becomes the discriminant
\(\Disc(g)^{Y'|Y}\) as formally defined in \eqref{Eq.Definition-of-discriminant-of-lattice-pair-wrt-function}, and from \eqref{Eq.Expansion-discriminant-of-lattice-pair} we find
\begin{equation}\label{Eq.Delta-form-on-lattice-tower}
	\Delta^{Y''|Y} = \Delta^{Y''|Y'} \cdot (\Delta^{Y'|Y})^{[Y'':Y]}
\end{equation}
if \(Y''|Y'|Y\).

Next we give an application of the derived distribution property of the \(d_{*}^{*}\) to the multiplicative Hecke action on the forms \(\Delta^{Y'|Y}\).

\subsection{}\label{Subsection.Lattices-and-grassmanians}%
Let \(\mathfrak{p}\) be a prime ideal of \(A\) coprime with the lattice pair \(Y'|Y\), which means that \(\mathfrak{p}\) doesn't divide the Euler-Poincaré characteristic
\(\chi(Y'|Y)\) of the finite \(A\)-module \(Y'|Y\). We write \((\mathfrak{p}, \chi(Y'|Y)) = 1\) for this property. Then canonically,
\begin{equation}
	Y/\mathfrak{p}Y \overset{\cong}{\longrightarrow} Y'/\mathfrak{p}Y',
\end{equation}
which is an \(r\)-dimensional vector space over the finite field \(\mathds{F}_{\mathfrak{p}} \defeq A/\mathfrak{p}\). An \(A\)-lattice \(Z\) with 
\(\mathfrak{p}Y \subset Z \subset Y\) has \textbf{type} \(i\) \textbf{for} \((Y, \mathfrak{p})\) if
\begin{equation}
	\dim_{\mathds{F}_{\mathfrak{p}}}(Z/\mathfrak{p}Y) = i \quad (0 \leq i < r).
\end{equation}
Then
\begin{equation}\label{Eq.Bijection-of-lattices-of-type-i}
	Z \longmapsto Z' \defeq Z + \mathfrak{p}Y'
\end{equation}
is a well-defined bijection from the set \(\mathcal{L}(Y, \mathfrak{p}, i)\) of lattices of type \(i\) for \((Y, \mathfrak{p})\) to \(\mathcal{L}(Y', \mathfrak{p}, i)\), with
inverse \(Z' \mapsto Z \defeq Z' \cap Y\). We have
\begin{equation}\label{Eq.Relation-of-cardinalities-for-lattices-and-grassmanians}
	\# \mathcal{L}(Y, \mathfrak{p}, i) = \# \Gr_{\mathds{F}_{\mathfrak{p}}}(r,i) \eqdef c_{i}^{(r)}(\mathfrak{p}),
\end{equation}
where \(\Gr_{\mathds{F}_{\mathfrak{p}}}(r,i)\) is the Grassmannian of the \(i\)-subspaces of \(\mathds{F}_{\mathfrak{p}}^{r}\). Its cardinality \(c_{i}^{(r)}(\mathfrak{p})\)
is given by a well-known formula (\cite{Shimura71} Ch. III Proposition 3.18; replace \(p\) with \(q_{\mathfrak{p}} \defeq \# \mathds{F}_{\mathfrak{p}}\)), for which we currently 
have no use.

The \(\mathfrak{p}\)-\textbf{th Hecke correspondence of type} \(i\) (\(0 < i < r\)) is the set-valued map \(Y \mapsto \mathcal{L}(Y, \mathfrak{p}, i)\). Regarding a 
modular form \(f\) for \(\Gamma_{Y} = \GL(Y)\) as a certain function \(f(\boldsymbol{\omega}) = f(Y_{\boldsymbol{\omega}})\) on the set of lattices 
\(\Lambda = Y_{\boldsymbol{\omega}}\) isomorphic with \(Y\), the \(\mathfrak{p}\)-\textbf{th Hecke Operator of type} \(i\) acts on \(f\) through
\begin{equation}\label{Eq.Definition-of-Hecke-operator-of-type-i}
	T_{\mathfrak{p},i}(f)(Y_{\boldsymbol{\omega}}) = \sum_{Z \in \mathcal{L}(Y,\mathfrak{p},i)} f(Z_{\boldsymbol{\omega}}).
\end{equation}
(In the notation of Shimura \cite{Shimura71} III, \(T_{\mathfrak{p},i}\) would be written as \(T(1,\dots,1,\mathfrak{p}, \dots, \mathfrak{p})\) with \(i\) 1's and 
\((r-i)\) \(\mathfrak{p}\)'s.)

\subsection{} We postpone the study of Hecke operators in the above sense to possible future work. Instead, we investigate products analogous with 
\eqref{Eq.Definition-of-Hecke-operator-of-type-i}, and where \(f\) is a discriminant \(\Delta^{Y'|Y}\). That is, we are concerned with a multiplicative Hecke operator
\(T_{\mathfrak{p},i}^{*}\), where
\begin{equation}
	T_{\mathfrak{p},i}^{*}(f)(Y_{\boldsymbol{\omega}}) = \prod_{Z \in \mathcal{L}(Y,\mathfrak{p},i)} f(Z_{\boldsymbol{\omega}}).
\end{equation}
Assuming the framework of \ref{Subsection.Lattices-and-grassmanians}, where \(\mathfrak{p}\) is a prime of degree \(d \in \mathds{N}\), there is the following result.

\begin{Theorem} \label{Theorem.Action-of-Hecke-operator-on-Delta-form-for-lattice-pair}
	The action of \(T_{\mathfrak{p},i}^{*}\) on \(\Delta^{Y'|Y}\) is given by
	\begin{equation}\label{Eq.Theorem-Hecke-operator-in-terms-of-Delta-forms}
		T_{\mathfrak{p},i}^{*}(\Delta^{Y'|Y}) = \left( \frac{\Delta^{\mathfrak{p} Y'|\mathfrak{p}Y}}{\Delta^{Y'|Y}} \right)^{e} (\Delta^{Y'|Y})^{c_{i}^{(r)}(\mathfrak{p})}
	\end{equation}	
	with the exponent \(c_{i}^{(r)}(\mathfrak{p})\) of \eqref{Eq.Relation-of-cardinalities-for-lattices-and-grassmanians} and
	\begin{equation}
		e = c_{i}^{(r)}(\mathfrak{p}) q^{d_{i}}(q^{d(r-i)} - 1)(q^{dr} - 1)^{{-}1}.
	\end{equation}
\end{Theorem}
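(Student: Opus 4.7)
The plan is to express the Hecke product \(T_{\mathfrak{p},i}^{*}(\Delta^{Y'|Y})\) in terms of discriminants sitting over the common base \(\mathfrak{p}Y\) (respectively \(\mathfrak{p}Y'\)), then use a \(\GL_{r}(\mathds{F}_{\mathfrak{p}})\)-symmetry argument to reduce these to the discriminants \(\Delta^{Y|\mathfrak{p}Y}\) and \(\Delta^{Y'|\mathfrak{p}Y'}\), and finally eliminate these via the tower formula \eqref{Eq.Delta-form-on-lattice-tower} in favour of \(\Delta^{Y'|Y}\) and \(\Delta^{\mathfrak{p}Y'|\mathfrak{p}Y}\) alone.

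First I record the relevant indices. The coprimality \((\mathfrak{p}, \chi(Y'|Y)) = 1\) makes \(\mathfrak{p}\) act invertibly on the finite module \(Y'/Y\), so \(Y + \mathfrak{p}Y' = Y'\) and \(\mathfrak{p}Y' \cap Y = \mathfrak{p}Y\). Combined with \(Z = Z' \cap Y\) from \eqref{Eq.Bijection-of-lattices-of-type-i}, this yields \(Z'/Z \cong (Z'+Y)/Y = Y'/Y\), so \([Z':Z] = [Y':Y]\); likewise \([Z':\mathfrak{p}Y'] = [Z:\mathfrak{p}Y] = q^{di}\). Applying the tower formula to the two chains \(\mathfrak{p}Y \subset Z \subset Z'\) and \(\mathfrak{p}Y \subset \mathfrak{p}Y' \subset Z'\) and equating the resulting expressions for \(\Delta^{Z'|\mathfrak{p}Y}\), I obtain the factorization
\[
\Delta^{Z'|Z} = \Delta^{Z'|\mathfrak{p}Y'} \cdot (\Delta^{\mathfrak{p}Y'|\mathfrak{p}Y})^{q^{di}} \cdot (\Delta^{Z|\mathfrak{p}Y})^{-[Y':Y]}.
\]

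Now I take the product over \(Z \in \mathcal{L}(Y, \mathfrak{p}, i)\); the middle factor is \(Z\)-independent and contributes \((\Delta^{\mathfrak{p}Y'|\mathfrak{p}Y})^{q^{di} c_{i}^{(r)}(\mathfrak{p})}\). For \(\prod_{Z}\Delta^{Z|\mathfrak{p}Y}\), expand each factor as a product of division values \(d_{\mathbf{u}}^{\mathfrak{p}Y}\) over nonzero \(\mathbf{u} \in Z/\mathfrak{p}Y\) and swap the order of multiplication: by transitivity of \(\GL_{r}(\mathds{F}_{\mathfrak{p}})\) on the nonzero vectors of \(Y/\mathfrak{p}Y\), each such \(\mathbf{u}\) lies in exactly the same number \(N\) of type-\(i\) sublattices, and a double count with \eqref{Eq.Relation-of-cardinalities-for-lattices-and-grassmanians} gives
\[
N = c_{i-1}^{(r-1)}(\mathfrak{p}) = c_{i}^{(r)}(\mathfrak{p})\,\frac{q^{di}-1}{q^{dr}-1}.
\]
Hence \(\prod_{Z}\Delta^{Z|\mathfrak{p}Y} = (\Delta^{Y|\mathfrak{p}Y})^{N}\); via the bijection \(Z \leftrightarrow Z'\) onto \(\mathcal{L}(Y',\mathfrak{p},i)\) the analogous identity \(\prod_{Z}\Delta^{Z'|\mathfrak{p}Y'} = (\Delta^{Y'|\mathfrak{p}Y'})^{N}\) also holds.

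Finally, to eliminate the auxiliary \(\Delta^{Y'|\mathfrak{p}Y'}\) and \(\Delta^{Y|\mathfrak{p}Y}\) I apply \eqref{Eq.Delta-form-on-lattice-tower} once more to the towers \(\mathfrak{p}Y \subset Y \subset Y'\) and \(\mathfrak{p}Y \subset \mathfrak{p}Y' \subset Y'\) with common apex \(Y'\), which yields
\[
\Delta^{Y'|Y} \cdot (\Delta^{Y|\mathfrak{p}Y})^{[Y':Y]} = \Delta^{Y'|\mathfrak{p}Y'} \cdot (\Delta^{\mathfrak{p}Y'|\mathfrak{p}Y})^{q^{dr}}.
\]
Substituting into the assembled product, the \(\Delta^{Y|\mathfrak{p}Y}\)-factors cancel, the exponent of \(\Delta^{Y'|Y}\) is \(N\), and the exponent of \(\Delta^{\mathfrak{p}Y'|\mathfrak{p}Y}\) becomes \(q^{di}c_{i}^{(r)}(\mathfrak{p}) - Nq^{dr}\). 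A short arithmetic check shows this last quantity equals \(e\) and \(N = c_{i}^{(r)}(\mathfrak{p}) - e\), delivering \eqref{Eq.Theorem-Hecke-operator-in-terms-of-Delta-forms}. The main obstacle is the combinatorial book-keeping across several towers and the verification of the Grassmannian identity for \(N\); conceptually the argument rests on the single observation that passage to the common base \(\mathfrak{p}Y\) makes the Hecke product symmetric under \(\GL_{r}(\mathds{F}_{\mathfrak{p}})\), which is precisely what collapses \(\prod_{Z}\Delta^{Z|\mathfrak{p}Y}\) to a single power of \(\Delta^{Y|\mathfrak{p}Y}\).
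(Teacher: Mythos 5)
Your proof is correct, and it takes a genuinely different route from the paper's. The paper fixes explicit systems of representatives $P \subset Y$ for $Y/\mathfrak{p}Y$ and $Q \subset Y'$ for $Y'/Y$, rewrites every discriminant as a product of division forms $d_{\mathbf{w}}^{\mathfrak{p}Y}$ over subsets of $P \oplus Q$, and then performs the multiplicity bookkeeping at the level of individual division forms (the quantity your $N$ is called $m$ there, obtained by the same double count). You instead work \emph{entirely} at the level of $\Delta$-forms: you pass to the common base $\mathfrak{p}Y$ by applying the tower formula \eqref{Eq.Delta-form-on-lattice-tower} twice (once along $\mathfrak{p}Y \subset Z \subset Z'$, once along $\mathfrak{p}Y \subset \mathfrak{p}Y' \subset Z'$), collapse $\prod_Z \Delta^{Z|\mathfrak{p}Y}$ to $(\Delta^{Y|\mathfrak{p}Y})^N$ by the $\GL_r(\mathbb{F}_\mathfrak{p})$-transitivity argument, and then eliminate the auxiliary $\Delta^{Y|\mathfrak{p}Y}$ and $\Delta^{Y'|\mathfrak{p}Y'}$ by a third pair of towers with apex $Y'$. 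The underlying combinatorial fact is the same in both proofs (the number $N = c_{i-1}^{(r-1)}(\mathfrak{p})$ of type-$i$ sublattices through a fixed nonzero vector), but your version avoids choosing representatives and avoids descending to the division forms, which makes the cancellation of $\Delta^{Y|\mathfrak{p}Y}$ at the end quite transparent. One small note: in the paper, the tower formula \eqref{Eq.Delta-form-on-lattice-tower} is printed with exponent $[Y'':Y]$; the exponent implied by \eqref{Eq.Expansion-discriminant-of-lattice-pair} (and used in both the paper's proof and yours) is $[Y'':Y']$ — you have it right. Your arithmetic identities $N = c - e$ and $q^{di}c - Nq^{dr} = e$ check out.
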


\begin{proof}
	\begin{enumerate}[wide, label=(\roman*)]
		\item We will use the so-far established properties of the \(d_{*}^{*}\) and \(\Delta^{**}\) and the geometry of \(Y/\mathfrak{p}Y \cong \mathds{F}_{\mathfrak{p}}^{r}\).
		The frequently occurring \(c_{i}^{(r)}(\mathfrak{p})\) will be abbreviated by \(c\). For \(Z \in \mathcal{L}(Y, \mathfrak{p}, i)\), we let 
		\(Z' \in \mathcal{L}(Y', \mathfrak{p}, i)\) be as in \eqref{Eq.Bijection-of-lattices-of-type-i}. The diagram of inclusions of lattices
		\begin{equation}
			\begin{tikzcd}
				\mathfrak{p}Y' \ar[r, hook]					& Z' \ar[r, hook]				& Y' \\
				\mathfrak{p}Y \ar[u, hook] \ar[r, hook]		& Z \ar[u, hook] \ar[r, hook]	& Y \ar[u, hook]
			\end{tikzcd}
		\end{equation}
		will be crucial.
		\item We calculate 
		\begin{equation}
			\frac{T_{\mathfrak{p},i}^{*}(\Delta^{Y'|Y})}{(\Delta^{Y'|Y})^{c}} = \prod_{Z \in \mathcal{L}(Y, \mathfrak{p}, i)} \left( \frac{\Delta^{Z'|Z}}{\Delta^{Y'|Y}} \right),
		\end{equation}
		taking into account that \(\Delta^{Z'|Z}\) and \(\Delta^{Y'|Y}\) may be written as products of division forms \(d_{\mathbf{u}}^{\mathfrak{p}Y}\).
		\item First, we choose representatives for \(Y'/\mathfrak{p}Y\) as follows. From \((\mathfrak{p}, \chi(L'/L)) = 1\), the \(A\)-module \(Y'/\mathfrak{p}Y\) splits
		canonically into its \(\mathfrak{p}\)-primary part \(Y/\mathfrak{p}Y\) and its non-\(\mathfrak{p}\)-part \(\overline{Q}\), which maps isomorphically onto
		\(Y'/Y\). Let \(P\) be an \(\mathds{F}\)-subvector space of \(Y\) complementary with \(\mathfrak{p}Y\), which we endow via \(P \overset{\simeq}{\to} Y/\mathfrak{p}Y\)
		with an \(\mathds{F}_{\mathfrak{p}}\)-structure. In particular, \(Z \in \mathcal{L}(Y, \mathfrak{p}, i)\) corresponds to an \(\mathds{F}_{\mathfrak{p}}\)-subspace,
		labelled \(\overline{Z}\), of dimension \(i\) of \(P \cong \mathds{F}_{\mathfrak{p}}^{r}\). Further, we let \(Q\) be an \(\mathds{F}\)-vector space system of 
		representatives for \(\overline{Q}\) in \(Y'\). Then
		\begin{equation}
			Y' = \mathfrak{p}Y \oplus P \oplus Q,
		\end{equation}
		and for each \(Z\), its associated \(Z'\) is \(Z \oplus Q\).
		\item Now 
		\[
			(\Delta^{Y'|Y})^{-1} = \sideset{}{^{\prime}} \prod_{\mathbf{u} \in Q} d_{\mathbf{u}}^{Y} = \Big( \sideset{}{^{\prime}} \prod_{\mathbf{u} \in Q} \prod_{\mathbf{v} \in P} d_{\mathbf{u} + \mathbf{v}}^{\mathfrak{p}Y} \Big)(\Delta^{Y|\mathfrak{p}Y})^{\#Q-1}.
		\]
		For each subset \(W\) of \(P \oplus Q \smallsetminus \{0\}\), put
		\begin{equation}
			\pi(W) \defeq \prod_{\mathbf{w} \in W} d_{\mathbf{w}}^{\mathfrak{p}Y}.
		\end{equation}
		Thus the above becomes
		\begin{equation}\label{Eq.Inverse-of-Delta-form-of-lattice-pair}
			(\Delta^{Y'|Y})^{-1} = \pi(P \oplus Q \smallsetminus P) \pi(P \smallsetminus \{0\})^{1 - \#Q}.
		\end{equation}
		\item We aim to write \(\prod_{Z} \Delta^{Z'|Z}\) in the same format. For \(\mathbf{u} \in Q \smallsetminus \{0\}\),
		\[
			d_{\mathbf{u}}^{Z} = \Big( \prod_{\substack{\mathbf{v} \in P \oplus Q \\ \mathbf{v} \equiv \mathbf{u}~(\mathrm{mod } \mathfrak{p}Y)}} d_{\mathbf{v}}^{\mathfrak{p}Y} \Big) \Delta^{Z|\mathfrak{p}Y} = \prod_{\mathbf{v} \in \overline{Z}} d_{\mathbf{u} + \mathbf{v}}^{\mathfrak{p}Y}(\pi(\overline{Z} \smallsetminus \{0\}))^{-1},
		\]
		and so
		\begin{align}\label{Eq.Inverseo-of-product-of-Delta-forms}
			\Big( \prod_{Z \in \mathcal{L}(Y, \mathfrak{p}, i)} \Delta^{Z'|Z} \Big)^{-1} 	&= \prod_{Z} \sideset{}{^{\prime}} \prod_{\mathbf{u} \in Z'/Z} d_{\mathbf{u}}^{Z} \\
																							&= \prod_{Z} \sideset{}{^{\prime}} \prod_{\mathbf{u} \in Q} \prod_{\mathbf{v} \in \overline{Z}} d_{\mathbf{u} + \mathbf{v}}^{\mathfrak{p}Y} / \Big( \prod_{Z} \pi(\overline{Z} \smallsetminus \{0\})^{\#Q - 1} \Big). \nonumber
		\end{align}
		\item Let us first treat the denominator of \eqref{Eq.Inverseo-of-product-of-Delta-forms}. In the product 
		\[
			\prod_{Z} \pi(\overline{Z} \smallsetminus \{0\}) = \prod_{Z} \sideset{}{^{\prime}} \prod_{\mathbf{v} \in \overline{Z}} d_{\mathbf{v}}^{\mathfrak{p}Y} = \sideset{}{^{\prime}} \prod_{\mathbf{w} \in P} (d_{\mathbf{w}}^{\mathfrak{p}Y})^{m},
		\]
		each subscript \(0 \neq \mathbf{w} \in P\) appears with multiplicity
		\begin{equation}
			m = c \cdot (q^{d_{i}} - 1)/(\# P -1)
		\end{equation}
		independently of \(\mathbf{w}\). Here \(c = c_{i}^{(r)}(\mathfrak{p}) = \#\{Z\}\), \(q^{d_{i}} = \#\overline{Z}\), \(\# P = q^{d_{r}}\). Hence the denominator
		of \eqref{Eq.Inverseo-of-product-of-Delta-forms} is
		\begin{equation}
			\prod_{Z} \pi(\overline{Z} \smallsetminus \{0\})^{\#Q - 1} = \pi(P \smallsetminus \{0\})^{m \cdot(\#Q - 1)}.
		\end{equation}
		\item The numerator of \eqref{Eq.Inverseo-of-product-of-Delta-forms} is
		\[
			\prod_{Z} \sideset{}{^{\prime}} \prod_{\mathbf{u} \in Q} \prod_{\mathbf{v} \in \overline{Z}} d_{\mathbf{u} + \mathbf{v}}^{\mathfrak{p}Y} = \sideset{}{^{\prime}} \prod_{\mathbf{w} \in P \oplus Q} (d_{\mathbf{w}}^{\mathfrak{p}Y})^{m(\mathbf{w})}
		\]
		with certain multiplicities \(m(\mathbf{w})\). Since \(\mathbf{u} \neq 0\), the subscript \(\mathbf{w} = \mathbf{u} + \mathbf{v}\) always lies in 
		\(P \oplus Q \smallsetminus P\). The value \(\mathbf{w} = \mathbf{u}\) with \(\mathbf{v} = 0\) appears precisely once for each \(Z\). Hence
		\(m(\mathbf{w}) = c = \#\{Z\}\) if \(\mathbf{w} = \mathbf{u} \in Q \smallsetminus \{0\}\). On the other hand, if \(\mathbf{w} = \mathbf{u} + \mathbf{v}\) with
		\(\mathbf{v} \neq 0\) (i.e., \(\mathbf{w} \in P \oplus Q \smallsetminus P \smallsetminus Q\)), then \(m(\mathbf{w}) = c\cdot (q^{di} - 1)/(q^{dr} - 1)\),
		since the \(\#\{Z\}(\#(Q)-1)(q^{di}-1)\) values of \(\mathbf{w} = \mathbf{u} + \mathbf{v}\) with \(\mathbf{v} \neq 0\) in the triple product are evenly distributed
		over the \(\#(P \oplus Q \smallsetminus P \smallsetminus Q) = (\#Q - 1)(\#P - 1)\) elements of \(P \oplus Q \smallsetminus P \smallsetminus Q\). So the numerator is
		\begin{equation}\label{Eq.Numerator-of-the-term-for-the-inverse-of-the-product-of-Delta-forms}
			\prod_{Z} \sideset{}{^{\prime}}\prod_{\mathbf{u} \in Q} \prod_{\mathbf{v} \in \overline{Z}} d_{\mathbf{u} + \mathbf{v}}^{\mathfrak{p}Y} = \pi(Q \smallsetminus \{0\})^{c} \pi(P \oplus Q \smallsetminus P \smallsetminus Q)^{c(q^{di}-1)/(q^{dr}-1)}
		\end{equation}
		with \(c = c_{i}^{(r)}(\mathfrak{p})\).
		\item Combining formulas \eqref{Eq.Inverse-of-Delta-form-of-lattice-pair} to \eqref{Eq.Numerator-of-the-term-for-the-inverse-of-the-product-of-Delta-forms}, the 
		obvious 
		\begin{equation}
			\pi(W_{1} \cupdot W_{2}) = \pi(W_{1}) \pi(W_{2})
		\end{equation}
		for \(W_{1} = P \oplus Q \smallsetminus P \smallsetminus Q\), \(W_{2} = P \smallsetminus \{0\}\), and simplifying, one finds
		\begin{equation}\label{Eq.Product-formula-over-lattices}
			\prod_{Z \in \mathcal{L}(Y, \mathfrak{p}, i)} \frac{\Delta^{Z'|Z}}{\Delta^{Y'|Y}} = \pi(P \oplus Q \smallsetminus P \smallsetminus Q)^{c\ell} \pi(P \smallsetminus \{0\})^{{-}c\ell(\#Q - 1)}
		\end{equation}
		with \(\ell \defeq 1 - (q^{d_{i}}-1)/(q^{d_{r}}-1) = q^{di}(q^{d(r-i)} - 1)/(q^{dr}-1)\).
		\item It remains to evaluate
		\begin{align*}
			\pi(P \oplus Q \smallsetminus P \smallsetminus Q)/\pi(P \smallsetminus \{0\})^{\#Q - 1}	&= \frac{\pi(P \oplus Q \smallsetminus \{0\}) \pi^{-1}(Q \smallsetminus \{0\})}{\pi(P \smallsetminus \{0\})^{\#Q}} \\
								&= (\Delta^{Y|\mathfrak{p}Y})^{\# Q} (\Delta^{Y'|\mathfrak{p}Y'})^{-1} \Delta^{\mathfrak{p}Y'|\mathfrak{p}Y} \\
								&= \Delta^{\mathfrak{p}Y'|\mathfrak{p}Y}/\Delta^{Y'|Y}	
		\end{align*}
		by \eqref{Eq.Delta-form-on-lattice-tower} and \([Y':Y] = \#Q\). Together with \eqref{Eq.Product-formula-over-lattices}, this gives
		\[
			\prod_{Z} \frac{\Delta^{Z'|Z}}{\Delta^{Y'|Y}} = \left( \frac{\Delta^{\mathfrak{p}Y'|\mathfrak{p}Y}}{\Delta^{Y'|Y}} \right)^{c\ell}
		\]
		with \(c = c_{i}^{(r)}(\mathfrak{p})\) and \(c\ell = e\) as stated in \eqref{Eq.Theorem-Hecke-operator-in-terms-of-Delta-forms}.
	\end{enumerate}
\end{proof}

\begin{Corollary}\label{Corollary.Action-of-Hecke-operators-for-principal-ideals}
	In the situation of the theorem, assume that \(\mathfrak{p}\) is principal, generated by \(n \in A\).\footnote{The letter \(p\) is occupied by 
	\(p = \Characteristic(\mathds{F})\).} Then 
	\begin{equation}
		T_{\mathfrak{p},i}^{*}(\Delta^{Y'|Y}) = n^{e'}(\Delta^{Y'|Y})^{c}
	\end{equation}	
	with the exponent \(e' = (1 - [Y':Y])e\) and \(e\) and \(c = c_{i}^{(r)}(\mathfrak{p})\) as in \eqref{Eq.Theorem-Hecke-operator-in-terms-of-Delta-forms}.
\end{Corollary}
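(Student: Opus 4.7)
The plan is to specialize Theorem \ref{Theorem.Action-of-Hecke-operator-on-Delta-form-for-lattice-pair} by evaluating the ratio $\Delta^{\mathfrak{p}Y'|\mathfrak{p}Y}/\Delta^{Y'|Y}$ in closed form when $\mathfrak{p} = (n)$ is principal. In that case $\mathfrak{p}Y = nY$ and $\mathfrak{p}Y' = nY'$, so the upper lattice pair is obtained from $Y'|Y$ by the scalar homothety $n$; all that needs doing is to track how the division and discriminant forms transform under this homothety.

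The first ingredient is the scaling law for division forms. Starting from the defining product expansion of $e^{\Lambda}$, one checks directly that $e^{c\Lambda}(cz) = c\, e^{\Lambda}(z)$ for every $0 \neq c \in C_\infty$. Specialising this with $\Lambda = Y_{\boldsymbol{\omega}}$, $z = \mathbf{u}\boldsymbol{\omega}$ and $c = n$ yields
\[
d_{n\mathbf{u}}^{nY}(\boldsymbol{\omega}) \;=\; n\cdot d_{\mathbf{u}}^{Y}(\boldsymbol{\omega}) \qquad (\mathbf{u} \in V).
\]

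Next, I would use the bijection $Y'/Y \xrightarrow{\;\cdot n\;} nY'/nY$ to rewrite the product defining $\Delta^{nY'|nY}$ in terms of the $d_{\mathbf{u}}^{Y}$. Factoring out the $[Y':Y] - 1$ copies of $n$ contributed by the nonzero classes of $Y'/Y$ gives
\[
\Delta^{nY'|nY} \;=\; n^{1-[Y':Y]}\,\Delta^{Y'|Y},
\]
so that $\Delta^{\mathfrak{p}Y'|\mathfrak{p}Y}/\Delta^{Y'|Y} = n^{1-[Y':Y]}$. Substituting into \eqref{Eq.Theorem-Hecke-operator-in-terms-of-Delta-forms} produces exactly the stated formula with $e' = (1-[Y':Y])e$. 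No serious obstacle arises: the principal case reduces immediately to the homogeneity of $e^{\Lambda}$, and the only bookkeeping is the source of the $-1$ in the exponent, namely that the class $\mathbf{u}=0$ is omitted in the primed product over $Y'/Y$.
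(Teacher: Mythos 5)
Your proposal is correct and takes essentially the same route as the paper: the paper invokes the fact that $\Delta^{Y'|Y}$ is a modular form of weight $[Y':Y]-1$ to conclude $\Delta^{nY'|nY}=n^{1-[Y':Y]}\Delta^{Y'|Y}$, whereas you re-derive this scaling law from the homogeneity $e^{c\Lambda}(cz)=c\,e^{\Lambda}(z)$ and the product formula, which is just the underlying computation made explicit. Both then substitute into Theorem \ref{Theorem.Action-of-Hecke-operator-on-Delta-form-for-lattice-pair} in the same way.
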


\begin{proof}
	We use \(\mathfrak{p}Y' = nY'\), \(\mathfrak{p}Y = nY\) and the fact that \(\Delta^{Y'|Y}\) as a modular form has weight \([Y':Y]-1\). This gives
	\(\Delta^{nY'|nY} = n^{1 - [Y':Y]}\Delta^{Y'|Y}\), and thus the assertion.
\end{proof}

\begin{Remarks} \label{Remarks.Essentiality-of-primality}
	\begin{enumerate}[wide, label=(\roman*)]
		\item The assumption in \ref{Subsection.Lattices-and-grassmanians} that \(\mathfrak{p}\) is a prime ideal is not essential, and was only made for reasons of
		presentation. Results generalizing Theorem \ref{Theorem.Action-of-Hecke-operator-on-Delta-form-for-lattice-pair} can be obtained for multiplicative Hecke
		operators \(T_{\mathfrak{n},i}^{*}\) for arbitrary ideals \(\mathfrak{n} \subset A\).
		\item Consider \(\Delta^{Y'|Y}\) as a modular form for the group \(\Gamma_{Y'|Y} = \GL(Y) \cap \GL(Y')\), defined on \(\Omega = C_{\infty}^{*} \backslash \Psi\).
		Let
		\[
			P \colon \mathcal{O}(\Omega)^{*} \longrightarrow \mathbf{H}(\mathcal{BT}, \mathds{Z})
		\]
		be the van der Put transform from the multiplicative group of invertible holomorphic functions on \(\Omega\) to the additive group of \(\mathds{Z}\)-valued harmonic
		\(1\)-cochains on the Bruhat-Tits building \(\mathcal{BT} = \mathcal{BT}^{r}\) of \(\PGL(r, K_{\infty})\), see \cite{Gekeler20} for details. It is 
		\(\GL(r, K_{\infty})\)-equivariant and defines a short exact sequence
		\[
			\begin{tikzcd}
				1 \ar[r]		& C_{\infty}^{*} \ar[r]	& \mathcal{O}(\Omega)^{*} \ar[r, "P"] & \mathbf{H}(\mathcal{BT}, \mathds{Z}) \ar[r]	& 0.
			\end{tikzcd}
		\]
		Then \(P(\Delta^{Y'|Y})\) belongs to the invariants \(\mathbf{H}(\mathcal{BT}, \mathds{Z})^{\Gamma_{Y'|Y}}\). As 
		\(T_{\mathfrak{p},i}^{*}(\Delta^{Y'|Y}) = \mathrm{const.} (\Delta^{Y'|Y})^{c}\) if \(\mathfrak{p} = (n)\) is principal, \(P(\Delta^{Y'|Y})\) is an eigenform under
		the additive Hecke operator \(T_{\mathfrak{p}, i}\) that acts on \(\mathbf{H}(\mathcal{BT}, \mathds{Z})^{\Gamma_{Y'|Y}}\), with eigenvalue 
		\(c = c_{i}^{(r)}(\mathfrak{p})\). It is a remarkable fact that this eigenvalue depends only on \(\lvert \mathfrak{p} \rvert = q^{d}\) and \(i\), but
		neither on \(\mathfrak{p}\) itself nor on \(Y'|Y\).
	\end{enumerate}	
\end{Remarks}

\begin{Example}
	We present the most simple example to which Theorem \ref{Theorem.Action-of-Hecke-operator-on-Delta-form-for-lattice-pair} applies, namely the case where as in
	\ref{Subsubsection.Simple-example-C-projective-line-infty-place-at-infinity}
	\begin{equation}
		A = \mathds{F}[T], \quad r = 2, \quad Y = A^{2} \quad \text{and} \quad Y' = T^{-1}Y.
	\end{equation}	
	Then \(\Delta^{Y'|Y} = \Delta_{(T)} = T^{-1}\Delta\), where \(\Delta = \Delta_{T}\) is the \enquote{usual} discriminant studied, e.g., in \cite{Gekeler88}. Let \(\mathfrak{p} = (n)\)
	be a prime different from \((T)\) (which is no restriction as \(\Delta_{T} = \Delta_{T-1}\)), of degree \(d = \deg \mathfrak{p}\). Let 
	\(\{ \mathbf{e}_{1}, \mathbf{e}_{2}\}\) be the standard basis of \(Y\). The \(c_{1}^{(2)}(\mathfrak{p}) = q^{d}+1\) elements \(Z\) of 
	\(\mathcal{L}(Y,\mathfrak{p}) \defeq \mathcal{L}(Y,\mathfrak{p},1)\) are the sublattices \(Z_{a}\) (\(a \in A \mid \deg a < d\)) and \(Z_{\infty}\), where
	\begin{equation}
		Z_{a} = A(\mathbf{e}_{1} + a\mathbf{e}_{2}) \oplus An \mathbf{e}_{2}, \quad Z_{\infty} = An\mathbf{e}_{1} \oplus A\mathbf{e}_{2}.
	\end{equation}
	By Corollary \ref{Corollary.Action-of-Hecke-operators-for-principal-ideals}, the operator \(T_{\mathfrak{p}}^{*} \defeq T_{\mathfrak{p},1}^{*}\) acts as
	\begin{equation}
		T_{\mathfrak{p}}^{*}(\Delta) = n^{e} \Delta^{q^{d}+1} \quad \text{with} \quad e = {-}(q^{2}-1)q^{d}.
	\end{equation}
	Put \(\boldsymbol{\omega} = (\omega, 1)\), where \(\omega \in \Omega = \Omega^{2} = C_{\infty} \smallsetminus K_{\infty}\), and write \(Z_{a, \boldsymbol{\omega}}\)
	for \((Z_{a})_{\boldsymbol{\omega}} = A(\omega + a) \oplus An\) and \(Z_{\infty, \boldsymbol{\omega}} = (Z_{\infty})_{\boldsymbol{\omega}} = An\omega \oplus A\).
	Then
	\begin{align*}
		\Delta(Z_{a, \boldsymbol{\omega}})	&= \Delta(nZ'_{a,\boldsymbol{\omega}}), 	&&\text{where } Z_{a,\boldsymbol{\omega}}' = n^{-1}Z_{a,\boldsymbol{\omega}} = A\left(\frac{\omega+a}{n}\right) \oplus A \\
											&= n^{1-q^{2}}\Delta(Z_{a,\boldsymbol{\omega}}') \\
											&= n^{1-q^{2}} \Delta\left( \frac{\omega + a}{n} \right) 
		\shortintertext{and}
			\Delta(Z_{\infty, \boldsymbol{\omega}})		&= \Delta(n\omega),	
	\end{align*}
	translating between functions on \(\{\text{lattices in }C_{\infty}\}\) and on \(\Omega\). Hence, as functions on \(\Omega\),
	\begin{align*}
		n^{(1-q^{2})q^{d}} \Delta^{q^{d}+1}(\omega)	&= T_{\mathfrak{p}}^{*}(Y_{\boldsymbol{\omega}}) \\
													&= \Delta(Z_{\infty, \boldsymbol{\omega}}) \prod_{a} \Delta(Z_{a, \boldsymbol{\omega}}) \\
													&= \Delta(n\omega) \prod_{a} n^{(1-q^{2})} \Delta\left(\frac{\omega +a}{n}\right) = n^{(1-q^{2})q^{d}} \Delta(n\omega) \prod_{a} \Delta\left( \frac{\omega + a}{n} \right)
	\end{align*}
	and finally the functional equation
	\begin{equation}\label{Eq.Delta-form-evaluated-on-bold-omega}
		\Delta^{q^{d}+1}(\omega) = \Delta(n\omega) \prod_{\substack{a \in A \\ \deg a < n}} \Delta \left( \frac{\omega + a}{n} \right)
	\end{equation}
	for \(\Delta\) found in \cite{Gekeler97} Corollary 2.11. Note that this formula was then a consequence of deep results about conditionally convergent Eisenstein series on the 
	Bruhat-Tits tree \(\mathcal{T} = \mathcal{BT}^{2}\) and their behavior under the Hecke operators. Here, the other way round, the far more general Theorem
	\ref{Theorem.Action-of-Hecke-operator-on-Delta-form-for-lattice-pair} implies certain Hecke eigenvalues, cf. Remarks \ref{Remarks.Essentiality-of-primality}(ii). The
	functional equation \eqref{Eq.Delta-form-evaluated-on-bold-omega} is analogous with the formula
	\begin{equation}\label{Eq.Analogy-of-Delta-form-to-elliptical-discriminant}
		\Delta^{p+1}(z) = \varepsilon(p) \Delta(pz) \prod_{0 \leq i < p} \Delta\left( \frac{z+i}{p} \right)
	\end{equation}
	for the classical elliptic discriminant \(\Delta(z)\) on the complex upper half-plane, where \(p \in \mathds{N}\) is a prime number and \(\varepsilon(p) = {-}1\) if
	\(p = 2\) and \(\varepsilon(p) = 1\) otherwise. It may be derived from Jacobi's product formula \(\Delta = (2\pi \imath)^{12} q \prod_{n \geq 1}(1 - q^{n})^{24}\);
	unfortunately, the author was unable to find a reference to the certainly well-known \eqref{Eq.Analogy-of-Delta-form-to-elliptical-discriminant}.
\end{Example}

\section{\(Z\)-functions of lattices in \(C_{\infty}\)} \label{Section.Z-functions-and-lattices}

In the whole section, \(\Lambda\) is a fixed \(A\)-lattice of rank \(r \in \mathds{N}\) in \(C_{\infty}\).

\subsection{} For \(N \in \mathds{Q}\), let
\begin{equation}
	\Lambda_{N} \defeq \{ \lambda \in \Lambda \mid \log \lambda \leq N \},
\end{equation}
a finite-dimensional \(\mathds{F}\)-vector space. There are various methods to encode the shape of \(\Lambda\), among which the \(\mathds{F}\)-spectrum 
\(\Spec_{\mathds{F}}(\Lambda)\) as in \cite{Gekeler22} 1.9, the growth function \(s \mapsto \dim_{\mathds{F}}(\Lambda_{s})\), or the \(Z\)-function \(Z_{\Lambda}\) of \(\Lambda\)
defined below, which is particularly well-suited for our present purposes. All these are equivalent.

\subsection{} Let \(S\) be an indeterminate, and choose a compatible system \(\{ S^{1/n} \mid n \in \mathds{N}\}\) of \(n\)-th roots of \(S\), that is,
\((S^{1/n})^{n/m} = S^{1/m}\) if \(m \mid n\). This is possible by Zorn's lemma. Define
\begin{equation}
	\mathds{Z}\{\{ S \}\} \defeq \bigcup_{n \in \mathds{N}} \mathds{Z}(( S^{1/n} ))
\end{equation}
as the ring of formal \textbf{Puiseux series in} \(S\) over \(\mathds{Z}\). Here, as usual, \(\mathds{Z}((X))\) is the ring of formal Laurent series in \(X\).

As \(\Lambda\) is finitely generated as an \(A\)-module, there exists a common denominator \(n \in \mathds{N}\) for the set 
\(\{ \log \lambda \mid 0 \neq \lambda \in \Lambda\} \subset \mathds{Q}\). Hence \(S^{\log \lambda}\) is a well-defined power of \(S^{1/n}\). We define the
\(Z\)-\textbf{function of } \(\Lambda\) as
\begin{equation}
	Z_{\Lambda}(S) = \sideset{}{^{\prime}} \sum_{\lambda \in \Lambda} S^{\log \lambda}
\end{equation}
and, more generally, for \(u \in K\Lambda\),
\[
	Z_{u, \Lambda}(S) = \sideset{}{^{\prime}} \sum_{\substack{\lambda \in K\Lambda \\ \lambda \equiv u~(\mathrm{mod } \Lambda)}} S^{\log \lambda}.
\]
Then \(Z_{0, \Lambda} = Z_{\Lambda}\), \(Z_{\Lambda}\) and \(Z_{u,\Lambda}\) are well-defined formal Laurent series in some \(S^{1/n}\), and therefore Puiseux series in
\(S\). Furthermore, \(Z_{\Lambda}\) and \(Z_{u,\Lambda}\) agree up to a finite number of terms, since almost all \(\lambda \in \Lambda\) satisfy \(\log \lambda > \log u\).

\subsection{} Writing \(\Lambda = Y_{\boldsymbol{\omega}}\), where \(\boldsymbol{\omega} \in \Psi\) is fixed, \(u \in K\Lambda\) equals \(\mathbf{u}\boldsymbol{\omega}\)
with \(\mathbf{u} \in V\), and the map
\begin{equation} \label{Eq.Distribution-on-frak-G}
	(\mathbf{u}, Y) \longmapsto Z_{\mathbf{u}, Y}(\boldsymbol{\omega}, S) \defeq Z_{\mathbf{u}\boldsymbol{\omega}, Y_{\boldsymbol{\omega}}}(S)
\end{equation}
satisfies the rules \eqref{Eq.Function-f-depends-only-on-n-modulo-Y} and \eqref{Eq.Evaluation-of-f-for-sublattice-Y'-and-Y}. Hence it defines a distribution on \(\mathfrak{Y}\)
with values in \(\mathds{Z}((S^{1/n}))\). (If we allow \(\boldsymbol{\omega}\) to vary over \(\Psi\), then the denominator \(n\) is unbounded, and we must replace
\(\mathds{Z}((S^{1/n}))\) with \(\mathds{Z}\{\{S\}\}\). This will however play no role in the present paper.) We also note the homogeneity property
\begin{equation} \label{Eq.Homogenity-property-of-distribution-on-frak-G}
	Z_{cu, c\Lambda}(S) = S^{\log c} Z_{u, \Lambda}(S)
\end{equation}
for \(0 \neq c \in C_{\infty}\).

Let \(\Lambda\) and \(u \in K\Lambda\) be given, and assume that \(n \in \mathds{N}\) is a common denominator of \(\log u\) and \(\log \lambda\) 
(\(0 \neq \lambda \in \Lambda\)). After some preparation, we will show:

\begin{Theorem} \label{Theorem.Distribution-on-frak-G-is-rational-function}
	\begin{enumerate}[label=\(\mathrm{(\roman*)}\)]
		\item \(Z_{u, \Lambda}(S) \in \mathds{Z}((S^{1/n}))\) is in fact a rational function in \(S^{1/n}\). More precisely,
		\begin{equation}
			Z_{u, \Lambda}(S)(1 - q_{\infty}^{r} S^{d_{\infty}}) \in \mathds{Z}[S^{1/n}, S^{{-}1/n}]
		\end{equation}
		is a Laurent polynomial with non-negative coefficients.
		\item \(Z_{u, \Lambda}(1) = {-}1\) if \(u \in \Lambda\) and \(0\) otherwise.
		\item For \(N \in \mathds{Q}\) large enough, we have the identity
		\begin{equation} \label{Eq.Theorem-Difference-of-sums-of-logarithms}
			\sideset{}{^{\prime}} \sum_{\substack{\lambda \in K\Lambda \\ \lambda \equiv u~(\mathrm{mod } \Lambda) \\ \log \lambda \leq N}} \log \lambda - \sideset{}{^{\prime}}\sum_{\lambda \in \Lambda_{N}} \log \lambda = Z_{u, \Lambda}'(1) - Z_{0, \Lambda}'(1),
		\end{equation}
		where \(Z_{*,*}' = \frac{\mathop{d}}{\mathop{dS}} Z_{**}\).
		\item For \(c \in C_{\infty}^{*}\),
		\begin{equation}
			Z_{cu, c\Lambda}'(1) \text{ equals } Z_{u, \Lambda}'(1) \text{ if } u \notin \Lambda \text{ and } Z_{u, \Lambda}'(1) - \log c \text{ if } u \in \Lambda.
		\end{equation}
	\end{enumerate}	
\end{Theorem}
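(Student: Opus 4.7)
The proof rests on a layer-by-layer analysis of $\Lambda$ at $\infty$, the key input being a geometric growth lemma for lattice point counts. For $u \in K\Lambda$, write $Z_{u,\Lambda}(S) = \sum_s a_s S^s$ where $a_s \defeq \#\{\lambda \in (u + \Lambda) \smallsetminus \{0\} \mid \log \lambda = s\}$. The first and most important step is to show that there exists $N_0 = N_0(u, \Lambda) \in \mathds{Q}$ such that for all $s \geq N_0$,
\[
a_{s + d_\infty} = q_\infty^r \cdot a_s.
\]
I would prove this by viewing $\Lambda$ as a discrete $A$-lattice of full rank $r$ inside the $r$-dimensional $K_\infty$-vector space $K_\infty\Lambda$: choosing an $A$-basis in the free case (or decomposing $\Lambda \cong \mathfrak{a}_1 \oplus \cdots \oplus \mathfrak{a}_r$ into fractional ideals via Steinitz's theorem otherwise), a Riemann--Roch-type count shows that layer sizes grow geometrically with ratio $q_\infty^r$ per additive shift of $d_\infty$ once one is past a finite ``boundary'' regime. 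The identity transfers to the translate $u + \Lambda$ by the ultrametric observation that $u + B_s = B_s$ as soon as $\log u \leq s$.

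Given this growth identity, part (i) follows immediately: the Puiseux series $(1 - q_\infty^r S^{d_\infty}) Z_{u, \Lambda}(S) = \sum_s (a_s - q_\infty^r a_{s - d_\infty}) S^s$ has vanishing coefficients for $s > N_0$ and hence is a Laurent polynomial in $S^{1/n}$. For part (ii) I would evaluate this Laurent polynomial at $S = 1$: the telescoping total $\sum_s (a_s - q_\infty^r a_{s - d_\infty})$, with careful accounting of the $\lambda = 0$ contribution (absent from $(u + \Lambda) \smallsetminus \{0\}$ precisely when $u \notin \Lambda$), evaluates to $q_\infty^r - 1$ when $u \in \Lambda$ and to $0$ when $u \notin \Lambda$; dividing by $1 - q_\infty^r$ yields $Z_{u, \Lambda}(1) = -1$ or $0$ respectively.

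Part (iii) follows from the observation that $Z_{u,\Lambda}(S) - Z_{0,\Lambda}(S) = \sum_s (a_s^u - a_s^0) S^s$ has coefficients vanishing for $s \geq N_0$ (the ``tails'' agree since ultrametric balls are insensitive to bounded translations), so this difference is itself a Laurent polynomial whose derivative at $S = 1$ equals the finite sum $\sum_s s(a_s^u - a_s^0)$, coinciding with the left-hand side of \eqref{Eq.Theorem-Difference-of-sums-of-logarithms} once $N$ is large enough to cover the support. Part (iv) is a direct consequence of the homogeneity \eqref{Eq.Homogenity-property-of-distribution-on-frak-G}: differentiating $Z_{cu, c\Lambda}(S) = S^{\log c} Z_{u, \Lambda}(S)$ and evaluating at $S = 1$ gives $Z'_{cu, c\Lambda}(1) = \log c \cdot Z_{u, \Lambda}(1) + Z'_{u, \Lambda}(1)$, and substituting $Z_{u, \Lambda}(1) = -1$ or $0$ from part (ii) yields the two declared cases.

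The main obstacle is the growth lemma for general, not necessarily free, projective $\Lambda$. I would handle non-free $\Lambda$ either by Steinitz decomposition and treating each fractional-ideal summand via Riemann--Roch, or more directly by working with the non-archimedean norm $\|\mathbf{x}\|_\omega = |\mathbf{x}\omega|$ on $V_\infty$ and exploiting the orthogonality structure of ultrametric $K_\infty$-vector spaces. A secondary delicate point is the careful sign accounting needed to verify the positivity claim on low-degree coefficients of the Laurent polynomial in part (i); once the growth lemma and this bookkeeping are under control, all remaining assertions are formal manipulations of Puiseux series.
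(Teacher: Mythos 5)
Your reduction of all four parts to a single geometric growth lemma is exactly the structure the paper uses: the paper states that lemma as the assertion that every rank-$r$ $A$-lattice $\Lambda$ in $C_\infty$ is \emph{ultimately quasi-periodic with quasi-period $d_\infty$} (uqp-$d_\infty$), and your derivations of (i)--(iv) from it (difference Laurent polynomial, telescoping sum for $Z_{u,\Lambda}(1)$, tail cancellation for (iii), differentiating the homogeneity for (iv)) coincide with the paper's argument. The substantive divergence is in the proof of the growth lemma itself, and there your sketch has a gap. Your option (a) --- Steinitz decomposition $\Lambda \cong \mathfrak{a}_1 \oplus \cdots \oplus \mathfrak{a}_r$ followed by Riemann--Roch on each summand --- doesn't work as stated, because the Steinitz isomorphism is one of abstract $A$-modules and carries no compatibility with the norm: the fractional-ideal summands need not be \emph{orthogonal} in $C_\infty$, so $\dim_{\mathds{F}}\Lambda_N$ does not split as a sum over summands. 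Your option (b), invoking orthogonality of finite-dimensional $K_\infty$-normed spaces, identifies the right structure but supplies only a $K_\infty$-orthogonal basis; the $A$-lattice $\Lambda$ need not decompose compatibly along it, so one still needs an argument (e.g.\ a rank-$r$ vector-bundle Riemann--Roch count with the norm balls at $\infty$ supplying the $\mathcal{O}_\infty$-lattice) that you have not spelled out. The paper sidesteps both issues by a cleaner device: fix a non-constant $a \in A$, view $\Lambda$ as a rank-$(r\deg a)$ lattice over the polynomial subring $A_0 = \mathds{F}[a]$, where successive-minimum bases \emph{do} give an orthogonal $A_0$-decomposition into rank-$1$ summands; the rational-function-field Riemann--Roch then gives uqp-$(\deg a)$, and varying $a$ plus $\gcd\{\deg a\} = d_\infty$ upgrades this to uqp-$d_\infty$. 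If you want to retain your route (b), you would need to carry out the vector-bundle Riemann--Roch argument explicitly, including the vanishing of $H^1$ for large degree; the paper's reduction to $\mathds{F}[a]$ avoids this entirely. One further small caution: the non-negativity of the Laurent-polynomial coefficients in (i), which you flag as ``secondary delicate sign accounting,'' is obtained in the paper not from the coefficients $a_s - q_\infty^r a_{s-d_\infty}$ directly but by exhibiting the tail $g_N$ as $(m_1 S^{x_1} + \cdots + m_s S^{x_s})(1 - q_\infty^r S^{d_\infty})^{-1}$ with manifestly non-negative $m_i$; your telescoped form would require a separate argument for the low-degree coefficients.
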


\begin{Remark-nn}
	In view of (i), \(Z_{u, \Lambda}\) and its derivative may be evaluated at \(S = 1\), i.e., at \(S^{1/n} = 1\).
\end{Remark-nn}

\subsection{} For a given rational number \(N\) and \(d \in \mathds{N}\), let
\[
	N < x_{1} < x_{2} < \cdots < x_{s} \leq N + d
\]
be the rational numbers that appear in the half-open interval \((N, N+d]\) as \(x_{i} = \log \lambda\) for some \(\lambda \in \Lambda\). Let \(m_{1}, \dots, m_{s}\) be 
the respective multiplicities, \(m_{i} \defeq \#\{ \lambda \in \Lambda \mid \log \lambda = x_{i} \}\). We call \(( (x_{1},m_{1}), \cdots, (x_{s}, m_{s}))\) the
\((N,d)\)-\textbf{spectrum} \(\Spec_{(N,d)}(\Lambda)\) \textbf{of} \(\Lambda\). We say that \(\Lambda\) is \textbf{ultimately quasi-periodic with quasi-period} \(d\)
(brief: uqp-\(d\)) if there is some \(N_{0} \in \mathds{Q}\) such that for all \(N \geq N_{0}\), the following relation holds:
\begin{multline}\label{Eq.Spec-of-lattice-Lambda}\stepcounter{subsubsection}%
	\text{If } \Spec_{(N,d)}(\Lambda) = ( (x_{1},m_{1}), \dots, (x_{s},m_{s}) ), \\ \text{ then } \Spec_{(N+d,d)}(\Lambda) = ( (x_{1}+d, q^{rd}m_{1}), \dots, (x_{s} + d, q^{rd}m_{s}) ).	
\end{multline}
If this is the case for some \(d\), then we call \(N_{0}\) sufficiently large for \(\Lambda\), written \(N_{0} \gg_{\Lambda} 0\), or simply \(N_{0} \gg 0\) if \(\Lambda\)
is fixed. The following are easy to verify.
\subsubsection{} If \(\Lambda\) is uqp-\(d\), then the occurring spectral values \(x_{1}, \dots, x_{s} > N_{0}\) are well-defined modulo \(d\mathds{Z}\) (however, the numbering
may depend on \(N\));

\subsubsection{} if \(\Lambda\) is upq-\(d\) for \(d = d_{1}, d_{2} \in \mathds{N}\), then it is uqp-\(g\) for \(g \defeq \gcd(d_{1},d_{2})\);

\subsubsection{}\label{Subsubsection.Characterisation-if-lattice-rank-r-1} if \(r = \rk_{A}(\Lambda) = 1\), then \(\Lambda\) is uqp-\(d\) for each \(d = \deg a\), where 
\(a \in A \smallsetminus \mathds{F}\);

\subsubsection{}\label{Subsubsection.Being.uqp-d-is-stable}% 
being uqp-\(d\) is stable under taking orthogonal direct sums of lattices in \(C_{\infty}\).

(The direct sum \(\Lambda = \oplus \Lambda^{(i)}\) of lattices \(\Lambda^{(i)}\) is \textbf{orthogonal}, written \(\bigperp \Lambda^{(i)}\), if for each 
\(\lambda \in K_{\infty}\Lambda\), \(\lambda = \sum \lambda_{i}\) with \(\lambda_{i} \in K_{\infty} \Lambda^{(i)}\), the rule 
\(\lvert \lambda \rvert = \max_{i} \lvert \lambda_{i} \rvert\) holds.)

Here \ref{Subsubsection.Characterisation-if-lattice-rank-r-1} is a consequence of the Riemann-Roch theorem, which implies that \(\ell(N) \defeq \dim_{\mathds{F}}(\Lambda_{N})\)
grows as \(\ell(N+d) = \ell(N) + d\) for \(N \gg 0\). Hence

\subsubsection{} in case \(r=1\), \(\Lambda\) is uqp-\(d_{\infty}\), as \(d_{\infty} = \gcd\{ \deg a \mid 0 \neq a \in A\}\).

\subsection{} We want to show that \(\Lambda\) is always uqp-\(d_{\infty}\). By the preceding, this was obvious if we could write \(\Lambda\) as an orthogonal sum
\(\Lambda^{(1)} \perp \cdots \perp \Lambda^{(r)}\) of one-dimensional \(\Lambda^{(i)}\). This is always possible if the base ring \(A\) is a polynomial ring
\(A = \mathds{F}[T]\), due to the existence of successive minimum bases (see \cite{Gekeler19} Sect. 3), but is delicate in general. So we reduce the wanted
assertion to the polynomial case.

\subsection{} Let \(a \in A\) be non-constant, of degree \(d \in \mathds{N}\), and put \(A_{0} \defeq \mathds{F}[a] \subset A\),
\(K_{0} \defeq \mathds{F}(a) = \Quot A_{0}\), \(K_{0, \infty} \defeq \mathds{F}((a^{-1})) =\) the completion of the rational function field \(K_{0}\)
at its infinite place \(\infty_{0}\). Consider the diagram of ring/field extensions
\begin{equation}\stepcounter{subsubsection}%
	\begin{tikzcd}
		A \ar[r, hook] \ar[d, dash]	& K \ar[r, hook] \ar[d, dash]	& K_{\infty} \ar[d, dash] \\
		A_{0} \ar[r, hook]			& K_{0} \ar[r, hook]				& K_{0, \infty}.	
	\end{tikzcd}
\end{equation}
It has the following properties:
\subsubsection{}\stepcounter{equation}% 
\enquote{\(\infty\)} is the unique place of \(K\) above the place \enquote{\(\infty_{0}\)} of \(K_{0}\);

\subsubsection{}\stepcounter{equation}% 
\(d = \rk_{A_{0}}(A) = [K : K_{0}] = [K_{\infty} : K_{0, \infty}] = e \cdot f\), where \(e = {-}v_{\infty}(a) = d/d_{\infty}\) is the ramification index
and \(f = d_{\infty}\) the residue class degree at \(\infty_{0}\).

Let \(\lvert \ldot \rvert_{0}\) be the absolute value on \(K_{0, \infty}\) with its natural normalization \(\lvert a \rvert_{0} = q\) and \(\log_{0}\) the logarithm
function \(\log_{0} x = \log_{q} \lvert x \rvert_{0}\) on \(C_{\infty}\). Then
\begin{equation}
	\log = d \cdot \log_{0}.
\end{equation}
Now \(\rk_{A_{0}}(\Lambda) = d \rk_{A}(\Lambda) = dr\), and the \(A_{0}\)-module \(\Lambda\) may be written as \(\Lambda = \bigperp_{1 \leq i \leq dr} \Lambda^{(i)}\) with
one-dimensional \(\Lambda^{(i)}\). We have
\begin{equation}
	\Lambda_{N} = \{ \lambda \in \Lambda \mid \log \lambda \leq N \} = \{ \lambda \in \Lambda \mid \log_{0} \lambda \leq N/d\} = \bigperp \Lambda_{N/d}^{(i)},
\end{equation}
as the sum of \(A_{0}\)-modules is orthogonal. Since \(\Lambda^{(i)}\) is uqp-1, \(\Lambda\) is uqp-1 as \(A_{0}\)-module, and then uqp-\(d\) as an \(A\)-module.

Together with the preceding considerations, we find:

\begin{Proposition}
	Each \(A\)-lattice \(\Lambda\) of rank \(r\) in \(C_{\infty}\) is ultimately quasi-periodic with quasi-period \(d_{\infty}\).
\end{Proposition}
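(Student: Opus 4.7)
The plan is to consolidate the preceding discussion: reduce to a polynomial subring via $A_0 = \mathds{F}[a]$ for a non-constant $a \in A$ of degree $d$, transfer the uqp property across this extension, and then take a $\gcd$ over all such $a$.

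Concretely, I would proceed as follows. Fix a non-constant $a \in A$ of degree $d$ and set $A_0 = \mathds{F}[a]$; viewed as an $A_0$-module, $\Lambda$ has rank $dr$. Since $A_0$ is a polynomial ring, the existence of successive minimum bases yields an orthogonal decomposition $\Lambda = \bigperp_{1 \leq i \leq dr} \Lambda^{(i)}$ into rank-one $A_0$-sublattices of $C_{\infty}$. By the rank-one case \ref{Subsubsection.Characterisation-if-lattice-rank-r-1}, each $\Lambda^{(i)}$ is uqp-$1$ as an $A_0$-module; by the closure \ref{Subsubsection.Being.uqp-d-is-stable} of the uqp property under orthogonal direct sums, $\Lambda$ itself is uqp-$1$ as an $A_0$-module.

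Next, I would translate this statement to the $A$-module side using $\log = d \cdot \log_0$: intervals of width $1$ in $\log_0$ become intervals of width $d$ in $\log$, and the $A_0$-multiplier $q^{\rk_{A_0}(\Lambda) \cdot 1} = q^{dr}$ coincides with the target multiplier $q^{r \cdot d}$ required by the $A$-module uqp-$d$ condition with $r = \rk_A(\Lambda)$. Hence $\Lambda$ is uqp-$d$ as an $A$-module. Since the argument applies to every non-constant $a \in A$, closure under $\gcd$ (\ref{Subsubsection.Characterisation-if-lattice-rank-r-1} was stated jointly with the gcd rule above) gives that $\Lambda$ is uqp-$g$ for $g = \gcd\{\deg a \mid 0 \neq a \in A\}$, which by the very definition of $d_{\infty}$ equals $d_{\infty}$.

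The only point requiring genuine care is the bookkeeping in the translation step: verifying that the rescaling from $\log_0$ to $\log$ and the rank change from $r$ to $dr$ conspire precisely to convert uqp-$1$ over $A_0$ into uqp-$d$ over $A$ (with the correct multiplier $q^{rd}$ rather than some power $q^{rd^2}$ that one might naively expect from iterating the uqp-$1$ relation $d$ times). Everything else is a direct invocation of the facts and decompositions already assembled.
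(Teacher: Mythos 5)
Your proof is correct and follows the same strategy as the paper: reduce to the polynomial subring $A_0 = \mathds{F}[a]$ where successive-minimum bases give an orthogonal decomposition into rank-one pieces, use the rank-one case plus stability under orthogonal sums to get uqp-$1$ over $A_0$, rescale via $\log = d \cdot \log_0$ to obtain uqp-$d$ over $A$, and finally take a $\gcd$ over all non-constant $a$. The bookkeeping you flag — that the $A_0$-multiplier $q^{\rk_{A_0}(\Lambda)\cdot 1} = q^{dr}$ matches the required $A$-multiplier $q^{rd}$ — is indeed the only genuinely delicate point, and you have it right.
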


\subsection{} Now we come to the

\begin{proof}[Proof of Theorem \ref{Theorem.Distribution-on-frak-G-is-rational-function}]
\begin{enumerate}[wide, label=(\roman*)]
	\item Let \(N_{0} \geq \log u\) be a rational number such that \eqref{Eq.Spec-of-lattice-Lambda} is fulfilled for \(N \geq N_{0}\) and \(d = d_{\infty}\). Write
	\begin{equation}
		Z_{u, \Lambda}(S) = f_{u,N}(S^{1/n}) + g_{N}(S^{1/n})
	\end{equation}
	with
	\begin{equation}
		f_{u,N}(S^{1/n}) \defeq \sideset{}{^{\prime}} \sum_{\substack{\lambda \in K\Lambda \\ \lambda \equiv u~(\mathrm{mod } \Lambda) \\ \log \lambda \leq N}} S^{\log \lambda},
	\end{equation}
	a Laurent polynomial in \(S^{1/n}\), and the tail \(g_{N}(S^{1/n})\), which doesn't depend on \(u\). Here \(N \geq N_{0}\), and \(n\) is a common denominator for the 
	appearing \(\log \lambda\).
	
	Let \(\Spec_{(N, d_{\infty})}(\Lambda) = ( (x_{1},m_{1}), \dots, (x_{s}, m_{s}) )\). Due to \eqref{Eq.Spec-of-lattice-Lambda}, we have
	\begin{equation}
		\begin{split}
			g_{N}(S^{1/n}) 	&= (m_{1}S^{x_{1}} + \cdots + m_{s}S^{x_{s}})(1 + q_{\infty}^{r}S^{d_{\infty}} + q^{2r}S^{2d_{\infty}} + \cdots) \\
							&= (m_{1}S^{x_{1}} + \cdots + m_{s}S^{x_{s}})(1 - q_{\infty}^{r} S^{d_{\infty}})^{-1},
		\end{split}
	\end{equation}
	which gives (i).
	\item Assume \fbox{\(u \in \Lambda\)}. Then
	\[
		Z_{\Lambda}(1) = f_{0,N}(1) + g_{N}(1) = \#(\Lambda_{N} \smallsetminus \{0\}) + \#(\Lambda_{N + d_{\infty}} \smallsetminus \Lambda_{N})(1 - q_{\infty}^{r})^{-1} = {-}1,
	\]
	as \(\dim_{\mathds{F}} \Lambda_{N + d_{\infty}} = rd_{\infty} + \dim_{\mathds{F}} \Lambda_{N}\).
	
	Let now \fbox{\(u \notin \Lambda\)}. Since \(\lambda \mapsto u + \lambda\) is a bijection of \(\Lambda_{N}\) with the set 
	\(\{ \lambda' \in K\Lambda \mid \lambda' \equiv u \pmod{\Lambda} \text{ and } \log \lambda' \leq N \}\), the Laurent polynomial \(f_{u,N}\) contains exactly one term 
	more than the corresponding polynomial \(f_{0,N}\) for \(Z_{0, \Lambda} = Z_{\Lambda}\), viz, the term \(S^{\log u}\). Hence 
	\(Z_{u, \Lambda}(1) = Z_{0, \Lambda}(1) + 1 = 0\).
	\item We have \(\frac{\mathop{d}}{\mathop{dS}}(S^{\log \lambda})|_{S=1} = \log \lambda\), and the stated formula \eqref{Eq.Theorem-Difference-of-sums-of-logarithms}
	follows from \(Z_{u, \Lambda}(S) - Z_{0, \Lambda}(S) = f_{u,N}(S^{1/n}) - f_{0,N}(S^{1/n})\) for \(N \gg 0\).
	\item follows from \eqref{Eq.Homogenity-property-of-distribution-on-frak-G} and (ii).
\end{enumerate}	
\end{proof}

\begin{Remark}
	By \eqref{Eq.Distribution-on-frak-G} for each \(\boldsymbol{\omega} \in \Psi\), the rule 
	\(f \colon (\mathbf{u}, Y) \mapsto Z_{\mathbf{u}\boldsymbol{\omega}, Y_{\boldsymbol{\omega}}}'(1)\) yields a \(\mathds{Q}\)-valued distribution. The formula
	\eqref{Eq.Theorem-Difference-of-sums-of-logarithms} in Theorem \ref{Theorem.Distribution-on-frak-G-is-rational-function} may be stated as: The derivative \(g = Df\)
	of the distribution \(f\) is motivated by \(m \colon V \smallsetminus \{0\} \to \mathds{Q}\), \(m(\mathbf{x}) = \log \mathbf{x}\boldsymbol{\omega}\).	
\end{Remark}

\section{\(1\)-units}\label{Section.1-units}

Having the distributions \((Z_{*,*})\) and \((Z_{*,*}'(1))\) with values in characteristic zero, we next define a similar distribution with values in pro-\(p\)-groups.

\subsection{}\label{Subsection.Choices-for-1-units}% 
We make the following choices:
\begin{itemize}
	\item \(\pi \in K\), a uniformizer at \(\infty\);
	\item for each \(n \in \mathds{N}\), an \(n\)-th root \(\pi^{1/n} \in C_{\infty}\) of \(\pi\), and such that for \(m \mid n\), the rule
	\((\pi^{1/n})^{n/m} = \pi^{1/m}\) is satisfied.
\end{itemize}
As with roots \(S^{1/n}\) of \(S\) in the last section, this requires Zorn's lemma in the general case, but as before, we will be working only on a finite level \(n\).

Then arbitrary powers \(\pi^{x}\) (\(x \in \mathds{Q}\)) are defined, with the usual rules holding. As the group \(\log(C_{\infty}^{*})\) equals \(\mathds{Q}\), we may 
split 
\begin{equation}
	\begin{split}
		C^{*}_{\infty}	&\overset{\cong}{\longrightarrow} \pi^{\mathds{Q}} \times U(C_{\infty}), \\
					z	&\longmapsto (\pi^{{-}\log z/d_{\infty}}, z\pi^{\log z/d_{\infty}})
	\end{split}
\end{equation}
where \(U(C_{\infty}) = \{ z \in C_{\infty} \mid \lvert z \rvert = 1\}\) is the unit sphere in \(C_{\infty}\). For each \(z \in U(C_{\infty})\), there exists a unique
root of unity \(\sgn(z)\) such that \(\sgn(z)^{-1}z\) is congruent to 1 modulo the valuation ideal of \(C_{\infty}\), i.e., such that \(\lvert \sgn(z)^{-1}z-1 \rvert < 1\).
This gives a surjective homomorphism
\begin{equation}
	\sgn \colon U(C_{\infty}) \longrightarrow \mu(C_{\infty})
\end{equation}
to the group \(\mu(C_{\infty})\) of roots of unity in \(C_{\infty}\). Let \(U^{(1)}(C_{\infty})\) be its kernel, the \textbf{group of} \(1\)\textbf{-units} of \(C_{\infty}\).
We extend \(\sgn\) to \(C_{\infty}^{*}\) by decreeing \(\sgn(\pi^{x}) = 1\) for each \(x \in \mathds{Q}\). Together, we find a decomposition
\begin{equation}\label{Eq.Decomposition-of-C-infty-star}\setcounter{subsubsection}{3}%
	\begin{split}
		C_{\infty}^{*}		&\overset{\cong}{\longrightarrow} \mathds{Q} \times \mu(C_{\infty}) \times U^{(1)}(C_{\infty}) \\
				z			&\longmapsto ({-}\log z/d_{\infty}, \sgn(z), \langle z \rangle),
	\end{split}
\end{equation}
where \(\langle z \rangle \defeq z \cdot \pi^{\log z/d_{\infty}} \sgn(z)^{-1}\) is the \(1\)-unit part of \(z\). Note that the component \({-}\log z/d_{\infty}\) is
intrinsic, while \(\sgn(z)\) and \(\langle z \rangle\) depend on the choice of \(\pi\) and its roots. We also observe
\subsubsection{} The decomposition \eqref{Eq.Decomposition-of-C-infty-star} imposes the discrete topology to the factors \(\mathds{Q}\) and \(\mu(C_{\infty})\).

\subsection{}\label{Subsection.Fixation-of-A-lattice-Gamma-of-rank-r}%
In what follows, we fix an \(A\)-lattice \(\Lambda \subset C_{\infty}\) of rank \(r\), and put for rational numbers \(N < N'\)
\begin{equation}
	\Lambda_{N,N'} = \Lambda_{N'} \smallsetminus \Lambda_{N}.
\end{equation}
For some \(a \in A\) of degree \(d > 0\), choose a section \(s\) of the \(\mathds{F}\)-linear map \(\Lambda \to \Lambda/a\Lambda\), and let \(R \defeq s(\Lambda/a\Lambda)\).
We assume that \(N \gg_{\Lambda} 0\) and also 
\begin{equation}
	N \geq C_{0} \defeq \max_{x \in R} \log x.	
\end{equation}
Then
\begin{equation}\label{Eq.Bijection-Lambda-N-N+d-times-R}
	\begin{split}
		\Lambda_{N,N+d} \times R		&\overset{\cong}{\longrightarrow} \Lambda_{N+d,N+2d} \\
			(\lambda, x)				&\longmapsto a\lambda + x
	\end{split}
\end{equation}
is bijective. Due to our assumptions,
\begin{equation}\label{Eq.Identities-for-logarithm-and-sign}
	\log(a\lambda + x) = \log a + \log \lambda \quad \text{and} \quad \sgn(a\lambda + x) = \sgn(a)\sgn(\lambda).
\end{equation}
Let 
\begin{equation}
	H(X) \defeq \prod_{x \in R} (X-x) = X^{q^{rd}} + h_{rd-1}X^{q^{rd-1}} + \cdots + h_{0}X
\end{equation}
be the \(\mathds{F}\)-linear polynomial corresponding to \(R\). Its coefficients, being composed of elementary symmetric functions in the \(x\), satisfy
\begin{equation}
	\log h_{i} \leq (q^{rd} - q^{i})C_{0} \quad (0 \leq i < rd).
\end{equation}
For \(\lambda \in \Lambda_{N,N+d}\) we find with easy (and omitted) estimates:
\begin{equation}
	H(a\lambda)/(a\lambda)^{q^{rd}} = 1 + O(q^{{-}N})
\end{equation}
(that is, there exists \(C_{1} > 0\) such that the error term is \(\leq C_{1}q^{{-}N}\).) Therefore, by \eqref{Eq.Identities-for-logarithm-and-sign},
\begin{equation}\label{Eq.Product-of-langle-alambda-x-rangle}
	\prod_{x \in R} \langle a\lambda + x \rangle = \sgn(a\lambda)^{{-}q^{rd}} \pi^{q^{rd}\log(a\lambda)/d_{\infty}} H(a\lambda) = \langle a\lambda \rangle^{q^{rd}}(1 + O(q^{{-}N})).
\end{equation}
Put 
\begin{equation}
	U_{\Lambda, N} \defeq \sideset{}{^{\prime}} \prod_{\lambda \in \Lambda_{N}} \langle \lambda \rangle, \qquad U_{\Lambda, N, N'} \defeq \prod_{\lambda \in \Lambda_{N,N'}} \langle \lambda \rangle, \quad \text{and} \quad U_{u, \Lambda, N} \defeq \sideset{}{^{\prime}} \prod_{\substack{\lambda \in K\Lambda \\ \lambda \equiv u~(\mathrm{mod } \Lambda) \\ \log \lambda \leq N}} \langle \lambda \rangle.
\end{equation}
Taking the product of \eqref{Eq.Product-of-langle-alambda-x-rangle} over \(\lambda \in \Lambda_{N,N+d}\) yields 
\begin{equation}
	U_{\Lambda, N+d, N+2d} = \langle a \rangle^{q^{rd}\#(\Lambda_{N,N+d})}(U_{\Lambda, N, N+d})^{q^{rd}}(1 + O(q^{{-}N})).
\end{equation}
Now, as \(\langle a \rangle\) and \(U_{\Lambda, N}\) are \(1\)-units, we see that 
\begin{equation}\label{Eq.Lim-U-Lambda-N+Kd-N+(k+1)d-equals-1}
	\lim_{k \to \infty} U_{\Lambda, N+kd, N+(k+1)d} = 1
\end{equation}
and thus \(\lim_{k \to \infty} U_{\Lambda, N+kd}\) exists. One easily shows that neither the convergence nor the value of the limit depends on the shape \(N+kd\) of the
sequence approaching \(\infty\). Thus finally
\begin{equation}
	U_{\Lambda} \defeq \lim_{N \to \infty} U_{\Lambda, N} \in U^{(1)}(C_{\infty})
\end{equation}
is well-defined. With the same reasoning, we get for each \(u \in K\Lambda\) the existence of the limit
\begin{equation}
	U_{u, \Lambda} \defeq \lim_{N \to \infty} U_{u,\Lambda, N}
\end{equation}
as an element of \(U^{(1)}(C_{\infty})\).

\begin{Proposition}\label{Proposition.Existence-of-limit}
	\begin{enumerate}[label=\(\mathrm{(\roman*)}\)]
		\item For each \(u \in K\Lambda\), the limit \(U_{u,\Lambda} = \lim_{N \to \infty} U_{u, \Lambda, N}\) exists.
		\item \(U_{u,\Lambda}\) depends only on the class of \(u\) modulo \(\Lambda\).
		\item For a lattice pair \(\Lambda \subset \Lambda'\subset K\Lambda\) and \(v \in K\Lambda\), the distribution relation
		\[
			\prod_{\substack{u \in K\Lambda \\ u \equiv v~(\mathrm{mod } \Lambda')}} U_{u, \Lambda} = U_{v, \Lambda'} \quad \text{holds.}
		\]
		\item If \(c \in C_{\infty}^{*}\) then 
		\[
			U_{cu, c\Lambda} = \begin{cases} U_{u, \Lambda}	&(u \notin \Lambda) \\ \langle c \rangle^{-1}U_{u,\Lambda}	&(u \in \Lambda). \end{cases}
		\]
	\end{enumerate}	
\end{Proposition}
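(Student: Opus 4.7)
The four assertions are best proved in order, leveraging the computation already carried out in the text for $U_{\Lambda,N}$ (i.e. the case $u=0$).

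\textbf{(i)} The plan is to reuse the bijection \eqref{Eq.Bijection-Lambda-N-N+d-times-R}, but shifted by $u$: for $N$ large enough (in particular $N > \log u + C_{0}$ and $N \gg_{\Lambda} 0$), the map $(\mu,x)\mapsto u+a\mu+x$ establishes a bijection from $\Lambda_{N,N+d}\times R$ onto $\{\lambda' \in K\Lambda \mid \lambda'\equiv u\,(\mathrm{mod}\,\Lambda),\ N+d<\log \lambda'\leq N+2d\}$. The identity $H(u+a\mu)=H(u)+H(a\mu)$, which follows from $\mathds{F}$-linearity of $H$, combined with $\log H(u)\leq (q^{rd}-1)\log u$ bounded and $|(a\mu)^{q^{rd}}|\to\infty$, yields the analogue of the key estimate, namely $H(u+a\mu)/(a\mu)^{q^{rd}} = 1+O(q^{-N})$. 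Hence
\[
\prod_{x\in R}\langle u+a\mu+x\rangle = \langle a\mu\rangle^{q^{rd}}(1+O(q^{-N})),
\]
and taking the product over $\mu\in\Lambda_{N,N+d}$ gives exactly the recursion
\[
 U_{u,\Lambda,N+d,N+2d} = \langle a\rangle^{q^{rd}\#\Lambda_{N,N+d}}\bigl(U_{\Lambda,N,N+d}\bigr)^{q^{rd}}\bigl(1+O(q^{-N})\bigr).
\]
The same argument as in \eqref{Eq.Lim-U-Lambda-N+Kd-N+(k+1)d-equals-1} then shows that $U_{u,\Lambda,N+kd}$ is Cauchy in $U^{(1)}(C_{\infty})$ and that the limit is independent of the sequence chosen.

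\textbf{(ii)} is immediate: if $u'=u+\lambda_{0}$ with $\lambda_{0}\in\Lambda$, the coset $u'+\Lambda$ equals $u+\Lambda$, so the defining products $U_{u,\Lambda,N}$ and $U_{u',\Lambda,N}$ range over the very same sets (for $N$ large enough), and the limits coincide.

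\textbf{(iii)} follows by partitioning the index set for the truncated product. The cosets of $\Lambda$ in $v+\Lambda'$ are represented by finitely many $u\in\Lambda'/\Lambda$. For $N$ large,
\[
\prod_{\substack{u\in K\Lambda/\Lambda\\ u\equiv v\,(\mathrm{mod}\,\Lambda')}} U_{u,\Lambda,N} \;=\; \sideset{}{^{\prime}}\prod_{\substack{\lambda\in K\Lambda\\\lambda\equiv v\,(\mathrm{mod}\,\Lambda')\\ \log\lambda\leq N}}\langle\lambda\rangle \;=\; U_{v,\Lambda',N},
\]
and (iii) results from passing to the limit in this finite product.

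\textbf{(iv)} is proved via the substitution $\lambda''=c\lambda$ in the defining product of $U_{cu,c\Lambda,N}$, using multiplicativity $\langle c\lambda\rangle=\langle c\rangle\langle\lambda\rangle$. One obtains $U_{cu,c\Lambda,N}=\langle c\rangle^{m_{N}}\,U_{u,\Lambda,N-\log c}$, where $m_{N}$ counts the $\lambda$ appearing. A small bijection argument shows $m_{N}=\#\Lambda_{N-\log c}$ if $u\notin\Lambda$, and $m_{N}=\#\Lambda_{N-\log c}-1$ if $u\in\Lambda$ (the missing element being $-u$, resp.\ $0$). Both cases reduce the claim to
\[
\lim_{N\to\infty}\langle c\rangle^{\#\Lambda_{N}}=1\qquad\text{in }U^{(1)}(C_{\infty}).
\]
But $\#\Lambda_{N}=q^{\dim_{\mathds{F}}\Lambda_{N}}$ is a power of $p$, and $\dim_{\mathds{F}}\Lambda_{N}\to\infty$ by Proposition~3.7, so raising the $1$-unit $\langle c\rangle$ to this power gives $1$ in the limit (in characteristic $p$, $(1+\gamma)^{p^{k}}=1+\gamma^{p^{k}}\to 1$).

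The main obstacle is step (i), where the $\mathds{F}$-linearity of $H$ has to be used carefully to handle the shift by $u$; once the same error estimate is in place, the Cauchy argument already established in the text carries over verbatim. Step (iv) is conceptually easy but requires the observation that the counting discrepancy between the cases $u\in\Lambda$ and $u\notin\Lambda$ accounts precisely for the extra factor $\langle c\rangle^{-1}$.
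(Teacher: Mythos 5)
Your proof is correct and follows the same route the paper takes: part (i) is established by redoing the $U_{\Lambda,N}$ Cauchy estimate of Subsection 5.2 for a nonzero coset representative $u$, and parts (ii)–(iv) are the "easy consequences" the paper alludes to, with (iv) resting on the same observation that $\langle c\rangle^{\#\Lambda_N}\to 1$ because the exponent is a growing power of $p$. You supply more detail than the paper (which dismisses (i) with "the same reasoning"), and the shifted bijection $(\mu,x)\mapsto u+a\mu+x$, together with $\mathds{F}$-linearity of $H$ and the boundedness of $H(u)$, is precisely what makes that phrase honest.
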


\begin{proof}
	(i) has been shown, and (ii) and (iii) are then easy consequences. As to (iv): We have \(U_{cu, c\Lambda} = \lim_{N \to \infty} \langle c \rangle^{i(N)} U_{u, \Lambda}\)
	with \(i(N) = \#(\Lambda_{N})\) in case \(u \notin \Lambda\) and \(i(N) = \#(\Lambda_{N} \smallsetminus \{0\})\) in case \(u \in \Lambda\). Now
	\(\langle c \rangle^{\#(\Lambda_{N})}\) converges to 1, as \(\#(\Lambda_{N})\) is a power of \(q\) growing with \(N\).
\end{proof}

The proposition assures that for given \(\boldsymbol{\omega} \in \Psi\), the rule 
\(f \colon (\mathbf{u}, Y) \mapsto U_{\mathbf{u}\boldsymbol{\omega}, Y_{\boldsymbol{\omega}}}\) is a distribution on \(\mathfrak{Y}\) with values in \(U^{(1)}(C_{\infty})\).
It is motivated by \(m \colon V \smallsetminus \{0\} \to U^{(1)}(C_{\infty})\), \(m(\mathbf{x}) = \langle \mathbf{x} \boldsymbol{\omega} \rangle\).

\section{Roots of unity}\label{Section.Roots-of-unity}

We would like to dispose of a similar distribution with values in \(\mu(C_{\infty})\) motivated by \(\sgn(\lambda)\), where 
\(0 \neq \lambda \in \Lambda = Y_{\boldsymbol{\omega}}\). This fails however, due to the lacking of an analogue of \eqref{Eq.Lim-U-Lambda-N+Kd-N+(k+1)d-equals-1}
for the sign function \(\sgn\).

\subsection{} For each subset \(S\) of \(C_{\infty}\), define
\begin{equation}
	\sgn(S) \defeq \text{the subgroup of \(\mu(C_{\infty})\) generated by \(\sgn(x)\), where \(0 \neq x \in S\)}.
\end{equation}
If \(S\) is finite, then put
\begin{equation}
	\varepsilon(S) \defeq \sideset{}{^{\prime}} \prod_{x \in S} \sgn(x).
\end{equation}
Fix an \(A\)-lattice \(\Lambda \subset C_{\infty}\) of rank \(r\). We first observe:

\begin{Proposition}
	\(\sgn(\Lambda)\) is finite.
\end{Proposition}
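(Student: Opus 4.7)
My plan is to leverage the bijection \eqref{Eq.Bijection-Lambda-N-N+d-times-R} together with the sign identity \eqref{Eq.Identities-for-logarithm-and-sign} from the preceding analysis of $1$-units, to reduce the problem to a finite amount of data. I would first fix any non-constant $a \in A$ of degree $d > 0$, choose a section $R$ of the natural map $\Lambda \to \Lambda/a\Lambda$, and pick a rational $N$ large enough so that both $N \gg_{\Lambda} 0$ and $N \geq C_{0} \defeq \max_{x \in R} \log x$ hold; this ensures that both \eqref{Eq.Bijection-Lambda-N-N+d-times-R} and \eqref{Eq.Identities-for-logarithm-and-sign} are in force.

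Next, combining the bijectivity of $(\lambda, x) \mapsto a\lambda + x$ from $\Lambda_{N, N+d} \times R$ onto $\Lambda_{N+d, N+2d}$ with the pointwise identity $\sgn(a\lambda + x) = \sgn(a)\sgn(\lambda)$ yields
\[
	\sgn(\Lambda_{N+d, N+2d}) = \sgn(a) \cdot \sgn(\Lambda_{N, N+d}),
\]
and iterating gives $\sgn(\Lambda_{N+kd, N+(k+1)d}) = \sgn(a)^{k} \cdot \sgn(\Lambda_{N, N+d})$ for every $k \geq 0$. Since
\[
	\Lambda \smallsetminus \{0\} = \big( \Lambda_{N} \smallsetminus \{0\} \big) \cup \bigcup_{k \geq 0} \Lambda_{N+kd, N+(k+1)d},
\]
the generating set for $\sgn(\Lambda)$ is contained in $\sgn(\Lambda_{N} \smallsetminus \{0\}) \cup \langle \sgn(a) \rangle \cdot \sgn(\Lambda_{N, N+d})$.

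Finally, I would observe that each of these pieces is finite: $\Lambda_{N}$ and $\Lambda_{N, N+d}$ are finite sets by discreteness of $\Lambda$, and $\sgn(a)$ lies in $\mu(K_{\infty}) = \mathds{F}_{q_{\infty}}^{*}$, which is cyclic of order $w = q_{\infty} - 1$. Hence $\sgn(\Lambda)$ is generated by a finite subset of the torsion abelian group $\mu(C_{\infty})$, so the generated subgroup is itself finite. I do not anticipate any substantial obstacle: the essential analytical content has already been carried out in Section \ref{Section.1-units}, and the only point needing attention is that a single $N$ can be chosen simultaneously satisfying $N \gg_{\Lambda} 0$ and $N \geq C_{0}$, which is immediate.
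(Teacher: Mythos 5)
Your proof follows the paper's own argument: both use the bijection \eqref{Eq.Bijection-Lambda-N-N+d-times-R} together with the multiplicativity \eqref{Eq.Identities-for-logarithm-and-sign} to relate $\sgn$ on the annulus $\Lambda_{N+d,N+2d}$ to $\sgn$ on $\Lambda_{N,N+d}$ and $\sgn(a)$, and then conclude by induction over annuli and the finiteness of $\Lambda_{N+d}$ and $\mu_w$. The only quibble is notational: since the paper defines $\sgn(S)$ for a set $S$ as the \emph{subgroup} of $\mu(C_\infty)$ generated by its element-wise signs, the displayed equality $\sgn(\Lambda_{N+d,N+2d}) = \sgn(a)\cdot\sgn(\Lambda_{N,N+d})$ should be an inclusion of the left-hand subgroup in the subgroup $\langle\sgn(a)\rangle\cdot\sgn(\Lambda_{N,N+d})$ (or, equivalently, one should work with the set of values rather than the generated subgroup), but this does not affect the finiteness conclusion.
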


\begin{proof}
	Let \(a \in A\) of degree \(d > 0\) and \(N \gg 0\) such that \eqref{Eq.Bijection-Lambda-N-N+d-times-R} is fulfilled. Proceeding as in 
	\ref{Subsection.Fixation-of-A-lattice-Gamma-of-rank-r}, we find that \(\sgn(\Delta_{N+2d})\) is generated by \(\sgn(\Lambda_{N+d})\) and \(\sgn(A) = \mu_{w}\),
	the group of \(w\)-th root of unity, \(w = q_{\infty} - 1\). Now use induction.
\end{proof}

\subsection{} In what follows, we assume that \(N \in \mathds{Q}\) is large enough with respect to the lattices \(\Lambda \subset \Lambda'\), the choices of \(a \in A\)
of degree \(d > 0\), and of the system \(R\) of representatives for \(\Lambda/a\Lambda\), and w.r.t. \(\log u\), so that, e.g., \eqref{Eq.Bijection-Lambda-N-N+d-times-R}
and \eqref{Eq.Identities-for-logarithm-and-sign} apply.

\begin{Theorem}\label{Theorem.On-roots-of-unity}
	\begin{enumerate}[label=\(\mathrm{(\roman*)}\)]
		\item The root of unity
		\begin{equation}\label{Eq.Theorem-independence-of-root-of-unity}
			\varepsilon^{\Lambda} \defeq \varepsilon(\Lambda_{N})^{q_{\infty}^{r}}/\varepsilon(\Lambda_{N+d_{\infty}})
		\end{equation}
		is independent of \(N \gg 0\) and therefore an invariant of \(\Lambda\). For a multiple \(d\) of \(d_{\infty}\),
		\begin{equation}\label{Eq.Theorem-identitiy-for-roots-of-unity}
			\varepsilon(\Lambda_{N})^{q^{rd}}/\varepsilon(\Lambda_{N+d}) = (\varepsilon^{\Lambda})^{(q^{rd}-1)/(q_{\infty}^{r}-1)}
		\end{equation}
		holds. Further, for \(0 \neq c \in K\),
		\begin{equation}\label{Eq.Theorem-scalar-invariance-of-root-of-unity}
			\varepsilon^{c\Lambda} = \varepsilon^{\Lambda}.
		\end{equation}
		\item Define for \(u \in K\Lambda \smallsetminus \Lambda\)
		\begin{equation}
			\varepsilon_{u}^{\Lambda} \defeq \varepsilon_{u,N}^{\Lambda} \defeq \prod_{\substack{\lambda \in K\Lambda \\ \lambda \equiv u~(\mathrm{mod } \Lambda) \\ \log \lambda \leq N}} \sgn(\lambda)/\varepsilon(\Lambda_{N}).
		\end{equation}
		This is independent of \(N \gg 0\), and an invariant of the class of \(u \pmod{\Lambda}\) and \(\Lambda\). For \(0 \neq c \in K\), 
		\begin{equation}\label{Eq.Theorem-sign-dependence-by-index-root-of-unity}
			\varepsilon_{cu}^{c\Lambda} = \sgn(c) \varepsilon_{u}^{\Lambda}.
		\end{equation}
		\item Let \(\Lambda \subset \Lambda' \subset K\Lambda\) with another lattice \(\Lambda'\) of rank \(r\), and define
		\begin{equation}\label{Eq.Theorem-definition-root-for-lattice-pair}
			\varepsilon^{\Lambda'|\Lambda} \defeq \Big( \sideset{}{^{\prime}} \prod_{u \in \Lambda'/\Lambda} \varepsilon_{u}^{\Lambda} \Big)^{-1} = \varepsilon(\Lambda_{N})^{[\Lambda':\Lambda]}( \varepsilon(\Lambda_{N}') )^{-1},
		\end{equation}
		where \(N\) is large enough. For \(v \in K\Lambda \smallsetminus \Lambda'\), we have
		\begin{equation}\label{Eq.Theorem-product-formula-for-roots}
			\prod_{\substack{u \in K\Lambda/\Lambda \\ u \equiv v~(\mathrm{mod } \Lambda')}} \varepsilon_{u}^{\Lambda} \cdot \varepsilon^{\Lambda'|\Lambda} = \varepsilon_{v}^{\Lambda'}.
		\end{equation}
		\item Let \(w_{r} \defeq q_{\infty}^{r} - 1\). Then
		\begin{equation}\label{Eq.Theorem-index-formula-for-root-of-lattice-pair}
			(\varepsilon^{\Lambda'|\Lambda})^{w_{r}} = (\varepsilon^{\Lambda})^{[\Lambda' : \Lambda]}/\varepsilon^{\Lambda'}.
		\end{equation}
	\end{enumerate}	
\end{Theorem}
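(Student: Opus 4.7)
The crux is part (i), which produces the invariant $\varepsilon^\Lambda$; parts (ii)--(iv) then reduce to formal manipulations of sign products together with (i).

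For (i), I would apply the bijection $\Lambda_{N,N+d}\times R \cong \Lambda_{N+d,N+2d}$ of \eqref{Eq.Bijection-Lambda-N-N+d-times-R} combined with $\sgn(a\lambda+x)=\sgn(a)\sgn(\lambda)$ from \eqref{Eq.Identities-for-logarithm-and-sign} (using $a\in A$ of degree $d$ a multiple of $d_\infty$, and $R$ a system of representatives for $\Lambda/a\Lambda$) to obtain
\[
\varepsilon(\Lambda_{N+d,N+2d})\;=\;\sgn(a)^{\#\Lambda_{N,N+d}\cdot q^{rd}}\,\varepsilon(\Lambda_{N,N+d})^{q^{rd}}.
\]
For $N\gg 0$, uqp-$d_\infty$ gives $\#\Lambda_{N,N+d}=(q^{rd}-1)\#\Lambda_N$, and $d_\infty\mid d$ forces $w\mid q^{rd}-1$; since $\sgn(a)\in\mu_w$, the sign factor collapses and we get $\varepsilon(\Lambda_{N+d,N+2d})=\varepsilon(\Lambda_{N,N+d})^{q^{rd}}$. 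Iterating this recursion makes $\eta^{(d)}_N:=\varepsilon(\Lambda_N)^{q^{rd}}/\varepsilon(\Lambda_{N+d})$ invariant under $N\mapsto N+d$, and applying the same bijection argument at individual spectrum values (with the same $w$-divisibility) annihilates the local jumps of $\eta^{(d)}_N$, giving full $N$-independence. Choosing two degrees $d_1,d_2$ of elements of $A$ with $\gcd(d_1,d_2)=d_\infty$ (which exist by the definition of $d_\infty$ in \ref{Subsection.First-introduction-of-notation}) and using that $\gcd\!\bigl((q^{rd_1}-1)/w_r,(q^{rd_2}-1)/w_r\bigr)=1$, a Bezout combination of $\eta^{(d_1)}_N$ and $\eta^{(d_2)}_N$ extracts a well-defined $\varepsilon^\Lambda$ with $\eta^{(d)}_N=(\varepsilon^\Lambda)^{(q^{rd}-1)/w_r}$ for every admissible $d$, yielding \eqref{Eq.Theorem-independence-of-root-of-unity} and \eqref{Eq.Theorem-identitiy-for-roots-of-unity}. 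The scaling \eqref{Eq.Theorem-scalar-invariance-of-root-of-unity} is direct: $\varepsilon((c\Lambda)_N)=\sgn(c)^{\#\Lambda_{N-\log c}-1}\varepsilon(\Lambda_{N-\log c})$ for $c\in K^*$, and the accumulated $\sgn(c)$-exponent in $\varepsilon^{c\Lambda}/\varepsilon^\Lambda$ works out to $-w_r$, which is divisible by $w=\ord\sgn(c)$.

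For (ii), the map $\mu\mapsto u+\mu$ is a sign-preserving bijection from $\Lambda_{N,N'}$ onto $\{\lambda\in K\Lambda:\lambda\equiv u~(\mathrm{mod}\ \Lambda),\,N<\log\lambda\leq N'\}$ whenever $N>\log u$ (sign-preserving because $\log\mu>\log u$ forces $\sgn(u+\mu)=\sgn(\mu)$), so numerator and denominator of $\varepsilon_{u,N}^\Lambda$ are both multiplied by $\varepsilon(\Lambda_{N,N'})$ when $N$ grows; this yields $N$-independence and invariance under $u\mapsto u+y$ for $y\in\Lambda$. Formula \eqref{Eq.Theorem-sign-dependence-by-index-root-of-unity} follows by pulling $\sgn(c)$ out of every factor in $\varepsilon_{cu,N}^{c\Lambda}$: the numerator has $\#\Lambda_{N-\log c}$ factors of $\sgn(c)$ (no exclusion, since $u\notin\Lambda$) while the denominator only has $\#\Lambda_{N-\log c}-1$ (excluding $0\in c\Lambda$), giving a surplus factor $\sgn(c)$. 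For (iii), expanding $\prod'_{u\in\Lambda'/\Lambda}\varepsilon_u^\Lambda$ as a double product, the combined inner products range over $\Lambda'_N\smallsetminus\Lambda_N$ and so evaluate to $\varepsilon(\Lambda'_N)/\varepsilon(\Lambda_N)$, while the $[\Lambda':\Lambda]-1$ outer denominators $\varepsilon(\Lambda_N)$ absorb to give the second identity in \eqref{Eq.Theorem-definition-root-for-lattice-pair}; the product formula \eqref{Eq.Theorem-product-formula-for-roots} is an immediate re-indexing of the same double product. Finally, for (iv), substituting $\varepsilon(\Lambda_{N+d_\infty})=\varepsilon(\Lambda_N)^{q_\infty^r}/\varepsilon^\Lambda$ (and its analogue for $\Lambda'$) from (i) into the second identity of \eqref{Eq.Theorem-definition-root-for-lattice-pair} evaluated at $N+d_\infty$ and comparing with its value at $N$, one gets $\varepsilon^{\Lambda'|\Lambda}=(\varepsilon^{\Lambda'|\Lambda})^{q_\infty^r}\cdot\varepsilon^{\Lambda'}/(\varepsilon^\Lambda)^{[\Lambda':\Lambda]}$, which rearranges to \eqref{Eq.Theorem-index-formula-for-root-of-lattice-pair}.

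The delicate point is in (i): upgrading the periodicity of $\eta^{(d)}_N$ (with period $d$, possibly much larger than $d_\infty$) to genuine $N$-independence requires careful tracking of the $w$-divisibility of spectrum multiplicities, and the Bezout step across two incommensurate degrees $d_1,d_2$ is what collapses the period down to the minimal $d_\infty$ and makes the definition of $\varepsilon^\Lambda$ independent of the auxiliary choice of $a\in A$.
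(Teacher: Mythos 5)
Your proof follows the paper's strategy closely in all four parts, and parts (ii)--(iv) are essentially identical to the paper's arguments. You are even a bit more explicit than the paper in two places — in tracking the disposal of the $\sgn(a)^{\#\Lambda_{N,N+d}\cdot q^{rd}}$ factor via $w$-divisibility, and in computing the exponent $-w_r$ of $\sgn(c)$ to verify \eqref{Eq.Theorem-scalar-invariance-of-root-of-unity} — and these additions are correct.

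The one genuine gap is in part (i), at the transition from the $N$-independence of the individual $\eta^{(d)}_N$ to the extraction of a common root $\varepsilon^{\Lambda}$. You assert that "a Bezout combination of $\eta^{(d_1)}_N$ and $\eta^{(d_2)}_N$ extracts a well-defined $\varepsilon^\Lambda$ with $\eta^{(d)}_N=(\varepsilon^\Lambda)^{(q^{rd}-1)/w_r}$ for every admissible $d$", but this does not come for free from the coprimality of $(q^{rd_1}-1)/w_r$ and $(q^{rd_2}-1)/w_r$. If $x_1(q^{rd_1}-1)/w_r + x_2(q^{rd_2}-1)/w_r = 1$ and you set $\varepsilon^{\Lambda}\defeq(\eta^{(d_1)})^{x_1}(\eta^{(d_2)})^{x_2}$, then the claim $(\varepsilon^{\Lambda})^{(q^{rd_i}-1)/w_r}=\eta^{(d_i)}$ is precisely the conclusion of the extraction principle \ref{Subsection.Collection-of-facts}, whose \emph{hypothesis} — a compatibility (proportionality) relation between $\eta^{(d_1)}$ and $\eta^{(d_2)}$ — you invoke implicitly but never check. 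The paper supplies exactly this verification: using the established $N$-independence it first derives the cocycle identity
\[
\varepsilon^{(d_1+d_2)} \;=\; \varepsilon^{(d_2)}\,(\varepsilon^{(d_1)})^{q^{rd_2}} \;=\; \varepsilon^{(d_1)}\,(\varepsilon^{(d_2)})^{q^{rd_1}},
\]
whence $(\varepsilon^{(d_1)})^{q^{rd_2}-1}=(\varepsilon^{(d_2)})^{q^{rd_1}-1}$, and only then applies \ref{Subsection.Collection-of-facts}. Without some version of this step, a Bezout combination of two roots of unity need not satisfy the required power relations (in an abelian group with torsion, exponent Bezout identities do not manufacture common roots unless the compatibility is established first). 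Supplying this cocycle computation would close the gap and bring your proof into line with the paper's.
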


\begin{proof}
	\begin{enumerate}[wide, label=(\roman*)]
		\item Choose \(a \in A\) of degree \(d > 0\). From \eqref{Eq.Bijection-Lambda-N-N+d-times-R} and \eqref{Eq.Identities-for-logarithm-and-sign} we find
		\begin{equation}
			\varepsilon(\Lambda_{N+d, N+2d}) = \varepsilon(\Lambda_{N,N+d})^{q^{rd}}.
		\end{equation}
		This implies that
		\[
			\varepsilon^{(d)} \defeq \varepsilon(\Lambda_{N})^{q^{rd}}/\varepsilon(\Lambda_{N+d})
		\]
		is independent of \(N \gg 0\). Calculation shows that for \(d_{i} = \deg a_{i}\) (\(i=1,2\)),
		\[
			\varepsilon^{(d_{1}+d_{2})} = \varepsilon^{(d_{2})}(\varepsilon^{(d_{1})})^{q^{rd_{2}}} = \varepsilon^{(d_{1})}(\varepsilon^{(d_{2})})^{q^{rd_{1}}},
		\]
		and so
		\begin{equation}
			(\varepsilon^{(d_{1})})^{(q^{rd_{2}}-1)} = (\varepsilon^{(d_{2})})^{q^{rd_{1}}-1}.
		\end{equation}
		Now we choose \(a_{1}, a_{2}\) such that \(\gcd(d_{1}, d_{2}) = d_{\infty}\) and apply \ref{Subsection.Collection-of-facts} to get 
		\(\varepsilon^{(d_{1})} = (\varepsilon^{(d_{\infty})})^{(q^{rd_{1}}-1)/(q_{\infty}^{r}-1)}\) where \(\varepsilon^{(d_{\infty})} = \varepsilon^{\Lambda}\) is an 
		invariant of \(\Lambda\). This shows \eqref{Eq.Theorem-independence-of-root-of-unity} and \eqref{Eq.Theorem-identitiy-for-roots-of-unity}, and
		\eqref{Eq.Theorem-scalar-invariance-of-root-of-unity} is obvious, as \(\sgn(c)\) is a \(w\)-th root of unity (\(w = q_{\infty}-1\)) and 
		\(q_{\infty}^{r}-1 \equiv 0 \pmod{w}\).
		\item Replacing the bound \(N\) by \(N + d_{\infty}\), the newly appearing elements of the index sets of the numerator and of the denominator correspond to each
		other by \(\lambda \leftrightarrow \lambda - u\). Since \(N > \log u\), we have \(\sgn(\lambda) = \sgn(\lambda - u)\), that is 
		\(\varepsilon_{u, N+d_{\infty}}^{\Lambda} = \varepsilon_{u,N}^{\Lambda}\). This gives the independence on \(N\); the independence of the representative
		\(u \pmod{\Lambda}\) is obvious, and \eqref{Eq.Theorem-sign-dependence-by-index-root-of-unity} comes out as the number of factors in the numerator of
		\(\varepsilon_{u,N}^{\Lambda}\) is one larger than the number of factors in the denominator \(\varepsilon(\Lambda_{N})\).
		\item By definition of \(\varepsilon_{u}^{\Lambda}\),
		\[
			\sideset{}{^{\prime}} \prod_{u \in \Lambda'/\Lambda} \varepsilon_{u}^{\Lambda} = \varepsilon(\Lambda_{N}' \smallsetminus \Lambda_{N}) \varepsilon(\Lambda_{N})^{1-[\Lambda':\Lambda]}
		\]
		if \(N > \max \log u\), where \(u\) runs through a system of representatives for \(\Lambda'/\Lambda \smallsetminus \{0\}\), and the right hand side equals
		\(\varepsilon(\Lambda_{N}') \cdot \varepsilon(\Lambda_{N})^{{-}[\Lambda':\Lambda]}\). This gives the equality stated in 
		\eqref{Eq.Theorem-definition-root-for-lattice-pair}, while \eqref{Eq.Theorem-product-formula-for-roots} results from a straightforward calculation left
		to the reader.
		\item Inserting \eqref{Eq.Theorem-definition-root-for-lattice-pair}, the wanted identity is equivalent with (\(\ell \defeq [\Lambda':\Lambda]\)):
		\[
			\frac{\varepsilon(\Lambda_{N})^{\ell w_{r}}}{\varepsilon(\Lambda'_{N})^{w_{r}}} = \frac{\varepsilon(\Lambda_{N})^{q_{\infty}^{r} \cdot \ell} \cdot \varepsilon(\Lambda'_{N + d_{\infty}})}{\varepsilon(\Lambda_{N+d_{\infty}})^{\ell} \cdot \varepsilon(\Lambda_{N}')^{q_{\infty}^{r}}}
		\]
		for \(N \gg 0\), and by cancelling, with
		\[
			\frac{\varepsilon(\Lambda_{N})^{\ell}}{\varepsilon(\Lambda_{N}')} = \frac{\varepsilon(\Lambda_{N+d_{\infty}})^{\ell}}{\varepsilon(\Lambda'_{N + d_{\infty}})}.
		\]
		But the latter are two different expressions for the same quantity \(\varepsilon^{\Lambda'|\Lambda}\).
	\end{enumerate}
\end{proof}

\begin{Remarks}
	\begin{enumerate}[label=(\roman*), wide]
		\item \eqref{Eq.Theorem-product-formula-for-roots} together with (ii) of the theorem states that, for fixed \(\boldsymbol{\omega} \in \Psi\), the map
		\(g \colon (\mathbf{u}, Y) \mapsto \varepsilon_{\mathbf{u}\boldsymbol{\omega}}^{Y_{\boldsymbol{\omega}}}\) is a derived distribution with values in 
		\(\mu(C_{\infty})\), and is motivated by \(m \colon V \smallsetminus \{0\} \to \mu(C_{\infty})\), where \(m(\mathbf{x}) = \sgn(\mathbf{x}\boldsymbol{\omega})\).
		And \eqref{Eq.Theorem-index-formula-for-root-of-lattice-pair} asserts that its \(w_{r}\)-th power \(G \colon (\mathbf{u}, Y) \mapsto (\varepsilon_{\mathbf{u}\boldsymbol{\omega}}^{Y_{\boldsymbol{\omega}}})^{w_{r}}\) is the derivative of the distribution \(F\), where \(F(0,Y) = \varepsilon^{Y}\) and
		\(F(\mathbf{u}, Y) = G(\mathbf{u},Y)F(0,Y)\) if \(\mathbf{u} \notin Y\).
		\item One is tempted to ask if already \(g\) itself could be completed to a distribution like \(G = g^{w_{r}}\), or if possibly a proper divisor \(n\) of 
		\(w_{r}\) does the job. This is actually the case for \(n = w_{r}/(q-1)\), as the discriminant \(\Delta\) defined in Corollary 
		\ref{Corollary.Canonical-discriminant-form}, regarded as a modular form for \(\Gamma_{Y} = \GL(Y)\), has a \((q-1)\)-th root \(h_{Y}\) as a holomorphic 
		function on the Drinfeld space \(\Omega = C_{\infty}^{*} \backslash \Psi\) (\cite{BassonBreuerPink-tA}, \cite{gekeler2023drinfeld}). These \(h_{Y}\) are 
		well-defined up to \((q-1)\)-th roots of unity, and may be chosen in a manner consistent for all \(Y\). We will not pursue that topic here. 	
	\end{enumerate}	
\end{Remarks}

\section{Product expansions for discriminants and division points}\label{Section.Product-expansions}

\subsection{} We fix a lattice pair \(Y'|Y\) in \(V = K^{r}\) and let \(\Delta^{Y'|Y}\) be the associated discriminant function as in \eqref{Eq.Discriminant-of-Phi}, that is
\begin{equation}\label{Eq.Form-of-lattice-pair-in-terms-of-d-bold-u}
	\Delta^{Y'|Y}(\boldsymbol{\omega}) = \Big( \sideset{}{^{\prime}} \prod_{\mathbf{u} \in Y'/Y} d_{\mathbf{u}}^{Y}(\boldsymbol{\omega}) \Big)^{-1}.
\end{equation}
We also fix \(\boldsymbol{\omega} \in \Psi\) and put \(\Lambda \defeq Y_{\boldsymbol{\omega}}\), \(\Lambda' = Y_{\boldsymbol{\omega}}'\), and 
\(\Delta^{\Lambda'|\Lambda} = \Delta^{Y'|Y}(\boldsymbol{\omega})\). The following results will be intrinsic for \(\Lambda\) and \(\Lambda'\) and independent from the
presentation \(\Lambda = Y_{\boldsymbol{\omega}}\), etc., which is only used to embed our current situation into the context of Sections \ref{Section.Introduction-and-Notation}
and \ref{Section.Distributions-and-Derived-Distributions}.

From \eqref{Eq.Form-of-lattice-pair-in-terms-of-d-bold-u} and the definition of 
\(d_{\mathbf{u}}^{Y}(\boldsymbol{\omega}) = e^{Y_{\boldsymbol{\omega}}}(\mathbf{u}\boldsymbol{\omega})\), we get
\begin{equation}
	(\Delta^{Y'|Y})^{-1} = \sideset{}{^{\prime}} \prod_{u \in \Lambda'/\Lambda} e^{\Lambda}(u).
\end{equation}
Therefore, we first treat \(e^{\Lambda}(u)\).

\subsection{} We have
\[
	e^{\Lambda}(u) = \lim_{N \to \infty} e^{\Lambda_{N}}(u)
\]	
with
\begin{equation}
	e^{\Lambda_{N}}(u) = u \sideset{}{^{\prime}} \prod_{\lambda \in \Lambda_{N}} (1 - u/\lambda) = u \sideset{}{^{\prime}} \prod_{\lambda \in \Lambda_{N}} \left( \frac{\lambda + u}{\lambda} \right) = \prod_{\substack{\lambda \in K\Lambda \\ \lambda \equiv u~(\mathrm{mod } \Lambda) \\ \log \lambda \leq N }} \lambda / \sideset{}{^{\prime}} \prod_{\lambda \in \Lambda_{N}} \lambda,	
\end{equation}
supposing that \(N \gg 0\). Now we decompose the factors according to \eqref{Eq.Decomposition-of-C-infty-star}. This yields
\begin{equation}
	e^{\Lambda_{N}}(u) = \pi^{{-}k} \varepsilon_{u,N}^{\Lambda} U_{u,\Lambda, N} / U_{\Lambda, N}^{-1},
\end{equation}
where for \(N \geq \log u\),
\[
	k = d_{\infty}^{-1} \Big( \sum_{\substack{\lambda \in K\Lambda \\ \lambda \equiv u ~(\mathrm{mod } \Lambda) \\ \log \lambda \leq N}} \log \lambda - \sideset{}{^{\prime}} \sum_{\lambda \in \Lambda_{N}} \log \lambda \Big).
\]
For \(N \to \infty\), \(k\) and \(\varepsilon_{u,N}^{\Lambda}\) become stationary, that is, for \(N \gg 0\),
\begin{equation}
	k = d_{\infty}^{{-}1}[Z_{u, \Lambda}'(1) - Z_{\Lambda}'(1)]
\end{equation}
by Theorem \ref{Theorem.Distribution-on-frak-G-is-rational-function}(iii) and
\begin{equation}
	\varepsilon_{u,N}^{\Lambda} = \varepsilon_{u}^{\Lambda}
\end{equation}
as defined in Theorem \ref{Theorem.On-roots-of-unity}, while \(U_{u,\Lambda,N} \to U_{u, \Lambda}\) and \(U_{\Lambda, N} \to U_{\Lambda}\).

\subsection{} Now we use the distribution properties of \(Z_{*,*}\), \(U_{*,*}\) and \(\varepsilon_{*}^{*}\) to get 
\begin{equation}
	\sideset{}{^{\prime}} \sum_{u \in \Lambda'/\Lambda} (Z_{u, \Lambda}'(1) - Z_{\Lambda}'(1) ) = Z_{\Lambda'}'(1) - [\Lambda': \Lambda]Z_{\Lambda}'(1), \footnotemark
\end{equation}
\footnotetext{The reader will have noticed that the primes in \(\sideset{}{^{\prime}} \sum\), \(\Lambda'\) and \(Z_{*,*}'\) all have different meanings.}
\begin{equation}
	\sideset{}{^{\prime}} \prod_{u \in \Lambda'/\Lambda} (U_{u, \Lambda} U_{\Lambda}^{-1}) = U_{\Lambda'} \cdot U_{\Lambda}^{{-}[\Lambda':\Lambda]}
\end{equation}
and
\begin{equation}
	\sideset{}{^{\prime}}\prod_{u \in \Lambda'/\Lambda} \varepsilon_{u}^{\Lambda} = (\varepsilon^{\Lambda'|\Lambda})^{-1}.
\end{equation}
This finally gives our principal result (part (iii) of the Main Theorem).

\begin{Theorem} \label{Theorem.Main-Theorem}
	Let \(\Lambda'|\Lambda\) be a lattice pair of rank \(r\) in \(C_{\infty}\) and \(u \in K\Lambda \smallsetminus \Lambda\).
	\begin{enumerate}[label=\(\mathrm{(\roman*)}\)]
		\item The value of the division point \(e^{\Lambda}(u)\) of the Drinfeld module \(\phi^{\Lambda}\) associated with \(\Lambda\) is
		\begin{equation}\label{Eq.Theorem-formula-exponential-in-terms-of-roots-of-unity}
			e^{\Lambda}(u) = \pi^{{-}k} \varepsilon_{u}^{\Lambda} U_{u, \Lambda} U_{\Lambda}^{-1}
		\end{equation}
		with \(k = d_{\infty}^{-1}(Z_{u, \Lambda}'(1) - Z_{\Lambda}'(1))\).
		\item The value \(\Delta(\Lambda'|\Lambda)\) of the discriminant (see \ref{Subsection.Lattice-rank}) is given by
		\begin{equation}
			\Delta(\Lambda'|\Lambda) = \pi^{k} \varepsilon^{\Lambda'|\Lambda} U_{\Lambda}^{[\Lambda':\Lambda]} U_{\Lambda'}^{-1}
		\end{equation}
		with \(k = d_{\infty}^{-1}(Z_{\Lambda}'(1) - [\Lambda':\Lambda]Z_{\Lambda}'(1))\).
	\end{enumerate}	
\end{Theorem}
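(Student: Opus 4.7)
My plan is to follow the path already traced in the paragraphs immediately preceding the theorem statement: express the truncated exponential as a product over the class of $u$ modulo $\Lambda_N$ divided by a product over $\Lambda_N \smallsetminus \{0\}$, decompose each factor through the isomorphism $C_\infty^* \cong \pi^{\mathds{Q}} \times \mu(C_\infty) \times U^{(1)}(C_\infty)$ of \eqref{Eq.Decomposition-of-C-infty-star}, and then pass to the limit $N \to \infty$ componentwise using the three distributions built in Sections \ref{Section.Z-functions-and-lattices}, \ref{Section.1-units}, \ref{Section.Roots-of-unity}.

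Concretely, for part (i), I start from
\[
    e^{\Lambda_N}(u) \;=\; \prod_{\substack{\lambda \in K\Lambda \\ \lambda \equiv u\,(\mathrm{mod}\,\Lambda) \\ \log\lambda\le N}}\!\!\!\lambda \;\Big/\;\sideset{}{^{\prime}}\prod_{\lambda \in \Lambda_N}\lambda
\]
valid for $N$ larger than $\log u$, factor each $\lambda$ into the product $\pi^{-\log\lambda/d_\infty}\,\sgn(\lambda)\,\langle\lambda\rangle$, and collect terms of the same type. The $\pi$-exponent is $d_\infty^{-1}$ times the difference of two finite sums of $\log\lambda$; by Theorem~\ref{Theorem.Distribution-on-frak-G-is-rational-function}(iii) this difference is stationary and equals $Z'_{u,\Lambda}(1) - Z'_\Lambda(1)$ for $N\gg 0$. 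The $\mu(C_\infty)$-part is precisely $\varepsilon_{u,N}^\Lambda$, which by Theorem~\ref{Theorem.On-roots-of-unity}(ii) stabilises to $\varepsilon_u^\Lambda$. The $1$-unit part converges to $U_{u,\Lambda}/U_\Lambda$ by Proposition~\ref{Proposition.Existence-of-limit}(i). Multiplying the three limits and recalling that $e^\Lambda(u) = \lim_N e^{\Lambda_N}(u)$ yields the formula in part (i).

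For part (ii), I use \eqref{Eq.Discriminant-as-function-of-bold-omega} and the relation at the start of Section~\ref{Section.Product-expansions} to write
\[
    \Delta(\Lambda'|\Lambda)^{-1} \;=\; \sideset{}{^{\prime}}\prod_{u \in \Lambda'/\Lambda} e^\Lambda(u),
\]
insert the formula from part (i), and then collapse each of the three factors using the appropriate distribution relation: the $Z'$-summation collapses via the distribution property of $(\mathbf{u},Y)\mapsto Z'_{\mathbf{u}\boldsymbol{\omega},Y_{\boldsymbol{\omega}}}(1)$ to give $Z'_{\Lambda'}(1) - [\Lambda':\Lambda]Z'_\Lambda(1)$; the $1$-unit product collapses by Proposition~\ref{Proposition.Existence-of-limit}(iii) to $U_{\Lambda'}\,U_\Lambda^{-[\Lambda':\Lambda]}$; and the root-of-unity product collapses by \eqref{Eq.Theorem-definition-root-for-lattice-pair} to $(\varepsilon^{\Lambda'|\Lambda})^{-1}$. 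Inverting and relabelling the sign of $k$ gives the stated expression.

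The main obstacle is bookkeeping: one must be careful that all three limits really become stationary for the same choice of $N \gg 0$, that the three decompositions are genuinely multiplicative (so that the product of limits equals the limit of products), and that the distribution relations are applied to each factor with the same range $\Lambda'/\Lambda \smallsetminus \{0\}$. Fortunately, each of these ingredients has already been established: the stationarity for the $Z$- and $\varepsilon$-parts is proven in Theorems~\ref{Theorem.Distribution-on-frak-G-is-rational-function} and~\ref{Theorem.On-roots-of-unity}; the convergence (but not stationarity) for the $1$-unit part is given in Proposition~\ref{Proposition.Existence-of-limit}; and multiplicativity of the decomposition \eqref{Eq.Decomposition-of-C-infty-star} is built into its construction. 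So the proof reduces to assembling these pieces, with the only real check being that the limiting passage and the summation over $u \in \Lambda'/\Lambda$ for part (ii) may be interchanged, which is immediate once one passes to the limit in each factor separately.
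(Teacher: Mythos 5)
Your proposal is correct and follows essentially the same route as the paper: truncate the exponential as a ratio of two finite products of lattice elements, split each factor through the decomposition $C_\infty^* \cong \pi^{\mathds{Q}} \times \mu(C_\infty) \times U^{(1)}(C_\infty)$, identify the three components with $Z'_{u,\Lambda}(1)-Z'_\Lambda(1)$, $\varepsilon^\Lambda_u$, and $U_{u,\Lambda}U_\Lambda^{-1}$ via Theorem~\ref{Theorem.Distribution-on-frak-G-is-rational-function}(iii), Theorem~\ref{Theorem.On-roots-of-unity}(ii), and Proposition~\ref{Proposition.Existence-of-limit}, and then obtain part (ii) by multiplying over $\Lambda'/\Lambda$ and invoking the three distribution relations. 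This is exactly how the paper assembles its argument in Sections~6.1--6.3.
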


\begin{Corollary}
	Let \(a \in A\) have degree \(d > 0\). Then the usual discriminant \(\Delta_{a}^{Y}(\boldsymbol{\omega})\) of the Drinfeld module \(\phi^{Y_{\boldsymbol{\omega}}}\)
	is
	\begin{equation}\label{Eq.Formula-for-discriminant-of-Drinfeld-module-phi-Y-boldomega}
		\Delta_{a}(\Lambda) = \Delta_{a}^{Y}(\boldsymbol{\omega}) = \pi^{k} \sgn(a) (\varepsilon^{\Lambda})^{(q^{rd}-1)/w_{r}} U_{\Lambda}^{q^{rd}-1}
	\end{equation}	
	with \(k = d_{\infty}^{-1}(1 - q^{rd})Z_{\Lambda}'(1)\) and \(w_{r} = q_{\infty}^{r} - 1\).
\end{Corollary}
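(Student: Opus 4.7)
The plan is to specialize Theorem \ref{Theorem.Main-Theorem}(ii) to the lattice pair $\Lambda' = a^{-1}\Lambda$ over $\Lambda$, then multiply by $a$ and use the factorization $a = \pi^{-d/d_{\infty}}\sgn(a)\langle a\rangle$ to absorb everything into the stated product. The starting point is the identity $\Delta_{a}(\Lambda) = a\,\Delta(a^{-1}\Lambda\mid\Lambda)$ from \eqref{Eq.Discriminant-as-function-of-bold-omega}, together with $[a^{-1}\Lambda:\Lambda] = q^{rd}$.

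First I would translate each of the three invariants appearing in Theorem \ref{Theorem.Main-Theorem}(ii) for the pair $a^{-1}\Lambda\mid\Lambda$ into invariants of $\Lambda$ alone, using the respective homogeneity properties with $c = a^{-1}$ (so $\log c = -d$, $\sgn(c) = \sgn(a)^{-1}$, $\langle c\rangle = \langle a\rangle^{-1}$). From Theorem \ref{Theorem.Distribution-on-frak-G-is-rational-function}(iv), $Z'_{a^{-1}\Lambda}(1) = Z'_{\Lambda}(1) + d$; from Proposition \ref{Proposition.Existence-of-limit}(iv), $U_{a^{-1}\Lambda} = \langle a\rangle\,U_{\Lambda}$; and from \eqref{Eq.Theorem-scalar-invariance-of-root-of-unity}, $\varepsilon^{a^{-1}\Lambda} = \varepsilon^{\Lambda}$.

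Next I would identify the root of unity $\varepsilon^{a^{-1}\Lambda\mid\Lambda}$. Plugging the previous equality into \eqref{Eq.Theorem-index-formula-for-root-of-lattice-pair} gives
\begin{equation*}
(\varepsilon^{a^{-1}\Lambda\mid\Lambda})^{w_{r}} = (\varepsilon^{\Lambda})^{q^{rd}-1}.
\end{equation*}
Since $w_{r} = q_{\infty}^{r}-1$ divides $q^{rd}-1$ by \eqref{Eq.gcd-condition} (as $d_{\infty}\mid rd$ implies $w_{r}\mid q^{rd}-1$), the general principle \eqref{Eq.Existence-of-b-infinity}, applied inside the finite cyclic group $\mu(C_{\infty})$ generated by the relevant roots of unity, forces $\varepsilon^{a^{-1}\Lambda\mid\Lambda} = (\varepsilon^{\Lambda})^{(q^{rd}-1)/w_{r}}$.

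Finally I would assemble the pieces. Substituting the above into Theorem \ref{Theorem.Main-Theorem}(ii) gives
\begin{equation*}
\Delta(a^{-1}\Lambda\mid\Lambda) = \pi^{k'}\,(\varepsilon^{\Lambda})^{(q^{rd}-1)/w_{r}}\,U_{\Lambda}^{q^{rd}-1}\,\langle a\rangle^{-1},
\end{equation*}
where $k' = d_{\infty}^{-1}\bigl(Z'_{a^{-1}\Lambda}(1) - q^{rd}Z'_{\Lambda}(1)\bigr) = d_{\infty}^{-1}(1-q^{rd})Z'_{\Lambda}(1) + d/d_{\infty}$. Multiplying by $a = \pi^{-d/d_{\infty}}\sgn(a)\langle a\rangle$ cancels the $\langle a\rangle^{-1}$ and the extra $\pi^{d/d_{\infty}}$, producing exactly \eqref{Eq.Formula-for-discriminant-of-Drinfeld-module-phi-Y-boldomega}. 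The only subtle point is the identification of $\varepsilon^{a^{-1}\Lambda\mid\Lambda}$; this is where one must know that the exponent $(q^{rd}-1)/w_{r}$ is an integer and that the $w_{r}$-th root is unambiguous inside the cyclic group $\mu(C_{\infty})$, so that \ref{Subsection.Collection-of-facts} pins down the sign. Everything else is bookkeeping with the three product decompositions from Sections \ref{Section.Z-functions-and-lattices}, \ref{Section.1-units}, and \ref{Section.Roots-of-unity}.
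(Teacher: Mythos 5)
Your overall strategy matches the paper's: specialize to the lattice pair $a^{-1}\Lambda \mid \Lambda$, factor $a = \pi^{-d/d_\infty}\sgn(a)\langle a\rangle$, and handle the three factors in the decomposition \eqref{Eq.Decomposition-of-C-infty-star} separately. The $\pi$-part and the $1$-unit part of your argument are correct and identical to the paper's. But your handling of the root-of-unity factor $\varepsilon^{a^{-1}\Lambda \mid \Lambda}$ contains a genuine gap.

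You deduce $(\varepsilon^{a^{-1}\Lambda\mid\Lambda})^{w_r} = (\varepsilon^\Lambda)^{q^{rd}-1}$ from \eqref{Eq.Theorem-index-formula-for-root-of-lattice-pair} and \eqref{Eq.Theorem-scalar-invariance-of-root-of-unity}, and then claim this "forces" $\varepsilon^{a^{-1}\Lambda\mid\Lambda} = (\varepsilon^\Lambda)^{(q^{rd}-1)/w_r}$. It does not: the equation only pins down $\varepsilon^{a^{-1}\Lambda\mid\Lambda}$ modulo $w_r$-th roots of unity, i.e.\ $\varepsilon^{a^{-1}\Lambda\mid\Lambda} \cdot (\varepsilon^\Lambda)^{-(q^{rd}-1)/w_r}$ is some $\zeta \in \mu_{w_r}$, and nothing in your argument shows $\zeta = 1$. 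Your appeal to \eqref{Eq.Existence-of-b-infinity} is circular: that lemma's hypothesis requires that \emph{every} $\mathds{Z}$-relation $n_1(q^{rd}-1) + n_2 w_r = 0$ between the weights be matched by $\bigl(\varepsilon^{a^{-1}\Lambda\mid\Lambda}\bigr)^{n_1}\bigl(\varepsilon^\Lambda\bigr)^{n_2} = 1$. The generating relation is $(n_1,n_2) = \bigl(1, -(q^{rd}-1)/w_r\bigr)$, and verifying the hypothesis for it is precisely the equality you are trying to prove. You only know the relation holds for the $w_r$-fold multiple $(n_1,n_2) = \bigl(w_r, -(q^{rd}-1)\bigr)$.

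The paper avoids this ambiguity by computing $\varepsilon^{a^{-1}\Lambda\mid\Lambda}$ exactly from its finite-level definition \eqref{Eq.Theorem-definition-root-for-lattice-pair}: $\varepsilon^{a^{-1}\Lambda\mid\Lambda} = \varepsilon(\Lambda_N)^{q^{rd}}\,\varepsilon\bigl((a^{-1}\Lambda)_N\bigr)^{-1}$ for $N\gg 0$, then uses $(a^{-1}\Lambda)_N = a^{-1}(\Lambda_{N+d})$ to get $\varepsilon\bigl((a^{-1}\Lambda)_N\bigr) = \sgn(a)^{1-\#\Lambda_{N+d}}\varepsilon(\Lambda_{N+d}) = \varepsilon(\Lambda_{N+d})$ (since $w = q_\infty - 1$ divides $\#\Lambda_{N+d} - 1$), and finally applies \eqref{Eq.Theorem-identitiy-for-roots-of-unity} to identify $\varepsilon(\Lambda_N)^{q^{rd}}/\varepsilon(\Lambda_{N+d}) = (\varepsilon^\Lambda)^{(q^{rd}-1)/w_r}$ on the nose. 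That direct computation is what pins down the root of unity; a $w_r$-th-power identity cannot. Since the whole point of this corollary (and the "canonical" $\Delta$ that follows) is the \emph{exact} root of unity beyond what earlier works achieved, this is not a bookkeeping detail but the crux of the argument, and your proof as written does not establish it.
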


\begin{proof}
	This is the case of Theorem \ref{Theorem.Main-Theorem} where \(Y' = a^{-1}Y\). Here 
	\(\Delta_{a}^{Y}(\boldsymbol{\omega}) = a\Delta^{Y'|Y}(\boldsymbol{\omega}) = a\Delta(\Lambda'|\Lambda)\), where \(\Delta(\Lambda'|\Lambda)\) has the following components:
	\begin{itemize}
		\item \(\pi^{k}\), \(k = d_{\infty}^{-1}(Z_{a^{-1}\Lambda}'(1) - q^{rd}Z_{\Lambda}'(1))\)
		\item \(\varepsilon^{a^{-1}\Lambda|\Lambda}\)
		\item \(U_{\Lambda}^{q^{rd}}/U_{a^{-1}\Lambda}\),
	\end{itemize}
	and \(a\) splits as \(a = \pi^{{-}\log a/d_{\infty}} \sgn(a) \langle a \rangle\). 
	
	Now \(Z_{a^{-1}\Lambda}'(1) = Z_{\Lambda}'(1) + \log a\) by \eqref{Eq.Homogenity-property-of-distribution-on-frak-G}, the term \(\log a\) cancels, and \(k\) in 
	\eqref{Eq.Formula-for-discriminant-of-Drinfeld-module-phi-Y-boldomega} is as stated.
	
	Next, \(\varepsilon^{a^{-1}\Lambda | \Lambda} = \varepsilon(\Lambda_{N})^{q^{rd}} \varepsilon^{{-}1}( (a^{{-}1}\Lambda)_{N} )\) for \(N \gg 0\) by
	\eqref{Eq.Theorem-definition-root-for-lattice-pair}. Further, \((a^{-1}\Lambda)_{N} = a^{-1}(\Lambda_{N+d})\), so
	\[
		\varepsilon( (a^{-1}\Lambda)_{N}) = \sgn(a)^{1-\#(\Lambda_{N+d})}\varepsilon(\Lambda_{N+d}) = \varepsilon(\Lambda_{N+d}),
	\]
	as \(\#(\Lambda_{N+d})-1\) is divisible by \(w = q_{\infty} -1\), and
	\[
		\varepsilon^{a^{-1}\Lambda|\Lambda} = \varepsilon(\Lambda_{N})^{q^{rd}}/\varepsilon(\Lambda_{N+d}) = (\varepsilon^{\Lambda})^{(q^{rd}-1)/w_{r}}
	\]
	by \eqref{Eq.Theorem-identitiy-for-roots-of-unity}. This gives the unit root part of \eqref{Eq.Formula-for-discriminant-of-Drinfeld-module-phi-Y-boldomega}. Finally,
	\(U_{\Lambda}^{q^{rd}}/U_{a^{-1}\Lambda} = \langle a \rangle^{-1} U_{\Lambda}^{q^{rd}-1}\) by \ref{Proposition.Existence-of-limit}(iv).
\end{proof}

\begin{Corollary-Definition}\label{Corollary.Canonical-discriminant-form}
	There is a \textbf{canonical discriminant} \(\Delta(\Lambda)\) with the property
	\begin{equation}\label{Eq.Definition-canonical-discriminant-property}
		\Delta_{a}(\Lambda) = \sgn(a) \Delta(\Lambda)^{(q^{rd}-1)/w_{r}}
	\end{equation}
	for \(0 \neq a \in A\) of degree \(d\). It also satisfies the product formula
	\begin{equation}\label{Eq.Product-formula-for-canonical-discriminant-property}
		\Delta(\Lambda) = \pi^{k}\varepsilon^{\Lambda}U_{\Lambda}^{w_{r}}
	\end{equation}
	with \(k = {-}d_{\infty}^{{-}1} w_{r} Z_{\Lambda}'(1)\), and
	\begin{equation}\label{Eq.Weight-of-Delta-as-modular-form}
		\text{the weight of \(\Delta\) as a modular form is \(w_{r} = q_{\infty}^{r} -1\).}
	\end{equation}
	(This is item(i) of the Main Theorem.)
\end{Corollary-Definition}

\begin{proof}
	Let \(a,a'\) be elements of \(A\) of respective degrees \(d, d'\) and with \(\gcd(d,d') = d_{\infty}\). By \eqref{Eq.gcd-condition} applied to \(q' = q^{r}\),
	we also have \(\gcd(q^{rd}-1, q^{rd'}-1) = q_{\infty}^{r} - 1 = w_{r}\). Now we apply \eqref{Eq.Existence-of-b-infinity} to the modular forms 
	\(\tilde{\Delta}_{a} \defeq \sgn(a)^{-1} \Delta_{a}\), \(\tilde{\Delta}_{a'} \defeq \sgn(a')^{-1} \Delta_{a'}\), of weights \(q^{rd}-1\), \(q^{rd'}-1\), respectively, and,
	writing \(d_{\infty} = xd + x'd'\) (\(x,x' \in \mathds{Z}\)), we define
	\begin{equation} \label{Eq.Definition-Delta-no-extras}
		\Delta \defeq \tilde{\Delta}_{a}^{x} \tilde{\Delta}_{a'}^{x'}.
	\end{equation}
	Then \eqref{Eq.Product-formula-for-canonical-discriminant-property} follows from \eqref{Eq.Formula-for-discriminant-of-Drinfeld-module-phi-Y-boldomega}, which also shows
	the independence of \(\Delta\) from the choices of \(a\), \(a'\), \(x\), and \(x'\), as well as \eqref{Eq.Definition-canonical-discriminant-property}. Finally,
	\eqref{Eq.Definition-Delta-no-extras} gives the weight \(w_{r}\) for \(\Delta\), i.e., \eqref{Eq.Weight-of-Delta-as-modular-form}.
\end{proof}

\begin{Remark}
	In \cite{gekeler2023drinfeld} we had defined \(\Delta\) of weight \(w_{r}\), but it was well-defined only up to roots of unity. The same problem occurs 
	in \cite{BassonBreuerPink-tA} Proposition 16.4, where \(\Delta\) was also defined up to roots of unity. The present \(\Delta\) is in so far canonical as it depends 
	only on the sign function \enquote{\(\sgn\)} restricted to \(K\) (or, what amounts to the same, the choice of the uniformizer \(\pi\) of \(K_{\infty}\) 
	modulo \(\mathfrak{p}_{\infty}^{2}\), where \(\mathfrak{p}_{\infty}\) is the valuation ideal in \(K_{\infty}\)), but not on the other choices made.	
\end{Remark}

\begin{Corollary}\label{Corollary.Absolute-values-of-discriminants}
	The absolute values of the discriminants are as follows: 
	\begin{enumerate}[label=\(\mathrm{(\roman*)}\)]
		\item \(\lvert \Delta(\Lambda'|\Lambda) \rvert = q^{k}\), \(k = [\Lambda':\Lambda] Z_{\Lambda}'(1) - Z_{\Lambda'}'(1)\)
		\item \(\lvert \Delta_{a}(\Lambda) \rvert = q^{k}\), \(k = (q^{rd}-1)Z_{\Lambda}'(1)\)
		\item \(\lvert \Delta(\Lambda) \rvert = q^{k}\), \(k = w_{r}Z_{\Lambda}'(1)\)
	\end{enumerate}
\end{Corollary}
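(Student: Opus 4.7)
The plan is to read off the absolute values directly from the explicit product expansions in Theorem \ref{Theorem.Main-Theorem} and in the two corollaries immediately preceding. The decisive observation is that in the factorization \(C_{\infty}^{*} \cong \pi^{\mathds{Q}} \times \mu(C_{\infty}) \times U^{(1)}(C_{\infty})\) of \eqref{Eq.Decomposition-of-C-infty-star}, the two rightmost factors are annihilated by \(\lvert \ldot \rvert\): every root of unity and every \(1\)-unit has absolute value \(1\). Hence in each product formula only the \(\pi\)-power contributes to \(\lvert \Delta \rvert\), and I just need to track that single rational exponent \(k_{0}\), converting it at the end via the normalization \(\lvert \pi^{k_{0}} \rvert = q^{-k_{0} d_{\infty}}\).

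For (ii) and (iii) this is essentially immediate. In case (ii), formula \eqref{Eq.Formula-for-discriminant-of-Drinfeld-module-phi-Y-boldomega} writes \(\Delta_{a}(\Lambda)\) as \(\pi^{k_{0}}\) times a root of unity times a \(w_{r}\)-th power of a \(1\)-unit, with \(k_{0} = d_{\infty}^{-1}(1 - q^{rd}) Z_{\Lambda}'(1)\); taking absolute values immediately yields \(\lvert \Delta_{a}(\Lambda) \rvert = q^{-k_{0} d_{\infty}} = q^{(q^{rd} - 1) Z_{\Lambda}'(1)}\). In case (iii), formula \eqref{Eq.Product-formula-for-canonical-discriminant-property} says \(\Delta(\Lambda) = \pi^{k_{0}} \varepsilon^{\Lambda} U_{\Lambda}^{w_{r}}\) with \(k_{0} = -d_{\infty}^{-1} w_{r} Z_{\Lambda}'(1)\), so \(\lvert \Delta(\Lambda) \rvert = q^{w_{r} Z_{\Lambda}'(1)}\).

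For (i) the same philosophy applies to Theorem \ref{Theorem.Main-Theorem}(ii), but the \(\pi\)-exponent given there needs to be unfolded carefully. The cleanest way is to return to the identity \((\Delta^{Y'|Y})^{-1} = \sideset{}{^{\prime}}\prod_{u \in \Lambda'/\Lambda} e^{\Lambda}(u)\) combined with the single-factor expansion \eqref{Eq.Theorem-formula-exponential-in-terms-of-roots-of-unity}. Summing the individual exponents \(k_{u} = d_{\infty}^{-1}(Z_{u,\Lambda}'(1) - Z_{\Lambda}'(1))\) over \(u \in \Lambda'/\Lambda \smallsetminus \{0\}\), and using the distribution identity \(\sum_{u \in \Lambda'/\Lambda} Z_{u,\Lambda}'(1) = Z_{\Lambda'}'(1)\) from Theorem \ref{Theorem.Distribution-on-frak-G-is-rational-function}, the \(u = 0\) contribution is \(Z_{\Lambda}'(1)\), and the primed sum becomes \(d_{\infty}^{-1}(Z_{\Lambda'}'(1) - [\Lambda':\Lambda]Z_{\Lambda}'(1))\). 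Inverting and applying \(\lvert \pi^{k_{0}} \rvert = q^{-k_{0} d_{\infty}}\) gives \(\lvert \Delta(\Lambda'|\Lambda) \rvert = q^{[\Lambda':\Lambda] Z_{\Lambda}'(1) - Z_{\Lambda'}'(1)}\).

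There is no serious obstacle: all the structural work has been done in the main theorem and its corollaries. The only things to watch are the sign and the normalization convention \(\lvert \pi \rvert^{-1} = q^{d_{\infty}}\), so that the \(d_{\infty}^{-1}\) in each exponent \(k_{0}\) correctly cancels the \(d_{\infty}\) produced by passing from \(\pi^{k_{0}}\) to its absolute value, and the minus sign in Theorem \ref{Theorem.On-roots-of-unity} style conventions does not leak in spuriously.
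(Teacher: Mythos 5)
Your proof is correct and follows exactly the route the paper intends: the Corollary is stated without proof precisely because it is to be read off from the product formulas of Theorem \ref{Theorem.Main-Theorem} and its Corollaries by noting that the \(\mu(C_{\infty})\)- and \(U^{(1)}(C_{\infty})\)-components are invisible to \(\lvert \ldot \rvert\), and that \(\lvert \pi^{k_{0}} \rvert = q^{-k_{0}d_{\infty}}\). You were also right to be cautious with the exponent stated in Theorem \ref{Theorem.Main-Theorem}(ii), which has a small typo (\(Z_{\Lambda}'(1)\) should read \(Z_{\Lambda'}'(1)\) in the first term); your re-derivation via \((\Delta(\Lambda'|\Lambda))^{-1} = \sideset{}{^{\prime}}\prod_{u} e^{\Lambda}(u)\) and the distribution identity recovers the correct exponent \(d_{\infty}^{-1}(Z_{\Lambda'}'(1) - [\Lambda':\Lambda]Z_{\Lambda}'(1))\).
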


\subsection{} So far, we had \(\Delta(\Lambda'|\Lambda) = \Delta^{Y'|Y}(\boldsymbol{\omega})\), \(\Delta_{a}(\Lambda) = \Delta_{a}^{Y}(\boldsymbol{\omega})\),
\(\Delta(\Lambda) = \Delta^{Y}(\boldsymbol{\omega})\) with a fixed \(\boldsymbol{\omega} \in \Psi\). Now we allow \(\boldsymbol{\omega}\) to vary over \(\Psi\).
For \(H\) one of the factors \(\mathds{Q}\), \(\mu(C_{\infty})\), \(U^{(1)}(C_{\infty})\) of \(C_{\infty}^{*}\) (see \eqref{Eq.Decomposition-of-C-infty-star}), we let
\(\Maps(\Psi, H)\) be the group of \(H\)-valued functions on \(\Psi\). The following hold since we know the corresponding properties argumentwise:

\subsubsection{} The map
\begin{align*}
	\mathfrak{Y}^{*} 		&\longrightarrow \Maps(\Psi, \mathds{Q}) \\
		(\mathbf{u}, Y)		&\longmapsto \log d_{\mathbf{u}}^{Y}(\boldsymbol{\omega}) = Z_{\mathbf{u}\boldsymbol{\omega}, Y_{\boldsymbol{\omega}}}'(1) - Z_{Y_{\boldsymbol{\omega}}}'(1)	
	\intertext{is a derived distribution, motivated by}
	V \smallsetminus \{0\}	&\longrightarrow \Maps(\Psi, \mathds{Q}). \\
					x		&\longmapsto \log \mathbf{x}\boldsymbol{\omega}	
	\intertext{It is the derivative of the distribution}
		\mathfrak{Y}		&\longrightarrow \Maps(\Psi, \mathds{Q}) \\
		(\mathbf{u}, Y)		&\longmapsto Z_{\mathbf{u}\boldsymbol{\omega}, Y_{\boldsymbol{\omega}}}'(1).
\end{align*}
\subsubsection{} Second, 
\begin{align*}
	\mathfrak{Y}			&\longrightarrow \Maps(\Psi, U^{(1)}(C_{\infty})) \\
			(\mathbf{u}, Y)	&\longmapsto U_{\mathbf{u},Y}
	\intertext{is a distribution, and is motivated by \(\mathbf{x} \mapsto \langle \mathbf{x}\boldsymbol{\omega}\rangle\).
	\subsubsection{}Third,}
	\mathfrak{Y}^{*}		&\longrightarrow \Maps(\Psi, \mu(C_{\infty})) \\
	(\mathbf{u}, Y)			&\longmapsto \varepsilon_{\mathbf{u}\boldsymbol{\omega}}^{Y_{\boldsymbol{\omega}}}		
\end{align*}
is a derived distribution motivated by \(\mathbf{x} \mapsto \sgn(\mathbf{x}\boldsymbol{\omega})\). Its \(w_{r}\)-th power 
\((\mathbf{u}, Y) \mapsto (\varepsilon_{\mathbf{u}\boldsymbol{\omega}}^{Y_{\boldsymbol{\omega}}})^{w_{r}}\) is the derivative of a distribution.

We put these together to the next result (item (ii) of the Main Theorem).

\begin{Corollary}
	Let \(\mathcal{O}(\Psi)^{*}\) be the group of invertible holomorphic functions on the Drinfeld space \(\Psi = \Psi^{r}\). Then the map
	\begin{align*}
		F \colon \mathfrak{Y}	&\longrightarrow \mathcal{O}(\Psi)^{*} \\
			(\mathbf{u}, Y)		&\longmapsto \begin{cases} (d_{\mathbf{u}}^{Y})^{w_{r}} \Delta^{Y}	&(\mathbf{u} \notin Y) \\ \Delta^{Y}	&(\mathbf{u} \in Y) \end{cases}
	\end{align*}
	is a distribution with derivative \(G \colon (\mathbf{u}, Y) \mapsto (d_{\mathbf{u}}^{Y})^{w_{r}}\). \(G\) is motivated by 
	\(m \colon V \smallsetminus \{0\} \to \mathcal{O}(\Psi)^{*}\), \(\mathbf{x} \mapsto (\mathbf{x}\boldsymbol{\omega})^{w_{r}}\).
\end{Corollary}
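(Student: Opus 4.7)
The plan is to factor $F$ through the decomposition
\[ C_{\infty}^{*} \cong \pi^{\mathds{Q}} \times \mu(C_{\infty}) \times U^{(1)}(C_{\infty}) \]
of \eqref{Eq.Decomposition-of-C-infty-star} and verify the distribution identity componentwise, bootstrapping from the three factor-wise distributions already established in Sections \ref{Section.Z-functions-and-lattices} through \ref{Section.Product-expansions}. Concretely, combining Theorem \ref{Theorem.Main-Theorem}(i) with Corollary/Definition \ref{Corollary.Canonical-discriminant-form} and cancelling the $Z_{\Lambda}'(1)$ and $U_{\Lambda}$ contributions that appear in both $(d_{\mathbf{u}}^{Y})^{w_{r}}$ and $\Delta^{Y}$, I obtain, for $\mathbf{u} \notin Y$ and $\Lambda = Y_{\boldsymbol{\omega}}$,
\[ F(\mathbf{u}, Y) = \pi^{{-}w_{r}d_{\infty}^{{-}1}Z_{\mathbf{u}\boldsymbol{\omega},\Lambda}'(1)} \cdot \bigl( (\varepsilon_{\mathbf{u}\boldsymbol{\omega}}^{\Lambda})^{w_{r}} \varepsilon^{\Lambda} \bigr) \cdot U_{\mathbf{u}\boldsymbol{\omega}, \Lambda}^{w_{r}}, \]
while at $\mathbf{u} \in Y$ the same three-factor formula holds with the middle factor replaced by $\varepsilon^{\Lambda}$ and with $Z_{\mathbf{u}\boldsymbol{\omega},\Lambda}'(1)$, $U_{\mathbf{u}\boldsymbol{\omega}, \Lambda}$ reducing to their values at $\mathbf{u} = 0$. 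Thus $F$ splits as a product of three factor-wise maps into $\pi^{\mathds{Q}}$, $\mu(C_{\infty})$, and $U^{(1)}(C_{\infty})$.

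Each of these factor-wise maps is a distribution in its own right. The $\pi^{\mathds{Q}}$-factor inherits the distribution property from $Z_{*,*}'(1)$, obtained by differentiating the distribution relation for $Z_{*,*}$ (Theorem \ref{Theorem.Distribution-on-frak-G-is-rational-function}) at $S = 1$. The $U^{(1)}$-factor is Proposition \ref{Proposition.Existence-of-limit} raised to the $w_{r}$-th power. The $\mu(C_{\infty})$-factor $F_{\mu}$, defined by $F_{\mu}(0, Y) = \varepsilon^{Y_{\boldsymbol{\omega}}}$ and $F_{\mu}(\mathbf{u}, Y) = (\varepsilon_{\mathbf{u}\boldsymbol{\omega}}^{Y_{\boldsymbol{\omega}}})^{w_{r}} \varepsilon^{Y_{\boldsymbol{\omega}}}$ for $\mathbf{u} \notin Y$, is a distribution precisely because of the identity \eqref{Eq.Theorem-index-formula-for-root-of-lattice-pair} of Theorem \ref{Theorem.On-roots-of-unity}(iv), namely $(\varepsilon^{\Lambda'|\Lambda})^{w_{r}} = (\varepsilon^{\Lambda})^{[\Lambda':\Lambda]}/\varepsilon^{\Lambda'}$. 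The product of three distributions (in distinct commuting factors of $C_{\infty}^{*}$) is then a distribution with values in $\mathcal{O}(\Psi)^{*}$; holomorphy and invertibility on $\Psi$ transfer from the known modular-form properties of $d_{\mathbf{u}}^{Y}$ (nonvanishing since $\mathbf{u}\boldsymbol{\omega} \notin Y_{\boldsymbol{\omega}}$) and $\Delta^{Y}$.

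The derivative claim $DF = G$ is then immediate: $F(\mathbf{u}, Y)/F(0, Y) = (d_{\mathbf{u}}^{Y})^{w_{r}}$ for $\mathbf{u} \notin Y$. For the motivation of $G$ by $m(\mathbf{x}) = (\mathbf{x}\boldsymbol{\omega})^{w_{r}}$, I start from the defining conditionally convergent product for the exponential and rewrite
\[ d_{\mathbf{u}}^{Y}(\boldsymbol{\omega}) = e^{Y_{\boldsymbol{\omega}}}(\mathbf{u}\boldsymbol{\omega}) = \lim_{N \to \infty} \Bigl( \prod_{\substack{\mathbf{x} \in V,\, \mathbf{x} \equiv \mathbf{u}~(\mathrm{mod}\,Y) \\ \log_{q} \lVert \mathbf{x} \rVert_{\boldsymbol{\omega}} \leq N}} \mathbf{x}\boldsymbol{\omega} \Bigr) \Big/ \sideset{}{^{\prime}} \prod_{\substack{\mathbf{y} \in Y \\ \log_{q} \lVert \mathbf{y} \rVert_{\boldsymbol{\omega}} \leq N}} \mathbf{y}\boldsymbol{\omega}, \]
using the norm $\lVert \mathbf{x} \rVert_{\boldsymbol{\omega}} = \lvert \mathbf{x}\boldsymbol{\omega} \rvert$ from the Remarks in Section \ref{Section.Distributions-and-Derived-Distributions}; raising both sides to the $w_{r}$-th power recognises the right-hand side as precisely the expression \eqref{Eq.Derived-distribution-motivated-by-m} defining motivation of $G$ by $m(\mathbf{x}) = (\mathbf{x}\boldsymbol{\omega})^{w_{r}}$.

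The one genuinely delicate point is the $\mu$-factor: as noted at the start of Section \ref{Section.Roots-of-unity}, the raw map $(\mathbf{u}, Y) \mapsto \varepsilon_{\mathbf{u}\boldsymbol{\omega}}^{Y_{\boldsymbol{\omega}}}$ does \emph{not} extend to a distribution, and it is exactly the $w_{r}$-th power that restores this, as engineered in Theorem \ref{Theorem.On-roots-of-unity}. The appearance of $w_{r}$ in the statement is therefore forced rather than cosmetic; once this is granted, the $\pi$- and $U$-components are routine exponent bookkeeping.
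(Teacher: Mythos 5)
Your proof is correct and follows the same route as the paper: factor $F$ through $C_{\infty}^{*} \cong \pi^{\mathds{Q}} \times \mu(C_{\infty}) \times U^{(1)}(C_{\infty})$, check the distribution relation componentwise from the $\mathds{Q}$-, $\mu$-, and $U^{(1)}$-distributions of Sections \ref{Section.Z-functions-and-lattices}--\ref{Section.Roots-of-unity}, and combine. One small refinement worth flagging: for the $\mu$-factor $F_{\mu}$ you invoke only \eqref{Eq.Theorem-index-formula-for-root-of-lattice-pair}, but verifying both cases of \eqref{Eq.Evaluation-of-f-for-sublattice-Y'-and-Y} also uses the derived-distribution relation \eqref{Eq.Theorem-product-formula-for-roots} (for $\mathbf{v}\notin Y'$) and \eqref{Eq.Theorem-definition-root-for-lattice-pair} (for $\mathbf{v}\in Y'$); formula \eqref{Eq.Theorem-index-formula-for-root-of-lattice-pair} is the additional ingredient that promotes the already-established derived distribution to an honest distribution after taking $w_{r}$-th powers, exactly as Remark 5.4(i) records. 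Your derivation of the motivation of $G$ directly from the convergent product defining $e^{\Lambda}$, with the $\boldsymbol{\omega}$-norm, is a bit more explicit than the paper's terse listing in 6.5.1--6.5.3 but amounts to the same thing.
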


\subsection{A heuristic consideration} We would like to regard the product formula \eqref{Eq.Product-formula-for-canonical-discriminant-property} for \(\Delta\) (and similarly,
the formula \eqref{Eq.Theorem-formula-exponential-in-terms-of-roots-of-unity} for \(e^{\Lambda}(u)\)) in Euler's style as a formula
\begin{equation} \label{Eq.Heuristic-consideration-Delta-of-Lambda}
	? \Delta(\Lambda) = \sideset{}{^{\prime}} \prod_{\lambda \in \Lambda} \lambda^{w_{r}} ?
\end{equation}
and accordingly for \(e^{\Lambda}(u)\). A first attempt to endow it with some reason was to replace it by
\begin{equation} \label{Eq.Heuristic-consideration-Delta-of-Lambda-as-limit}
	? \Delta(\Lambda) = \lim_{N \to \infty} \sideset{}{^{\prime}} \prod_{\lambda \in \Lambda_{N}} \lambda^{w_{r}} ?,
\end{equation}
and to split the latter into components according to \eqref{Eq.Decomposition-of-C-infty-star}. This works perfectly for the \fbox{\(1\)-unit part}, in view of Proposition
\ref{Proposition.Existence-of-limit}; that is, the \(1\)-unit part of \eqref{Eq.Heuristic-consideration-Delta-of-Lambda-as-limit} is true. It works less smoothly for the
\fbox{absolute value part}, since \(\lim_{N \to \infty} \sideset{}{^{\prime}} \sum_{\lambda \in \Lambda_{N}} \log \lambda\) doesn't exist. But at least, 
\[
	\lim_{N \to \infty} \Big( \sideset{}{^{\prime}} \sum_{\substack{\lambda \in K\Lambda \\ \lambda \equiv u ~(\mathrm{mod } \Lambda) \\ \log \lambda \leq N}} \log \lambda - \sideset{}{^{\prime}} \sum_{\lambda \in \Lambda_{N}} \log \lambda \Big)
\]
exists and equals \(d_{\infty}^{-1}(Z_{u, \Lambda}'(1) - Z_{\Lambda}'(1))\) by \eqref{Eq.Theorem-Difference-of-sums-of-logarithms}. Then \(\lim_{N \to \infty} \sideset{}{^{\prime}} \sum_{\lambda \in \Lambda_{N}} \log \lambda\) is assigned the value \(Z_{\Lambda}'(1)\) through the distribution property of \(Z_{*,*}'(1)\). Note that
this evaluation resembles the procedure of analytic continuation. Finally, for the \fbox{unit root part}, we have to slightly alter 
\[
	\sideset{}{^{\prime}} \prod_{\lambda \in \Lambda_{N}} \sgn(\lambda)^{w_{r}} = \Big( \sideset{}{^{\prime}} \prod_{\lambda \in \Lambda_{N}} \sgn(\lambda)^{q_{\infty}^{r}} \Big)/ \sideset{}{^{\prime}} \prod_{\lambda \in \Lambda_{N}} \sgn(\lambda)
\]
to
\[
	\Big( \sideset{}{^{\prime}} \prod_{\lambda \in \Lambda_{N}} \sgn(\lambda)^{q_{\infty}^{r}} \Big)/ \sideset{}{^{\prime}} \prod_{\lambda \in \Lambda_{N+d_{\infty}}} \sgn(\lambda) \qquad (N \gg 0)
\]
in order to get the correct value \(\varepsilon^{\Lambda}\) for the unit root part of formula \eqref{Eq.Product-formula-for-canonical-discriminant-property}. Together, 
it seems justified to regard \eqref{Eq.Product-formula-for-canonical-discriminant-property} as a regularized product like \eqref{Eq.Heuristic-consideration-Delta-of-Lambda}
over \(\Lambda\).

\section{The case \(A = \mathds{F}[T]\)} \label{Section.The-case-A-IF-T}

As an example for the preceding, we now restrict to the most important case where the curve \(\mathcal{C}\) in \ref{Subsection.First-introduction-of-notation} is the
projective line and \(\infty\) the usual place at infinity, and so the Drinfeld ring \(A\) is a polynomial ring \(\mathds{F}[T]\) as in 
\ref{Subsubsection.Simple-example-C-projective-line-infty-place-at-infinity}.

Here, the corresponding distributions for \fbox{\(r=1\)} (related to the Carlitz module and its division points) have already been studied 1974 by Hayes \cite{Hayes74} and
1980 by Galovich and Rosen \cite{GalovichRosen81}. In the case \fbox{\(r=2\)} there is a large amount of work about the corresponding modular forms, starting 1980 with
\cite{Goss80} and \cite{Gekeler80}, \cite{Gekeler85}, \cite{Gekeler84}, \cite{Gekeler88}, and since then continued by many authors. The higher rank case \fbox{\(r > 2\)} 
has become a topic of serious research only about 2017 with a series of preprints \cite{basson2018drinfeldI}, \cite{basson2018drinfeldII}, \cite{basson2018drinfeldIII} 
by Basson, Breuer and Pink (which now are to appear in \cite{BassonBreuerPink-tA}) and the ongoing series \enquote{On Drinfeld modular forms of higher rank I, II, \dots} 
by the present author \cite{Gekeler17}, \cite{Gekeler22}, \cite{gekeler2023drinfeld}. Actually it was the desire to find a common framework for the 
distributive aspects of these (and other) papers that led to the present work.

\subsection{} Assume \fbox{\(r=1\)}. As each lattice \(Y\) in \(V = K = \mathds{F}(T)\) is free of rank \(1\) of shape \(Ay\), we may replace the distribution domain
\(\mathfrak{Y}\) by \(\mathfrak{Y} = K/A\) and the distribution property \eqref{Eq.Evaluation-of-f-for-sublattice-Y'-and-Y} by 
\begin{equation}
	\sum_{\substack{u \in K/A \\ u \equiv v~(\mathrm{mod}~n^{-1}A)}} f(u) = f(v) \qquad \text{for } 0 \neq n \in A.
\end{equation} 
This is the point of view of \cite{GalovichRosen81}. We take \(\pi = T^{-1}\) as uniformizer at infinity, \(\sgn(a) =\) leading coefficient of \(a \in A\) as a polynomial in \(T\). The
\(Z\)-function is
\begin{equation}
	Z_{A}(S) = (q-1) \sum_{i \geq 0} (qS)^{i} = (q-1)/(1-qS)^{-1}.
\end{equation}
For \(0 \neq u \in K/A\), represented by \(a/n\) with \((a,n) = 1\), \(a, n \in A\) with \(n\) monic and \(0 \leq d \defeq \deg a < \deg n\), we have
\begin{equation}
	Z_{u,A}(S) = S^{d-\deg n} + Z_{A}(S),
\end{equation}
and so
\begin{equation}
	Z_{A}'(1) = q/(q-1), \quad Z_{u,A}'(1) = q/(q-1) + d - \deg n.
\end{equation}
Inserting these into the formulas of the last section, we get expressions for the sizes of \(d_{u}^{A}(1) = e_{A}(u)\) and \(\Delta^{A}(1) = \Delta(A)\).
Let \(\overline{\pi}\) be the period of the Carlitz module (well-defined up to a \((q-1)\)-th root of unity), so that \(A_{\overline{\pi}} = A\overline{\pi}\) is
its period lattice.\footnote{\(\overline{\pi}\) should not be confused with \(\pi = \) uniformizer at \(\infty = T^{-1}\) (here).} By the general formalism
\begin{equation}
	d_{u}^{A}(\overline{\pi}) = \overline{\pi} d_{u}^{A}(1)
\end{equation}
is the corresponding division point of the Carlitz module and 
\begin{equation}
	\Delta^{A}(\overline{\pi}) = \overline{\pi}^{1-q} \Delta^{A}(1) = 1
\end{equation}
by definition of the Carlitz module. We find from \eqref{Eq.Formula-for-discriminant-of-Drinfeld-module-phi-Y-boldomega}
\begin{equation}
	\overline{\pi}^{q-1} = \Delta^{A}(1) = \pi^{k} \varepsilon^{A} U_{A}^{q-1},
\end{equation}
where \(k = (1-q) Z_{A}'(1) = {-}q\), \(\varepsilon^{A} = {-}1\) and 
\(U_{A} = \lim_{N \to \infty} \sideset{}{^{\prime}} \prod_{\substack{a \in A \\ \deg a < N}} \langle a \rangle\), \(\langle a \rangle = \sgn(a)^{{-}1}T^{{-}\deg a}d\),
which agrees with the formulas in \cite{Gekeler86} IV 4.10 or \cite{Gekeler88} 4.11. Similarly, for the division points,
\begin{equation}
	\begin{split}
	\log d_{u}^{A}(\overline{\pi}) 	&= \log \overline{\pi} + \log d_{u}^{A}(1) \\	
									&= \frac{q}{q-1} + ( Z_{u,A}'(1) - Z_{A}'(1)) = Z_{u,A}'(1) = \frac{q}{q-1} + d - \deg n,
	\end{split}
\end{equation}
in accordance with the values given, e.g., in \cite{GalovichRosen81} or \cite{Gekeler86} IV 4.13.

\subsection{} Now we consider the case where \fbox{\(r = 2\)}. We restrict to demonstrate how \(\lvert \Delta(\boldsymbol{\omega})\rvert\) and 
\(\lvert d_{\mathbf{u}}^{Y}(\boldsymbol{\omega})\rvert\) or rather their logarithms for \(Y = A^{2}\) may be computed from our general results.
For this we assume that \(\boldsymbol{\omega} \in \Psi^{2}\) actually belongs to \(\Omega^{2} = \{ (\omega_{1}, \omega_{2}) \in \Psi^{2} \mid \omega_{2} = 1\}\), i.e.,
that \(\boldsymbol{\omega} = (\omega, 1)\) with \(\omega \in C_{\infty} \smallsetminus K_{\infty}\). This is the usual framework of Drinfeld modular forms
of rank 2 for the group \(\Gamma = \GL(2,A)\) as described, e.g., in \cite{Gekeler88}. Moreover, we assume that \(\omega\) belongs to the fundamental domain \(\mathbf{F}\) for \(\Gamma\),
\begin{equation}
	\mathbf{F} = \{ \omega \in C_{\infty} \smallsetminus K_{\infty} \mid \lvert \omega \rvert = \lvert \omega \rvert_{i} \geq 1\},
\end{equation}
where \(\lvert \omega_{i} \rvert_{i} \defeq \inf_{x \in K_{\infty}} \lvert \omega -x \rvert\). Now the logarithms of the invertible functions \(\Delta(\omega)\)
and \(d_{\mathbf{u}}^{Y}(\omega)\) depend only on \(\log \omega\) and interpolate linearly from integer values of \(\log \omega\). (This has been known for long time 
by results of van der Put, and has been generalized to higher ranks in \cite{Gekeler22} Theorems 2.4 and 2.6.) Therefore we may assume that
\begin{equation}\label{Eq.Definition-of-bold-F-ell}
	\omega \in \mathbf{F}_{\ell} \defeq \{ \omega \in C_{\infty} \smallsetminus K_{\infty} \mid \lvert \omega \rvert = \lvert \omega \rvert_{i} = q^{\ell} \}
\end{equation}
with \(\ell \in \mathds{N}_{0} = \{0,1,2,\dots\}\). Let \(\Lambda\) be the lattice \(Y_{\boldsymbol{\omega}} = A\omega + A \subset C_{\infty}\). Due to
\eqref{Eq.Definition-of-bold-F-ell},
\begin{equation}
	Z_{\Lambda}(S) = (q-1) \frac{1 - (qS)^{\ell}}{1 - qS} + (q^{2} - 1) \frac{(qS)^{\ell}}{1 - q^{2}S}.
\end{equation}
Let \(0 \neq \mathbf{u} = n^{-1}(a_{1}, a_{2}) \in n^{-1}Y\), \(a_{1}, a_{2}, n \in A\), \(d_{1} = \deg a_{1}\), \(d_{2} = \deg a_{2}\), \(d_{1}, d_{2} < \deg n\),
\(u \defeq \mathbf{u}\boldsymbol{\omega} = n^{-1}(a_{1}\omega + a_{2})\). Then
\begin{equation}\label{Eq.Formulas-for-log-u}
	\log u = \begin{cases} \ell + d_{1} - \deg n,	&\text{if } d_{2} \leq d_{1} + \ell, \\ d_{2} - \deg n,	&\text{if } d_{2} > d_{1} + \ell \end{cases}
\end{equation}
(note that always \(\log u < \ell)\) and therefore
\begin{equation}
	Z_{u, \Lambda}(S) = \begin{cases} Z_{\Lambda}(S) + (qS)^{\log u} - (q-1) \sum_{0 \leq i < \log u} (qS)^{i},	&\text{if } \log u \geq 0, \\ Z_{\Lambda}(S) + S^{\log u},	&\text{if } \log u < 0. \end{cases}
\end{equation}
We find
\begin{equation}
	Z_{\Lambda}'(1) = \frac{q^{2} + q - q^{\ell + 1}}{q^{2} - 1}
\end{equation}
and
\[
	Z_{u, \Lambda}'(1) - Z_{\Lambda}'(1)	 = \frac{q}{q-1} (q^{\log u} - 1) = \begin{cases} \frac{q}{q-1}( q^{\ell + d_{1} - \deg n} - 1),	&\text{if } \log u \geq 0, \\ \log u,	&\text{if } \log u < 0. \end{cases}
\]
Finally, from Theorem \ref{Theorem.Main-Theorem} and its corollaries,
\begin{align}
	\log \Delta(\omega)				&= q^{2} + q - q^{\ell + 1} \label{Eq.Formula-log-Delta-omega}
	\intertext{and}
	\log d_{\mathbf{u}}^{Y}(\omega)	&= \begin{cases} \frac{q}{q-1}(q^{\ell + d_{1} - \deg n} - 1),	&\text{if } \log u \geq 0, \\ \log u,	&\text{if } \log u < 0. \end{cases} \label{Eq.Formula-log-d-boldu-Y-of-omega}
\end{align}
for \(\omega \in \mathbf{F}_{\ell}\), \(u = n^{.1}(a_{1}\omega + a_{2})\).

The first of these, \eqref{Eq.Formula-log-Delta-omega}, has been found in \cite{Gekeler97} Theorem 2.13, using a difficult argument involving the van der Put transform of \(\Delta\)
and its Fourier coefficients. To the author's best knowledge, \eqref{Eq.Formula-log-d-boldu-Y-of-omega}, although known to him for quite some time, is so far nowhere
published.

If \(a_{1} = 0\), then always \(\log u = d_{2} - \deg n < 0\), and \(\log d_{\mathbf{u}}^{Y}(\omega) = d_{2} - \deg n\) is constant along \(\mathbf{F}_{0}\), 
\(\mathbf{F}_{1}\), \(\mathbf{F}_{2}\), \dots If \(a_{1} \neq 0\) then for \(\ell \geq d_{2} - d_{1}\), the first case of \eqref{Eq.Formulas-for-log-u} prevails,
and if moreover \(\ell \geq \deg n - d_{1}\), then \(\log d_{\mathbf{u}}^{Y}(\omega)\) is given by the first formula of \eqref{Eq.Formula-log-d-boldu-Y-of-omega},
and grows very fast with \(\ell\).

\subsection{} We conclude this set of examples with the case \fbox{\(r=3\)}, where we restrict to give formulas for \(\log \Delta(\boldsymbol{\omega})\). The situation,
although considerably more complex, is analogous with the just considered case \(r = 2\). Again we assume that \(\boldsymbol{\omega} = (\omega_{1}, \omega_{2}, \omega_{3})\)
lies in the fundamental domain \(\mathbf{F} = \mathbf{F}^{3}\) of \(\Gamma = \GL(3,A)\). That is
\begin{equation}
	\omega_{1}, \omega_{2} \text{ and } \omega_{3} = 1 \text{ are orthogonal (see \ref{Subsubsection.Being.uqp-d-is-stable}) and } \lvert \omega_{1} \rvert \geq \lvert \omega_{2} \rvert \geq 1.
\end{equation}
Moreover, still because of \cite{Gekeler22} Theorems 2.4 and 2.6, we assume that \(\boldsymbol{\omega}\) lies above a vertex of the Bruhat-Tits building, which here means that
\[
	a \defeq \log \omega_{1} \quad \text{and} \quad b \defeq \log \omega_{2}
\]
are integers. For such \(\boldsymbol{\omega}\), let \(\Lambda = Y_{\boldsymbol{\omega}} = A \omega_{1} \oplus A\omega_{2} + A \subset C_{\infty}\). We distinguish the
cases 
\begin{equation}
	(1)~a=b=0; \qquad (2)~ a>b=0; \qquad (3)~a=b>0; \qquad (4)~a>b>0.
\end{equation}
\textbf{Case} (1) \fbox{\(a = b = 0\)} Here the \(Z\)-function is \(Z_{\Lambda}(S) = (q^{3}-1)/(1-q^{3}S)\), which gives
\begin{equation}
	\log \Delta(\boldsymbol{\omega}) = q^{3}.
\end{equation}
This is well-known and could be seen by bare eye.

\textbf{Case} (2) \fbox{\(a > b = 0\)} Here for \(N \in \mathds{N}_{0}\),
\[
	\dim_{\mathds{F}} \Lambda_{N} = 2(N+1) \text{ if } N < a \quad \text{and} \quad 3(N+1) - a \text{ if } N \geq a.
\]
This implies
\[
	Z_{\Lambda}(S) = (q^{2} - 1) \frac{(q^{2}S)^{a} - 1}{q^{2}S-1} + (q^{3}-1) \frac{(q^{2}S)^{a}}{1 - q^{3}S}
\]
and we find
\begin{equation}
	\log \Delta(\boldsymbol{\omega}) = (q^{3}-1) Z_{\Lambda}'(1) = q^{2a+3} - (q^{3}-1)q^{2} \frac{q^{2a}-1}{q^{2}-1}.
\end{equation}

\textbf{Case} (3) \fbox{\(a = b > 0\)} We have for \(N \in \mathds{N}_{0}\)
\[
	\dim_{\mathds{F}} \Lambda_{N} = N+1 \text{ if } N<a \quad \text{and} \quad 3(N+1) - 2a \text{ if } N \geq a,
\]
and so
\[
	Z_{\Lambda}(S) = (q-1) \frac{(qS)^{a} - 1}{qS -1} + (q^{3}-1) \frac{(qS)^{a}}{1-q^{3}S},
\]
which gives
\begin{equation}
	\log \Delta(\boldsymbol{\omega}) = (q^{3}-1) Z_{\Lambda}'(1) = q^{3} - (q^{a}-1)(q^{2}+q).
\end{equation}

\textbf{Case} (4) \fbox{\(a>b>0\)} The dimension of \(\Lambda_{N}\) is given by \(\dim \Lambda_{N} = N+1\), \(2(N+1) - b\), \(3(N+1) - a-b\) if \(N < b\),
\(b \leq N < a\), \(N \geq a\), respectively. We obtain
\[
	Z_{\Lambda}(S) = (q-1) \frac{(qS)^{b}-1}{qS-1} + (q^{2}-1)(qS)^{b} \frac{(qS)^{a-b}-1}{q^{2}S-1} + (q^{3}-1)q^{a-b} \frac{(qS)^{a}}{1-q^{3}S}
\]
and thus
\begin{equation}
	\log \Delta(\boldsymbol{\omega}) = (q^{3}-1) Z_{\Lambda}'(1)
\end{equation}
with the above \(Z_{\Lambda}(S)\) (we omit to write it out).

\subsection{} A few of the values of \(\log \Delta(\boldsymbol{\omega})\) for \(\boldsymbol{\omega} \in \mathbf{F} = \mathbf{F}^{3}\) had already been
calculated, with great pain, in \cite{Gekeler17} (combine Figure 3 with Theorem 4.13 loc. cit.) in form of its increments on the Bruhat-Tits building. Luckily, these
agree with the values obtained above!

It is an exercise in intelligent notation to write down a general formula for \(\log \Delta(\boldsymbol{\omega})\), where \(\boldsymbol{\omega} \in \mathbf{F}^{r}\) lies
above a vertex in the fundamental domain of the Bruhat-Tits building \(\mathcal{BT}^{r}\), or to write a computer program. Similarly, it isn't but a matter of patience
to work out \(Z_{u, \Lambda}(S)\) for \(u = \mathbf{u}\boldsymbol{\omega}\) and \(\mathbf{u} \in K^{r}/A^{r}\), where \(r \geq 2\) is general, and thereby to get closed
expressions for \(\log d_{\mathbf{u}}^{A^{r}}(\boldsymbol{\omega})\) in the style of \eqref{Eq.Formula-log-d-boldu-Y-of-omega}.

\printbibliography

\end{document}